\numberwithin{equation}{section}
\theoremstyle{plain}
\newtheorem{theorem}{Theorem}[section]
\newtheorem{definition}{Definition}[section]
\newtheorem{corollary}{Corollary}[section]
\newtheorem{lemma}{Lemma}[section]
\newtheorem{mainres}{Main Result}
\let \hat \widehat
\newcommand{\scbf}[1]{\textbf{\textsc{#1}}}
\newcommand{\madgq}{MAD$_{\rm GQ}$}
\newcommand{\madgqbf}{{\bf MAD}$_{\bf GQ}$}
\newcommand{\madgqlasso}{MAD$_{\rm GQlasso}$}
\newcommand{\madgqlassobf}{{\bf MAD}$_{\bf GQlasso}$}
\newcommand{\vertiii}[1]{{\vert\kern-0.25ex\vert\kern-0.25ex\vert #1 
    \vert\kern-0.25ex\vert\kern-0.25ex\vert}}
\newcommand{\less}{\vspace{-2mm}}
\newcommand{\more}{\vspace{2mm}}
\begin{document}

\begin{frontmatter}

\title{\normalsize { GRAPH QUILTING: GRAPHICAL MODEL SELECTION\\ FROM PARTIALLY OBSERVED COVARIANCES}}\runtitle{Graph Quilting}

\begin{aug}
\author[A]{\fnms{Giuseppe}~\snm{Vinci}\ead[label=e1]{gvinci@nd.edu}},
\author[B]{\fnms{Gautam}~\snm{Dasarathy}\ead[label=e2]{gautamd@asu.edu}}
\and
\author[C]{\fnms{Genevera I.}~\snm{Allen}\ead[label=e3]{gallen@rice.edu}}

\runauthor{Vinci, Dasarathy, and Allen}

\address[A]{\scriptsize \sl Department of Applied and Computational Mathematics and Statistics, University of Notre Dame\printead[presep={,\ }]{e1}}

\address[B]{\scriptsize \sl Department of Electrical, Computer and Energy Engineering, Arizona State University\printead[presep={,\ }]{e2}}

\address[C]{\scriptsize \sl Department of Electrical and Computer Engineering, Rice University\printead[presep={,\ }]{e3}}
\end{aug}

\begin{abstract}
Graphical model selection is a seemingly impossible task when many pairs of variables are never jointly observed; this requires inference of conditional dependencies with no observations of corresponding marginal dependencies. This under-explored statistical problem arises in neuroimaging, for example, when different partially overlapping subsets of neurons are recorded in non-simultaneous sessions.  We call this statistical challenge the ``Graph Quilting'' problem.  We study this problem in the context of sparse inverse covariance learning, and focus on Gaussian graphical models where we show that missing parts of the covariance matrix yields an unidentifiable precision matrix specifying the graph.  Nonetheless, we show that, under mild conditions, it is possible to correctly identify edges connecting the observed pairs of nodes.  Additionally, we show that we can recover a minimal superset of edges connecting variables that are never jointly observed.  Thus, one can infer conditional relationships even when marginal relationships are unobserved, a surprising result!  To accomplish this, we propose an $\ell_1$-regularized partially observed likelihood-based graph estimator and provide performance guarantees in population and in high-dimensional finite-sample settings. We illustrate our approach using synthetic data, as well as for learning functional neural connectivity from calcium imaging data.
\end{abstract}

%\begin{keyword}[class=MSC]
%\kwd[Primary ]{62H22}
%\kwd[; secondary ]{60K35}
%\end{keyword}

\begin{keyword}
\kwd{Gaussian graphical model}
\kwd{graph selection}
\kwd{high-dimensions}
\kwd{latent variable graphical model}
\kwd{matrix completion}
\kwd{missing data}
\kwd{neuroscience}
\kwd{non-simultaneous measurements}
\kwd{Schur complement}
\end{keyword}

\end{frontmatter}

\section{Introduction}\label{sec:intro}\less
Probabilistic graphical models have been widely used as a computationally effective and statistically sound depiction of the dependence structure of large numbers of random variables \cite{lauritzen1996graphical}.  These have been applied in neuroscience \cite{vinci2018adjusted,vinci2018adjustedB,yatsenko2015improved}, 
genomics \cite{allen2013local, dobra2004sparse, 
gallopin2013hierarchical, kramer2009regularized, yin2011sparse}, finance \cite{carvalho2007dynamic}, physics \cite{pelizzola2005cluster}, and national security, among many others. In a graphical model, the dependence structure of $p$ variables $X_1,...,X_p$ is encoded by a graph $G=(V,E)$, where the vertices or nodes $V=\{1,...,p\}$ represent the variables, and edges in $E$ connect nodes to reflect conditional dependence relationships.  Given data with $n$ samples, $X^{(1)},...,X^{(n)}$, we are interested in learning the graphical model structure, sometimes called structural learning or graph selection \cite{drton2017structure}, in possibly high-dimensional settings where $p>n$. There has been an abundance of work on this problem \cite{banerjee2015bayesian,ravikumar2011high,rothman2008sparse,yuan2010high,yuan2007model}, but we study a new version of the graph selection problem that arises when many pairs of variables are never jointly observed.

\begin{figure}[t!]
\center
\includegraphics[width=1\textwidth]{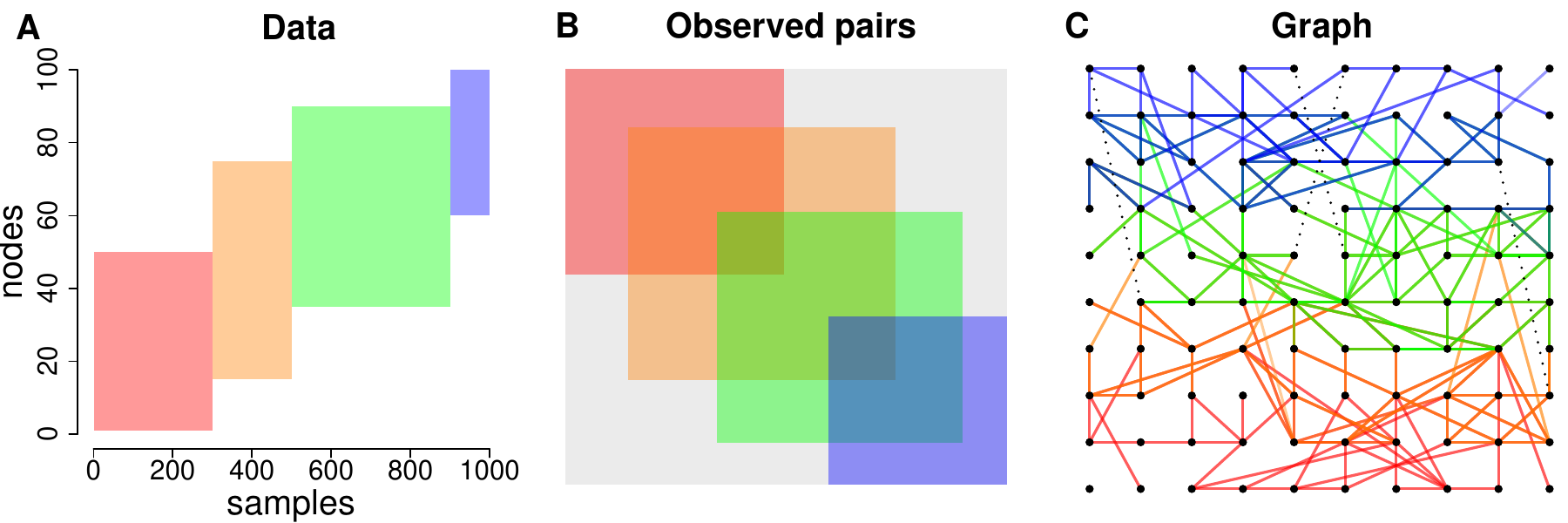}
\caption{
{\bf (A)} Four different subsets of nodes are observed across a total of $1000$ samples. {\bf (B)} Observed variable pairs. Gray entries are never jointly observed. {\bf (C)} Graph with edges colored in correspondence to the observed pairs of nodes; dotted edges connect pairs of nodes that are never observed jointly.  The objective of Graph Quilting is to recover not only the edges connecting jointly observed nodes (colored edges) but also those connecting nodes never jointly observed (dotted edges). 
}
\label{fig:intro}
\end{figure}

\subsection{The Graph Quilting problem}\label{sec:introGQproblem}\less Suppose now that the vectors $X^{(1)},...,X^{(n)}$ are not fully observed in such a manner that several pairs of the $p$ variables are never observed jointly across the all $n$ samples. For instance, Figure~\ref{fig:intro} illustrates the case where four different subsets $V_1, V_2, V_3, V_4\subset V$ of $p=100$ nodes are observed across a total of $n=1000$ data points but, although $\cup_kV_k=V$, many pairs of nodes are never observed jointly. Indeed, we only observe pairs in the set $O=\cup_kV_k\times V_k\subset V\times V$. These circumstances generate a fundamental problem: we have no empirical evidence about the marginal dependence between any pair of variables in $O^c$. This situation brings us to pose several questions: Can we learn the structure of a graphical model for the pairs of observed variables in $O$ when the covariance is not fully observed?  Even more challenging, can we learn the graphical model structure for pairs of variables that are never jointly observed in $O^c$?  In other words, is it possible to infer conditional dependence with no knowledge of marginal dependence? We call this challenging situation the ``Graph Quilting'' problem. This name is evocative of the implicit task of recovering or estimating the graph by ``quilting'' together multiple graphical structures relative to the observed subsets of nodes. In this paper, we illustrate why the Graph Quilting problem is so challenging and also prove surprisingly strong results about graph recovery.

\subsection{Graph Quilting in applied contexts}\label{sec:appliedcontexts}\less
The Graph Quilting problem is not only a theoretical conundrum. It arises in several applied contexts where variables are not simultaneously measured or there is extreme or structural missingness.   Such areas include biomedicine, communications, chemistry, material science, medical records, national security, and finance.  But to further illustrate how the Graph Quilting problem arises in practice, we highlight two examples from biomedicine: neuroscience and genomics.  

\paragraph*{Neuroscience} 
Functional connectivity is the statistical dependence of neurons' activities, and can be represented by a graph estimated from data of {\it in vivo} simultaneously recorded neurons. Studying functional connectivity helps us understand how neurons interact with one another while they process information under different stimuli and other experimental conditions \cite{vinci2016separating, vinci2018adjusted, vinci2018adjustedB, yatsenko2015improved}. New ambitious neuroscience projects involve recording the activities of tens-to-hundreds of thousands of neurons in three-dimensional portions of brain via calcium imaging technology \cite{bae2021functional}. A fundamental trade-off between temporal and spatial resolution characterizes this technology: the more neurons we aim to record from simultaneously, the coarser the time resolution. Since important neuronal activity patterns happen on very short time scales, it is often preferred to record the activities of a subset of neurons at once with a fine temporal resolution rather than recording the activities of the entire neuronal population simultaneously with a coarse time resolution. Yet, if these subsets are recorded {\it nonsimultaneously}, only a subset $O$ of all neuronal pairs may have joint observations, while the rest $O^c$ remain unobserved.  But, scientists are interested in learning the functional connectivity patterns of all neurons, not just those in $O$; this motivates our Graph Quilting problem.

\paragraph*{Genomics}
There has been increasing interest in using {\it single cell RNA-sequencing} (scRNAseq) to measure gene expression levels of each individual cell, allowing scientists to study the genomic changes that lead to cell-specific functions or dysfunctions. However, the data quality of scRNAseq is much poorer than that of bulk RNAseq due to {\it dropouts} or {\it dropdowns}, a technical artifact where genes appear to have zero expression because the scRNAseq technology can only capture a small fraction of the transcriptome of each cell \cite{chen2018scrmd,gong2018drimpute,huang2018saver,jeong2020prime,kolodziejczyk2015technology,tracy2019rescue,zhu2018unified}.  Thus, many genes, and especially those with lower expression levels, are missed by this technology.  Possible missingness is often so extreme in scRNAseq, that many genes pairs have no joint non-zero measurements \cite{gan2022correlation}.  Hence, estimating a gene co-expression network from these data leads to our Graph Quilting problem.

\subsection{Related literature}\less
We identify three existing lines of research that are related to our Graph Quilting problem. First, several approaches have been proposed to deal with covariance and graph estimation in situations affected by missing data \cite{dasarathy2019gaussian,dasarathy2016active, kolar2012estimating, lauritzen1995algorithm, loh2011high, stadler2012missing}. These methods assume that all variable pairs have been simultaneously measured on at least a subset of the observations or that the missingness is random in such a way that all pairs of variables are observed jointly at least once with high-probability, i.e. $O=V\times V$. Unfortunately, our Graph Quilting problem is characterized by an amount of non-random structural missingness such that $O \subset V \times V$.  Thus, the approaches proposed in this line of research are not applicable to the Graph Quilting problem. 

Since the Graph Quilting problem is characterized by structural graph learning from an incomplete covariance matrix, one may suggest that covariance completion methods offer a possible solution.  In fact, there has been much work in this area focusing on positive definite matrix completions \cite{bakonyi1995maximum, dempster1972covariance, grone1984positive, laurent2009matrix, vandenberghe1998determinant}, positive semi-definite or low-rank matrix completions \cite{bhargava2017active, bishop2014deterministic, candes2010matrix, candes2009exact, pfau2013robust, xu2016new}, and some specific statistical models for covariance completion of neural data \cite{turaga2013inferring,wohrer2010linear}.  Yet, this literature on covariance completion has focused on accuracy of the covariance estimate and not the accuracy of the precision matrix or recovery of the graphical model structure, the focus of this paper.  Nevertheless, covariance completion could yield a possible solution to the Graph Quilting problem; we discuss this possibility in Section 2, but choose to pursue a more direct approach to Graph Quilting in this paper.

Finally, many may note that our Graph Quilting problem is closely related to the latent variable graphical model (LVGM) problem introduced in \cite{chandrasekaran2012}, which seeks to learn the graph structure of the observed set of nodes in the presence of latent or hidden nodes.  There has been much interest and work on this important problem \cite{chandrasekaran2012, meng2014learning, wang2020learning, xu2017speeding, yatsenko2015improved}.  One can view our Graph Quilting problem as a composite latent variable graphical model problem where the variables unobserved from each set $V_k$ are treated as latent variables.  Yet, our Graph Quilting problem differs from and is more challenging than that of the LVGM problem in key ways.  First, LVGM is interested only in graph recovery amongst the observed nodes.  In our Graph Quilting problem, we seek to recover the graph amongst the observed pairs of nodes, $O$, but also amongst the unobserved pairs of nodes, $O^c$, a much harder problem.  Secondly, our Graph Quilting problem permits a fully general and arbitrary set of jointly observed variables, $O$, compared to the LVGM problem which assumes $O$ is a Cartesian product, $O = V_k \times V_k$. Importantly, our approach allows, but does not require, overlapping sets of observations. Finally, in this paper, we focus on Graph Quilting where we assume that we have observed each of the variables at least once, $\cup_k V_k = V$, which differs from the LVGM problem where many variables are hidden and never observed.  But as discussed briefly in Section 3, we show that our Graph Quilting approach and theory extends to the case where $\cup_k V_k \subset V$ and hence the LVGM problem as well.

\subsection{Main contributions}\less
In this paper, we focus on the Graph Quilting problem for structural recovery in the Gaussian graphical model \cite{lauritzen1996graphical, ravikumar2011high, yuan2007model}, although our methods and theory are suitable for general sparse inverse covariance learning.  Here, $X = (X_1,...,X_p)^T \sim N(\mu,\Sigma)$, 
with mean vector $\mu$, and $p\times p$ positive definite precision matrix $\Theta=\Sigma^{-1}$ with the property $\Theta_{ij}= 0$ $\Leftrightarrow$ $(i,j)\notin E$ $ \Leftrightarrow$ $X_i\perp X_j\mid \{X_k\}_{k\in V\setminus \{i,j\}}$, denoting conditional independence. 
In this framework, we define the Graph Quilting problem as the problem of estimation of $\Theta$ and $G$ from an incomplete set of empirical covariances $\hat\Sigma_O=(\hat\Sigma_{ij})_{(i,j)\in O}$.  We briefly summarize our main contributions for this problem. 

\paragraph*{The challenges of Graph Quilting} The fundamental question is whether it is even possible to recover the graph $G$, and the precision matrix $\Theta$, from  a subset of the true covariance matrix, $\Sigma_O$. This is an underdetermined system of observations, and it is therefore unsurprising that this is not possible in general. However, such issues are routinely handled in  modern high-dimensional statistics research by effectively constraining the search space to low-complexity models such as sparse vectors or low-rank matrices. We show that the situation is significantly more challenging for the Graph Quilting problem:

\begin{mainres}[\scbf{Graph Identifiability}]
$G$ is identifiable from $\Sigma_O$ alone if and only if $E\subseteq O$, even if the cardinality of $E$ is known.
\end{mainres}

This result shows that the Graph Quilting problem is generally impossible, even if we know the true sparsity, or number of edges, of the graph!  Conditions for graph recovery suggest that to recover edges, we must jointly observe all of the pairs of variables connected via an edge, a completely unrealistic assumption that practically implies prior knowledge of the graph and hence negates the need for graph structural learning. This result also shows that standard approaches to solving underdetermined problems, such as assuming sparsity, will also fail; clearly one needs to leverage more structure. In this paper, we propose a natural set of assumptions that will allow us to break this information-theoretic barrier, and take the first steps toward tackling this important problem. 

\paragraph*{Graph Quilting Model Selection}
Based on the above result and observations, we show that imposing the condition $\Theta_{O^c}=0$ (even when this is not true) lets us obtain an approximation that is sufficiently close to $\Theta$ so as to recover $G$ in a wide variety of situations. Moreover, we can do this in a computationally efficient manner via a convex program.  Specifically, to recover $\Theta$ and $G$ given an incomplete empirical covariance matrix $\hat\Sigma_O$, we propose the \madgqlassobf{} (MAximum Determinant$_{\bf GQlasso}$), an $\ell_1$ regularized estimator given by
\[\hat{\tilde\Theta} ~=~ \underset{\Theta\succ 0,\Theta_{O^c}=0}{\arg\max} ~\log\det\Theta-\sum_{(i,j)\in O}\Theta_{ij}\hat\Sigma_{ij}-\Vert\Lambda\odot\Theta\Vert_{1,{\rm off}},\]
where $\Lambda$ is a matrix of nonnegative penalties, $\Vert M\Vert_{1,{\rm off}}=\sum_{i\neq j}|M_{ij}|$, and the constraint $\Theta_{O^c}=0$ rules out the dependence of the likelihood function on the unobserved empirical covariances $\hat\Sigma_{O^c}$. We prove the following main results about the graph structural learning of this estimator:
\begin{mainres}[\scbf{Graph recovery in $O$}] Under appropriate conditions, $\exists \tau>0$ such that the graph estimate $\hat E = \big\{(i,j):i\neq j, |\hat{\tilde\Theta}_{ij}|>\tau\big\}$ satisfies $\hat{E}_O = E_O$ with high probability.
\end{mainres}
In other words, hard thresholding the \madgqlassobf{} estimator yields consistent graph selection in $O$.  The next natural question, and the much more challenging problem, is whether we can recover the graph structure in $O^c$. Recall that the graph in $O^c$ is not identifiabile, so the best we can hope to achieve is a minimal superset of edges in $O^c$ that cover all possible graph structures consistent with $\hat{\Sigma}_{O}$.  Toward this end, we devise a scheme using Schur complements and hard thresholding to detect all potential graph structures in $O^c$:
\begin{mainres}[\scbf{Graph recovery in $O^c$}] Under appropriate conditions, our approach (Algorithm~\ref{algo:fullrecoveryKn}) recovers a set $\hat{\mathcal{S}}$ that is guaranteed to be a superset of the edges in $O^c$, $\hat{\mathcal{S}}~\supseteq ~E_{O^c}$, with high probability. Under additional assumptions, we show that this is the minimal possible superset achievable.
\end{mainres}
This surprisingly strong result demonstrates that it is indeed possible to recover some conditional dependence and independence relationships for pairs of variables that are never jointly observed and for which we have no measurement of marginal dependence.

\paragraph*{Organization}
We define our Graph Quilting problem, study graph identifiability, discuss possible solutions, and introduce our Maximum Determinant Graph Quilting approach in Section~\ref{sec:gqprobl}.  In Section~\ref{sec:popK} we study the Graph Quilting problem and our approach in the population setting, and in Section~\ref{sec:estimation}, we additionally leverage results from high-dimensional graph structural recovery to prove graph selection consistency for our problem in finite samples with high probability.  We illustrate the properties of our graph estimator in simulations in Section~\ref{sec:simulations} and through the analysis of calcium imaging data in Section~\ref{sec:data}.

\section{Characterization of the Graph Quilting Problem}\label{sec:gqprobl}\less
We formally define the Graph Quilting problem for Gaussian Graphical models and sparse inverse covariance learning.  We discuss why this is challenging through an unidentifiability result, and introduce how we solve this problem by proposing an estimator that we will study in the remainder of the paper.

\subsection{Graph Quilting problem}\label{sec:gqproblemsimpleintro}\less
Let $X=(X_1,...,X_p)^T\sim N(\mu,\Sigma)$, where $\Theta = \Sigma^{-1}\succ 0 $ is a $p \times p$ positive definite precision matrix which encodes the conditional dependence graph $G$ with edge set $E=\{(i,j) :i\neq j, \Theta_{ij}\neq 0\}$. The parameter $\Theta$ and thereby the graph $G$ are typically estimated via penalized likelihood maximization based on the sample covariance matrix $\hat \Sigma$ computed from $n$ fully observed data vectors $X^{(1)},...,X^{(n)}\stackrel{\rm i.i.d.}{\sim}N(\mu,\Sigma)$.

As we previously motivated, there are many situations in which we observe incomplete data in such a way that a complete estimate of the sample covariance matrix is no longer available. For example, suppose we observe multiple datasets $\mathbf{X}_1,...,\mathbf{X}_K $, with a pattern similar to Figure~\ref{fig:intro}A, where $\mathbf{X}_k\in\mathbb{R}^{n_k\times |V_k|}$ contains $n_k>1$ samples of vectors of nodes $V_k\subset V$, and $\bigcup_k V_k = V$. The set of jointly observed pairs of nodes across the available samples is given by $O=\bigcup_{k=1}^K V_k\times V_k$, and if $O^c\neq\emptyset$, then we can only obtain an incomplete sample covariance matrix $\hat\Sigma_O = (\hat\Sigma_{ij})_{(i,j)\in O}$, where each entry $\hat\Sigma_{ij}$ is computed using all available joint observations $(X_i^{(r)},X_j^{(r)})$ across the available samples (Definition~\ref{def:obscov}, Section~\ref{sec:estimator}).

The situation described above brings us to pose several questions. Can we infer conditional dependence with no knowledge of marginal dependence? In particular, can we estimate $\Theta$ and/or $G$ from an incomplete empirical covariance matrix $\hat\Sigma_O$? Is it possible to recover both $\Theta_O$ and the more challenging case of $\Theta_{O^c}$?  We call this challenging situation the ``Graph Quilting problem'', a name that evokes the implicit task of recovering the graph by {\it quilting} together multiple graphical structures relative to the observed subsets of nodes.

\subsection{Non-Identifiability: the challenges of Graph Quilting}\label{sec:identifiability}\less

Recovering the full conditional dependence graph of all $p$ nodes given partially observed covariances is extremely challenging because it requires one to infer a multiplicity of conditional dependence statements even for unobserved node pairs. It is certainly possible to estimate a graph for any node subset $V_k\subset V$ for which all pairs have been observed, but such graph would represent the dependence structure of those nodes unconditionally on the others. Indeed, for any set $A\subset V$, the Schur complement gives $\Sigma_{AA}^{-1} = \Theta_{AA}-\Theta_{AA^c}\Theta_{A^cA^c}^{-1}\Theta_{A^cA} $, so in general $\Sigma_{AA}^{-1}\neq \Theta_{AA}$. Moreover, such approach would not yield any recovery of the graph in $O^c$. Alternatively, we could attempt to approximate $\hat\Sigma_{O^c}$ to obtain a full covariance matrix. But, how can we do so in a manner that would allow us to correctly recover the inverse covariance matrix and corresponding graphical structure? 

Similar challenges, where we seek to estimate parameters from an underdetermined set of measurements, are routinely handled in high-dimensional statistics through structural assumptions like sparsity or low-rankness. So, one may suggest to make similar assumptions for our Graph Quilting problem.  Unfortunately, we show in the following result that recovering the graph $G$ from $\Sigma_O$ is impossible even if we know the exact level of sparsity in $G$:
\begin{theorem}[\scbf{Graph Identifiability}]
\label{thm:identifiability}
$G$ is identifiable from $\Sigma_O$ alone if and only if $E\subseteq O$, even if the cardinality of $E$ is known. 
\end{theorem}
This result shows that even if we make the very strong assumption of knowing the true graph level of sparsity, we still cannot identify the graph from $\Sigma_O$ unless $E$ is entirely contained in $O$; this essentially assumes foreknowledge of $E$ and negates the need for graph selection. This result seems like we set out to study an impossible problem.  But as we will establish in Section 3, breaking the problem up to consider recovery in $O$ separately from $O^c$, we show that under certain assumptions, the graph in $O$ is identifiable and while the graph in $O^c$ is not identifiable, a minimal superset of the graph in $O^c$ is identifiable.

\subsection{Our proposed solution}\less
Given the challenges with Graph Quilting, it is clear that we need to impose some additional structure or assumptions to begin to tackle our problem.  One may think of several possible methodological directions in which to proceed; we outline three broad families of approaches here:
\begin{enumerate}[(a)]
\item {\it Observed likelihood methods}, which exploit the log-likelihood function of the observed data, $\ell(\Theta;\mathbf{X}_1,...,\mathbf{X}_K) =$ $\sum_{k=1}^Kn_k\big\{\log\det\Sigma_{V_kV_k}^{-1}-{\rm tr}\big(\Sigma_{V_kV_k}^{-1}\hat\Sigma_{V_kV_k}\big)\big\}$, 
where $\Sigma_{V_kV_k}^{-1}=\Theta_{V_kV_k}-\Theta_{V_kV_k^c}\Theta_{V_k^cV_k^c}^{-1}\Theta_{V_k^cV_k} $ (Schur complement).
\item {\it Two-step methods}, which first perform covariance matrix completion on $\Sigma_O$ or $\hat\Sigma_O$, and then retrieve the precision matrix and associated graph; 
\item {\it Observed covariance methods}, which reconstruct the precision matrix from $\Sigma_O$ or $\hat\Sigma_O$ directly by maximizing the partial log-likelihood function $\ell(\Theta,\hat\Sigma_O)=\log\det\Theta - \sum_{(i,j)\in O}\Theta_{ij}\hat\Sigma_{ij}$ with constraints on $\Theta_{O^c}$. 
\end{enumerate}
In this paper, we pursue the observed covariance approach, (c), but pause to discuss the other options and justify our choice.  Observed likelihood methods, (a), seem the most direct, but upon further inspection it is unclear how to make this computationally or statistically tractable.  The precision matrix is fragmented across $K$ pieces of the observed likelihood function via several linked Schur complements; as $K$ grows and for general observation patterns $O$ that we consider, this approach quickly becomes intractable. Approach (b) similarly raises concerns of tractability and identifiability since the completion of $\Sigma_O$ needs to be done in a manner that constrains the element of this matrix's inverse to be sparse so as to preform graph selection.  Recently, \cite{chang2022low} studied this approach and showed that while covariance completion can approximately estimate the graph in practice, there are many challenges to providing theoretical guarantees on graph identifiability and recovery.  While we do not choose to pursue these approaches in this paper, additional thorough investigation of approaches (a) and (b) are fruitful avenues for future research.  

In this paper, we tackle the Graph Quilting problem by studying observed covariance models, (c), as this approach is naturally computationally tractable, and in the sequel, we show that under appropriate conditions it has favorable statistical characteristics. The optimization problem at the heart of approach (c) may be recast as the following: 
\begin{equation}\label{eq:gqgenprob}
\tilde\Theta =  \underset{\Theta\succ 0,~\Theta_{O^c}\in \mathcal{C}}{\arg\max}~ \log\det\Theta - \sum_{(i,j)\in O}\Theta_{ij}\Sigma_{ij},
\end{equation} 
where the objective function does not depend on the unobserved covariances of the set $O^c$, and $\mathcal{C}$ is a set of admissible values of $\Theta_{O^c}$.  With no appropriate constraint $\Theta_{O^c}\in \mathcal{C}$, the optimization problem would have infinitely many solutions. Hence, our approach is to impose suitable constraints on $\Theta_{O^c}$ that will allow us to recover the graph structure under reasonable assumptions.  
To this end, we focus on a specific instance of Equation~(\ref{eq:gqgenprob}):
\begin{definition}[\madgqbf]\label{def:gq0}
The \madgq{} approximation of $\Theta=\Sigma^{-1}$ given $\Sigma_O$ is
\begin{equation}\label{eq:gq0}
~~~~~~~~\tilde\Theta ~:=~  \underset{\Theta\succ 0, ~\Theta_{O^c}=0}{\arg\max}~ \log\det \Theta - \sum_{(i,j)\in O}\Theta_{ij}\Sigma_{ij}.~~~~~~~~~~(\textsc{MAD}_{\rm GQ})
\end{equation}
\end{definition}
We call the solution in Equation~(\ref{eq:gq0}) ``\madgq{}'' because of its relationship with the {\it maximum determinant} positive definite covariance matrix completion:
\begin{lemma}\label{lemma:maxdetopt}
Equation~(\ref{eq:gq0}) is equivalent to the max-det problem
\begin{equation}\label{eq:maxdet}
\tilde\Theta^{-1}:=~\tilde\Sigma = \underset{S\succ 0, ~S_O=\Sigma_O}{\arg\max} ~\det S,
\end{equation}
which has a unique solution if $\Sigma_O$ is completable to a positive definite matrix. 
\end{lemma}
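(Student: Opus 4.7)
The plan is to show that the KKT systems of the two problems coincide under matrix inversion on the PD cone, so that the unique critical point satisfies $\tilde\Sigma = \tilde\Theta^{-1}$, and then to establish existence and uniqueness by strict log-concavity of $\det$ together with a compactness argument that uses the hypothesis $(i,i)\in O$ for all $i$.

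First I would rewrite the MAD$_{GQ}$ objective: since $\Theta_{O^c}=0$ on the feasible set, $\sum_{(i,j)\in O}\Theta_{ij}\Sigma_{ij}$ coincides with $\operatorname{tr}(\Theta\Sigma^{\star})$ for any symmetric $\Sigma^{\star}$ with $\Sigma^{\star}_O=\Sigma_O$, and the objective is strictly concave in $\Theta$ on the open PD cone. Next I would form Lagrangians for both problems. For Equation~(\ref{eq:gq0}) the multiplier $M$ for the affine constraint $\Theta_{O^c}=0$ is a symmetric matrix supported on $O^c$; stationarity reads $\Theta^{-1}=\Sigma^{\star}+M$, which combined with feasibility yields the KKT system
\[
\Theta_{O^c}=0,\qquad (\Theta^{-1})_O=\Sigma_O.
\]
For Equation~(\ref{eq:maxdet}) the multiplier $N$ for $S_O=\Sigma_O$ is a symmetric matrix supported on $O$; stationarity reads $S^{-1}=N$, yielding the KKT system
\[
S_O=\Sigma_O,\qquad (S^{-1})_{O^c}=0.
\]
Under the bijection $S=\Theta^{-1}$ on the PD cone these two systems are identical, with feasibility of one becoming stationarity of the other. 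Because the constraints are affine and the objectives strictly concave, KKT is necessary and sufficient for optimality, so any solution is unique and the two problems share the common optimum $\tilde\Sigma=\tilde\Theta^{-1}$.

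Existence of $\tilde\Sigma$ is equivalent to nonemptiness of the feasible set of Equation~(\ref{eq:maxdet}), i.e.\ to PD-completability of $\Sigma_O$; the positivity of every fully observed principal minor is a necessary condition since principal submatrices of a PD matrix are PD. Given a feasible $S_0$, set $\alpha:=\sup\log\det S>-\infty$. The hypothesis $(i,i)\in O$ fixes $\operatorname{tr}(S)=\sum_i\Sigma_{ii}$ on the feasible set, and $S\succeq 0$ forces $|S_{ij}|\le\sqrt{\Sigma_{ii}\Sigma_{jj}}$, so the feasible set is bounded. Intersecting it with the closed sublevel set $\{\det S\ge c\}$ for some $0<c<e^{\alpha}$ produces a compact subset of the open PD cone on which $\log\det$ is continuous; the supremum is attained and, by strict log-concavity, unique. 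The main obstacle is precisely this compactness step, since the feasible set is not closed in the PD cone and one must descend to a closed sublevel set of $-\log\det$ before invoking continuity; constraint qualification is automatic for affine equality constraints, so no additional regularity is needed.
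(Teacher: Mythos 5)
Your proposal is correct and the core of it---identifying the stationarity conditions of the two problems ($\Theta_{O^c}=0$ together with $[\Theta^{-1}]_O=\Sigma_O$ on one side, $S_O=\Sigma_O$ together with $[S^{-1}]_{O^c}=0$ on the other) and observing that they coincide under the bijection $S=\Theta^{-1}$ on the positive definite cone---is exactly the argument the paper gives. The only difference is that you go further on existence: the paper simply asserts that the first-order conditions and constraints ``uniquely identify'' the solutions (leaning on the cited completion literature, e.g.\ Grone et al.), whereas you supply a self-contained compactness argument, using the fact that $(i,i)\in O$ for all $i$ bounds the feasible set and that a closed sublevel set of $-\log\det$ keeps you inside the open cone. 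That addition is sound and makes the lemma's existence claim fully rigorous, but it does not change the route.
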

Equation~(\ref{eq:maxdet}) has been investigated as a covariance completion approach \cite{bakonyi1995maximum, dempster1972covariance, grone1984positive} corresponding to the maximum entropy distribution with covariance constraints over the set $O$. Yet, the reliability of the retrieved precision matrix given by $\tilde\Theta$ and the associated edge set $\tilde E$ is completely unexplored. If the assumption $E\subseteq O$ of Theorem~\ref{thm:identifiability} is correct, then the  reconstructed \madgq{} matrix $\tilde\Theta$ matches $\Theta$ exactly, and thereby the graph $G$ is perfectly recovered. If $E\not\subseteq O$, then, in general, $\tilde\Theta\neq \Theta$, so that the graphical structure of $\tilde\Theta$ will not match $G$. Indeed, erroneously assuming that some pairs of nodes are conditionally independent would force the rest of the recovered network to adjust in order to reflect the dependence pathways expressed by $\Sigma_O$. However, a striking property of \madgq{} is the following:
\begin{theorem}[\scbf{No false negatives in $O$}]\label{theo:nofnd}
Let $\tilde E=\{(i,j):i\neq j,\tilde\Theta_{ij}\neq 0\}$ be the edge set induced by the \madgq{} solution $\tilde\Theta$ in Equation~(\ref{eq:gq0}). Then the property $E_O\subseteq \tilde E_O$ holds almost everywhere. 
\end{theorem}
Theorem~\ref{theo:nofnd} establishes that the \madgq{} solution $\tilde\Theta$ induces {\em no false negative edges} in $O$, except for a negligible set of positive definite matrices. More precisely, if we let $\mathcal{M}_E$ be the set of   $p\times p$ positive definite matrices all supported on the graphical structure $E$, then the property $E_O \subseteq \tilde{E}_O$ is only violated on a set that has negligible measure with respect to the Lebesgue measure on $\mathcal{M}_E$. To see this intuitively, suppose that $E_O$ is nonempty  and let $\Delta_{ij}:=\tilde\Theta_{ij}-\Theta_{ij}$. Notice now that having a false negative  $(i,j)\in E_O$ would require $\Theta_{ij}=-\Delta_{ij}$. The set of matrices that exactly satisfy the latter equality constitutes a lower dimensional manifold which occupies zero volume in the set $\mathcal{M}_E$. However, in the next sections, we show that we can go much farther: under additional assumptions we can recover the graph in $O$ exactly. Let us define the smallest edge magnitude in $O$,
\begin{equation}\label{eq:nu}
\nu~:=~\min_{(i,j)\in E_O} |\Theta_{ij}|,
\end{equation}
and the maximum off-diagonal distortion produced by our \madgq{} solution in $O$,
\begin{equation}\label{eq:delta}
\delta~:=~\max_{(i,j)\in O, i\neq j}|\Theta_{ij}-\tilde\Theta_{ij}|.
\end{equation}
We assume that $O$ contains at least one edge, so that $\nu$ exists. Moreover, define
\begin{equation}\label{eq:Etau}
\tilde E^\tau ~:=~ \big\{(i,j):i\neq j, |\tilde\Theta_{ij}|>\tau\big\}
\end{equation}
that is the graph obtained by thresholding the \madgq{} matrix at level $\tau$. The next lemma identifies a sufficient and necessary condition for the recovery of the graph in $O$ via $\tilde E_O^\tau$:
\begin{lemma}[\scbf{Exact graph recovery in $O$}]\label{lemma:exactO}
We have $\tilde E_O^{\tau} = E_O$, and ${\rm sign}(\tilde\Theta_{ij})={\rm sign}(\Theta_{ij})$,  $\forall(i,j)\in E_O$, if and only if  $\delta<\nu/2$ and $\tau\in [\delta,\nu-\delta)$.
\end{lemma}
This lemma states that, as long as the maximum distortion $\delta$ is sufficiently small ($\delta<\nu/2$),  we can recover the set of edges in $O$ and their signs  exactly by simply thresholding the entries of $\tilde\Theta_O$ at any level $\tau\in[\delta,\nu-\delta)$. However, when does the condition $\delta<\nu/2$ hold? It certainly depends on $\Theta$ and $O$, since both the distortion $\delta$ and the minimal magnitude $\nu$ depend on $\Theta$ and $O$. Theorem~\ref{thm:identifiability} guarantees that $\Theta_{O^c}=0$ implies $\delta=0<\nu/2$, and it is reasonable to expect that diverging only slightly from this case should still yield $\delta<\nu/2$. But how far can $\Theta_{O^c}$ diverge from the null case $\Theta_{O^c}=0$? In the context of Graph Quilting, there are other natural questions that present themselves: Can we recover any information about the graph in $O^c$? How do we deal with the finite sample case where $\Sigma_O$ is replaced by an empirical estimate $\hat \Sigma_O$ that is not guaranteed to be completable to a positive definite matrix as required by Lemma~\ref{lemma:maxdetopt}? These questions are the focus of the remainder of this paper.

\section{Graph Recovery: Population Analysis}\label{sec:popK}\less
We begin by investigating the Graph Quilting problem at the population level. 
That is, we assume that we have perfect access to $\Sigma_O$, a portion of the true covariance matrix, where $O\subset V\times V$ is represented as $O=\cup_{k=1}^KV_k\times V_k$, with $V_1,...,V_K\subset V$, $\cup_k V_k=V$, and smallest possible $K$. Our aim is to reconstruct the graph $G$, or the sparsity pattern in $\Theta$. In Sections~\ref{sec:popO} and \ref{sec:popOc}, we investigate the graph recovery in $O$ and $O^c$ separately, and then condense the results into one algorithm in  Section~\ref{sec:popFull}.  Appendix~\ref{app:popcaseK2} contains additional results for the special case $K=2$.
\less

\subsection{Graph Recovery in \texorpdfstring{$O$}{Lg}}\label{sec:popO}\less
Theorem~\ref{theo:nofnd} in Section~\ref{sec:identifiability} guarantees that the \madgq{} solution $\tilde\Theta$ induces no false negative edges in $O$, except on a set of measure zero. Moreover, Lemma~\ref{lemma:exactO} states that if $\delta<\nu/2$ then we can recover the edge set and signs in $O$ exactly by simply thresholding the entries of $\tilde\Theta_O$ at level $\tau\in [\delta,\nu-\delta)$. We now show that if the edges in $O^c$ are sufficiently weak, then $\delta<\nu/2$, so exact graph recovery in $O$ is possible! Specifically, let $\gamma:=\Vert\Theta_{O^c}\Vert_\infty$ be the largest magnitude in $\Theta_{O^c}$. We show that, for a given precision matrix $\Theta$ and observation set $O$, there exists a threshold $\alpha(\Theta,O)>0$ such that $\gamma<\alpha(\Theta,O)$ implies $\delta<\nu/2$. Indeed, if $\gamma\approx 0$, we expect $\Vert\Theta-\tilde\Theta\Vert_\infty\approx 0$ because $\Theta_{O^c}=0$ implies $\tilde\Theta=\Theta$, by the Graph Identifiability Theorem~\ref{thm:identifiability}. For illustration, in Section~\ref{sec:caseK2} we further discuss the results in the more analytically tractable special case $K=2$, where $O=\cup_{k=1}^2V_k\times V_k$,   $V_1\neq V_2$, and $V_1\cup V_2=V$, and we derive explicit expressions of the threshold $\alpha(\Theta,O)$.

To let our main theorem work, we need to define an appropriate class of matrices:
\begin{definition}\label{def:TO}
Let $\mathcal{T}_O := \{\Theta\succ 0 ~:~ A\succ 0, A_O=\Theta_O, A_{O^c}=0\}$ be the set of all $p\times p$ positive definite matrices that would remain positive definite even if their entries in $O^c$ were replaced by zeros.
\end{definition}
The following theorem states our main result for the recovery of $E_O$:
\begin{theorem}[\scbf{Exact graph recovery in $O$}]\label{theo:popO} 
If $\Theta\in\mathcal{T}_O$, then there exists a threshold $\alpha(\Theta,O)>0$ depending on $\Theta$ and $O$ such that, if $\gamma:=\Vert\Theta_{O^c}\Vert_\infty<\alpha(\Theta,O)$, then $\tilde E_O^{\tau} = E_O$ (Equation~(\ref{eq:Etau})) for all $\tau\in[\delta,\nu-\delta)$, and ${\rm sign}(\tilde\Theta_{ij})={\rm sign}(\Theta_{ij})$,  for all $(i,j)\in E_O$.
\end{theorem}
In the proof of Theorem~\ref{theo:popO} in Appendix~\ref{app:proofmains}, we first demonstrate the existence of a continuous function $\bar\delta(\gamma)$ of $\gamma$ that upper-bounds $\delta$ and with value $\bar\delta(0)=0$. Then, since $0\le\delta\le\bar\delta(\gamma)$, we note that $\gamma\to 0^+$ implies $\delta\to 0^+$, guaranteeing  the existence of the positive threshold $\alpha(\Theta,O)$ for which $\gamma<\alpha(\Theta,O)$ implies $\delta<\nu/2$.  Finally, Lemma~\ref{lemma:exactO} is applied. 

To illustrate the theoretical results of this section, in Figure~\ref{fig:poprecovery} we present an example with $p=40$ nodes, $K=5$ node subsets (see Appendix~\ref{app:figdetails} for details), and  $\gamma$ small enough to ensure $\delta<\nu/2$. In Figure~\ref{fig:poprecovery}(A) we show the support of the precision matrix $\Theta$ (black dots) and the set $O$ of observed node pairs (colored regions). Several edges are present in $O^c$. In Figure~\ref{fig:poprecovery}(B) we display the support of the \madgq{} matrix $\tilde\Theta$, which contains several false positives in $O$ (green dots), several false negatives in $O^c$ (red dots), but no false negatives in $O$ as per  Theorem~\ref{theo:nofnd}. Finally, in Figure~\ref{fig:poprecovery}(C) we plot the \madgq{} edge set $\tilde E^{\tau}$ (Equation~(\ref{eq:Etau})) with $\tau=\nu/2\in[\delta,\nu-\delta)$. The set $\tilde E^{\tau}$ perfectly matches the true edge set over $O$ as per Theorem~\ref{theo:popO}, since all false positives had magnitudes smaller than the threshold $\nu/2$. Figures~\ref{fig:poprecovery}(E)-(G) are analogous to Figures~\ref{fig:poprecovery}(A)-(C), but the sets $V_1,...,V_5$ are random subsets of $V$. Figures~\ref{fig:poprecovery}(D) and (H) are about the graph recovery in $O^c$, which is discussed in Section~\ref{sec:popOc}.\less

\begin{figure}[t!]
\center
\includegraphics[width=1\textwidth]{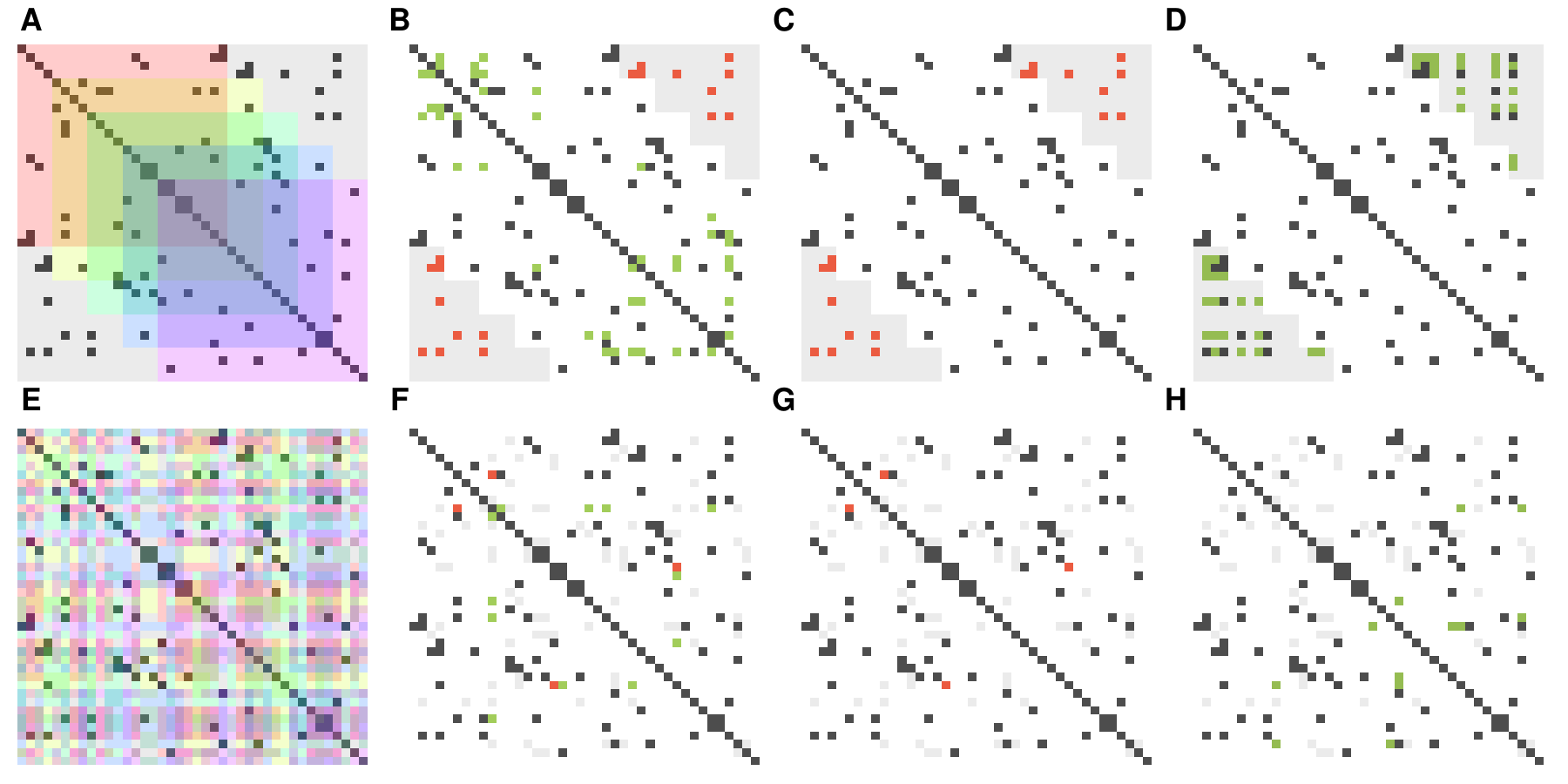}\less\less
\caption{
Example of graph recovery via \madgq{}. 
{\bf (A)} Support of $\Theta$ ($p=40$) and observed node pairs set $O$ (colored area), and $O^c$ (grey area). 
{\bf (B)} Support of the \madgq{} matrix $\tilde\Theta$: the green entries denote false positive edges, whereas the red ones are false negatives. No false negatives are in $O$ as per Theorem~\ref{theo:nofnd}.
{\bf (C)} Recovered graph $\tilde E^{\tau}$ (Equation~(\ref{eq:Etau})) with $\tau=\nu/2\in[\delta,\nu-\delta)$. In this example, the largest magnitude of $\Theta_{O^c}$, $\gamma$, is sufficiently small, so $\tilde E^\tau_O$ perfectly matches the true edge set $E_O$, as per Theorem~\ref{theo:popO}. 
{\bf (D)} Fully recovered edge set $\mathcal{E}^\tau$ via Algorithm~\ref{algo:fullrecoveryK}, consisting of the union of the recovered graph in $O$ as in (C), with the minimal superset of edges in $O^c$. 
{\bf (E)-(H)}. Analogous to (A)-(D), except that the sets $V_1,...,V_5$ are random.
}\label{fig:poprecovery}\less
\end{figure}

\subsubsection{Special case \texorpdfstring{$K=2$}{Lg}}\label{sec:caseK2}\less\less
In this section, we focus on a simple but practically relevant illustrative case where we observe only two vertex subsets $V_1$ and $V_2$. For this case, the \madgq{} optimization problem in Equation~(\ref{eq:gq0}) has a tractable closed-form solution (Equation~(\ref{thetaMD3x3}), Appendix~\ref{app:abcproofs}), which allows us to analyze the Graph Quilting problem in greater detail analytically. For simplicity of exposition, we shall let $V_1=A\cup B$ and $V_2=B\cup C$, where $A,B,C$ is a partition of $V$, so that $O^c=(A\times C)\cup (C\times A)$ and $B$ contains the overlapping vertices between the two observation sets $V_1$ and $V_2$. 
We identify three situations where the condition $\delta<\nu/2$ is satisfied, and we specify them in terms of $\gamma:=\Vert\Theta_{O^c}\Vert_\infty=\Vert\Theta_{AC}\Vert_\infty$:
\begin{enumerate}
    \item[(A1).] $E_{AC}=\emptyset$, i.e. $\gamma=0$.
    \item[(A2).] $B$ is disconnected from $A$ and $C$ and $0<\gamma <\sqrt{\frac{\nu\lambda_{\min}}{2d_{O^c}^2}} $, where $\lambda_{\min}$ is the smallest eigenvalue of $\Theta$, and $d_{O^c}$ is the max node-degree in the sub-graph $E_{AC}$.
    \item[(A3).] $0<\gamma<\frac{-b+\sqrt{b^2+2a\nu}}{2a}$, where $a=d_{O^c}^2(\lambda_{\min}^{-1}+2q^2d_B^2\gamma_B^2\lambda_{\min}^{-3})$, $b=d_{O^c}(d_B\gamma_B\lambda_{\min}^{-1}+2qd_B^2\gamma_B^2\lambda_{\min}^{-2})$, 
 $q=\max\{|A|,|C|\}$, $d_B$ is the largest number of edges from one node in $B$ to $A$ or to $C$, and $\gamma_B=\Vert\Theta_{B(AC)}\Vert_\infty$. 
\end{enumerate}
The following is a corollary of Theorem~\ref{theo:popO} for the case $K=2$:
\begin{corollary}[\scbf{Exact Graph Recovery in $O$ (special case $K=2$)}]\label{coro:popOABC} If Condition (A1) or (A2) or (A3) hold, then $\delta <\nu/2$  and for any $\tau\in [\delta,\nu-\delta)$, we have $\tilde E_O^{\tau} = E_O$ (Equation~(\ref{eq:Etau})), and ${\rm sign}(\tilde\Theta_{ij})={\rm sign}(\Theta_{ij})$,  $\forall(i,j)\in E_O$.
\end{corollary}
Condition (A1) corresponds to the simplest situation depicted by Theorem~\ref{thm:identifiability}, where  $E_{AC}=\emptyset$ guarantees $\tilde\Theta=\Theta$, yielding $\delta=0<\nu/2$. Conversely, conditions (A2) and (A3) exploit several, rather technical, matrix inequalities given in Appendices~\ref{app:matrixineq}
and \ref{app:popcaseK2}, which explicitly relate the magnitude $\gamma$ of the strongest edge in $O^c$ to other quantities  characterizing $\Theta$. 
We can see that the exact graph recovery in $O$ is easier to accomplish when the magnitude $\nu$ of the weakest edge in $O$ and the smallest eigenvalue $\lambda_{\rm min}$ of $\Theta$ are large, while the size $q$ and maximum node degree $d_{O^c}$ in $O^c$ are small. Finally, note that (A3) reduces to (A2) if $\gamma_B\to 0$. In Appendix~\ref{app:popcaseK2} we discuss the special case $K=2$ in more detail.

\paragraph*{The latent variable graphical model} 
In this paragraph we illustrate the relationship between Graph Quilting in the case $K=2$, and the problem of estimating a conditional dependence graph in the presence of latent variables. Suppose that $V = A \cup C$ where $A\cap C=\emptyset$, and that the nodes in $C$ are hidden. It is known that $\Sigma_{AA}^{-1} ~=~ \Theta_{AA}-\Theta_{AC}\Theta_{CC}^{-1}\Theta_{CA}$, where $\Theta_{AA}$ is the $A\times A$ portion of the precision matrix $\Theta$ encoding the dependence structure of nodes $A$ conditionally on $C$, while the second term of the right-hand-side has rank no larger than $|A|$, and accounts for the network effects of the hidden nodes in $C$. Based on this fact, \cite{chandrasekaran2012} proposed to estimate $E_{AA}$ -- the graph structure in $\Theta_{AA}$ -- by first estimating the inverse covariance matrix of $A$ as $\hat{\Sigma_{AA}^{-1}} ~=~ \hat S-\hat L$, where $\hat S$ is a sparse matrix and $\hat L$ is a low rank matrix, and then taking the support of $\hat S$ as an estimate of $E_{AA}$. 

Suppose now we are in a Graph Quilting scenario with $O=(A\times A)\cup(C\times C)$. Then, the \madgq{} solution (Equation~(\ref{eq:gq0})) is equal to
\begin{equation}
\tilde\Theta = \left[
\begin{array}{cc}
\Sigma_{AA}^{-1} & 0\\
0 & \Sigma_{CC}^{-1}
\end{array} \right]
\end{equation}
First note that Theorem~\ref{theo:nofnd} guarantees that $\tilde\Theta_O$ contains no false negatives.  In other words, ignoring the hidden nodes in the latent variable graphical model problem yields no false negatives and can only lead to false positives.   Moreover, Theorem~\ref{theo:popO} and Corollary~\ref{coro:popOABC} establish that, under appropriate conditions, $E_O$ can be perfectly recovered from $\tilde\Theta_O$ by assigning edges wherever $|\tilde\Theta_{ij}|>\tau$, for any $\tau\in[\delta,\nu-\delta)$. But since $\tilde\Theta_{AA}$ is not a function of $\Sigma_{CC}$, this thresholding is valid even if we only observe $A\times A$, that is even if nodes $C$ are unobserved! The following corollary summarizes this result: 
\begin{corollary}[\scbf{Latent variable graphical model}]\label{coro:poplvgm}
Suppose we observe $\Sigma_{AA}$, and 
 $C=V\setminus A\neq\emptyset$ are hidden nodes. If $\gamma <\sqrt{\frac{\nu\lambda_{\min}}{2d_{O^c}^2}} $, then $\delta <\nu/2$  and for any $\tau\in [\delta,\nu-\delta)$ we have $\left\{(i,j)\in A\times A:~i\neq j, \left|[\Sigma_{AA}^{-1}]_{ij}\right|>\tau\right\} ~=~ E_{AA}$, and ${\rm sign}([\Sigma_{AA}^{-1}]_{ij})={\rm sign}(\Theta_{ij})$ for all $(i,j)\in E_{AA}$.
\end{corollary}
This corollary states that the subgraph connecting the nodes $A$ within the full conditional dependence graph of $V=A\cup C$ can be retrieved by just appropriately thresholding the entries of $\Sigma_{AA}^{-1}$. Indeed, under the assumptions of the corollary, $\Sigma_{AA}^{-1}$ contains no false negative edges (also in agreement with Theorem~\ref{theo:nofnd}), but only weak false positive edges which are all eliminated by the thresholding operation at level $\tau$, with no risk of producing any false negative edges. Consequently, at the estimation level, it may be possible to avoid estimating the two matrix components $S$ and $L$ of the sparse and low-rank decomposition \cite{chandrasekaran2012}, but rather just obtain a good estimate of $\Sigma_{AA}^{-1}$ to threshold, involving the estimation of a much smaller number of parameters. This approach has been recently explored in \cite{wang2021thresholded}.

\subsection{Graph recovery in \texorpdfstring{$O^c$}{Lg} via Oracle Distortions in \texorpdfstring{$O$}{Lg}}\label{sec:popOc}\less
Recovering the edge set $E_{O^c}$ from $\Sigma_O$ is a seemingly impossible task because it requires us to verify conditional dependences of variable pairs with no information about their marginal dependences. However, here we show that with some assumptions it is actually possible to retrieve substantial information about $E_{O^c}$, even with no knowledge of $\Sigma_{O^c}$! This is possible because the distortions between $\Theta_O$ and $\tilde\Theta_O$ have a pattern which depends on the precise edge structure in $O^c$, and they can be used to triangulate the plausible graphical structures in $O^c$. 

This section is organized as follows. We first study how the distortions propagate in $\tilde\Theta_O$ depending on the precise position of the edges in $O^c$. We then introduce the definition of minimal superset of the edge set $E_{O^c}$ based on the oracle knowledge of the distortions. The oracle results presented in this section, while impractical, constitute the theoretical foundations of the more practical approach proposed in Section~\ref{sec:popFull}, which does not require oracle knowledge of the distortions in $O$. 

\paragraph*{Notation} The following graph theoretic terminology will help characterize the graph recovery in $O^c$. Let $U \subseteq V$ be an arbitrary subset of nodes and let $G_U$ denote the subgraph of $G$ induced by $U$, i.e., the graph whose vertex set is $U$ and edge set is $E \cap (U\times U)$; indeed, $G_V = G$. We will let $N(i):=\{j\in V: (i,j)\in E\}$ denote the neighborhood of $i$. Two nodes $i$ and $j$ are neighbours (a.k.a. adjacent) if $i\in N(j)$, or equivalently, $j\in N(i)$. We further let $N_U(i):=N(i)\cap U$ be the set of neighbours of $i$ that are in $U$. Given two subsets $U,F\subseteq V$, we let $N_U(F) := \bigcup_{i\in F}N_U(i)\subseteq U$ be the set of nodes in $U$ that are neighbours of one or more nodes in $F$. Two nodes $i,j\in U$ are $U$-connected if they are  connected through some path completely within $U$.

\subsubsection{Distortion propagation}\less
The main component of our approach to recover edges in $O^c$ is given by the following fundamental property that entangles the \madgq{} matrix  $\tilde\Theta$ with the true precision matrix $\Theta$ through $\Sigma$ by virtue of the Schur complement: 
\begin{lemma}[\madgqbf{} \scbf{Entanglement}]\label{lemma:schurgen}
For any set $U\subseteq V$ such that $U\times U\subseteq O$,
\begin{equation}\label{eq:thetabarvk1}
\tilde\Theta_{UU}-\tilde\Theta_{UU^c}\tilde\Theta_{U^cU^c} ^{-1}\tilde\Theta_{U^cU}~=~\Sigma_{UU}^{-1}~=~\Theta_{UU}-\Theta_{UU^c}\Theta_{U^cU^c}^{-1}\Theta_{U^cU}.
\end{equation}
\end{lemma}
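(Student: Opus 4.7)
The plan is to reduce both sides of the entanglement identity to the common intermediate quantity $\Sigma_{UU}^{-1}$ via the block-inverse (Schur complement) formula, and to use the covariance-preservation property of the MAD$_{GQ}$ solution established in Lemma~\ref{lemma:maxdetopt}.

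First I would recall the standard block-matrix inverse identity: for any symmetric positive definite matrix $M$ partitioned according to an index subset $U\subset V$ and its complement $U^c$, the $UU$ block of $M^{-1}$ is given by $[M^{-1}]_{UU}=\bigl(M_{UU}-M_{UU^c}M_{U^cU^c}^{-1}M_{U^cU}\bigr)^{-1}$. Applied to $M=\Theta$, whose inverse is $\Sigma$ by definition, this immediately yields $\Sigma_{UU}=(\Theta_{UU}-\Theta_{UU^c}\Theta_{U^cU^c}^{-1}\Theta_{U^cU})^{-1}$, and inverting both sides produces the right-hand equality of the lemma.

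Next I would apply the same block-inverse identity to $M=\tilde\Theta$, which is positive definite by Definition~\ref{def:gq0}. This gives $[\tilde\Theta^{-1}]_{UU}=(\tilde\Theta_{UU}-\tilde\Theta_{UU^c}\tilde\Theta_{U^cU^c}^{-1}\tilde\Theta_{U^cU})^{-1}$. The critical step is now to identify $[\tilde\Theta^{-1}]_{UU}$ with $\Sigma_{UU}$. By Lemma~\ref{lemma:maxdetopt}, the MAD$_{GQ}$ completion $\tilde\Sigma:=\tilde\Theta^{-1}$ satisfies $\tilde\Sigma_{O}=\Sigma_{O}$. Since the hypothesis $U\times U\subseteq O$ guarantees that every entry of the $UU$ block lies in the observed set, we obtain $[\tilde\Theta^{-1}]_{UU}=\tilde\Sigma_{UU}=\Sigma_{UU}$. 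Inverting then yields the left-hand equality of the lemma.

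There is essentially no hard step here: the argument is a one-line application of the Schur complement formula coupled with the covariance-matching property $[\tilde\Theta^{-1}]_O=\Sigma_O$. The only technical point worth flagging is ensuring that $\tilde\Theta_{U^cU^c}$ is invertible so the Schur complement is well defined; this follows from $\tilde\Theta\succ 0$, since any principal submatrix of a positive definite matrix is itself positive definite. An analogous remark applies to $\Theta_{U^cU^c}$ on the right-hand side. Combining the two chains of equalities through the common middle term $\Sigma_{UU}^{-1}$ completes the proof.
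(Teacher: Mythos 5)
Your proof is correct and follows essentially the same route as the paper's: apply the Schur complement block-inverse identity to both $\Theta$ and $\tilde\Theta$, and identify the two via the covariance-preservation property $[\tilde\Theta^{-1}]_O=\Sigma_O$ from Lemma~\ref{lemma:maxdetopt} together with the hypothesis $U\times U\subseteq O$. Your additional remark on the invertibility of the principal submatrices is a harmless clarification the paper leaves implicit.
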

In order to use this result for the identification of the edges in $O^c$, first let us define some useful quantities: the $k$-th \madgq{} Schur complement is given by
\begin{equation}\label{eq:madgqschur}
\tilde\Theta^{(k)} ~:=~ \tilde\Theta_{V_kV_k}-\tilde\Theta_{V_kV_k^c}\tilde\Theta_{V_k^cV_k^c}^{-1}\tilde\Theta_{V_k^cV_k},
\end{equation}
and the $k$-th block distortion of the node pair $(i,j)\in V_k\times V_k$ is given by
\begin{equation}\label{eq:distijk}
\delta^{(k)}_{ij} ~:=~ \Theta_{ij}-\tilde\Theta^{(k)}_{ij},
\end{equation}
where $\tilde\Theta^{(k)}_{ij}$ is the entry of $\tilde\Theta^{(k)}$ relative to the node pair $(i,j)$. 
Moreover, let 
\begin{equation}
    H_i ~:=~ \left\{j\in V: (i,j)\in O^c\right\}
\end{equation}
be the set of nodes that are not jointly observed with node $i$ (i.e. $\Sigma_{ij}$ is not observed if $j\in H_i$). The next theorem precisely describes the relationship between distortions and edges in $O^c$:
\begin{theorem}[\scbf{Distortion Propagation}]\label{theo:distprK}
Let $k\in\{1,...,K\}$:
\begin{enumerate}[(i).]
\item For any $i\in V_k$ with $H_i\neq\emptyset$, we have
\begin{equation}\label{eq:deltaii}
\delta^{(k)}_{ii}> 0 ~~~\text{if}~~~ \Theta_{iH_i}\neq 0.
\end{equation}
If $H_i=V_k^c$, then the condition is sufficient and necessary.
\item For any $i,j\in V_k$ with $H_i\neq\emptyset$ and $i\neq j$, almost everywhere, we have
\begin{equation}\label{eq:distpropijcond}
\delta^{(k)}_{ij}\neq 0 ~~\text{if}~~ \exists h\in N_{H_i}(i) \text{ and } \exists l\in N_{H_i}(j) \text{ s.t. } h=l \text{ or } h \text{ is }  H_i\text{-connected to}~l. 
\end{equation}
If $H_i=V_k^c$, then the condition is sufficient and necessary.
\item 
For any $i,j\in V_k$, if $\delta_{ij}^{(k)}\neq0$ then $\delta_{ii}^{(k)}>0$ and $\delta_{jj}^{(k)}>0$.
\end{enumerate}
\end{theorem}
Part~(i) of Theorem~\ref{theo:distprK} states that if node $i$ is neighbour of some node in $H_i$, then there will be a distortion on the diagonal entry of node $i$ in every \madgq{} Schur complement $\tilde\Theta^{(k)}$ where $i\in V_k$. Part~(ii) states that an entry $(i,j)$ of a \madgq{} Schur complement is distorted if nodes $i$ and $j$ are connected through some path of length $>1$ completely within $H_i\cup\{i,j\}$ and/or $H_j\cup\{i,j\}$. Indeed, the \madgq{} optimization (Equation~(\ref{eq:maxdet})) sets $\tilde\Theta_{iH_i}=0$ and $\tilde\Theta_{jH_j}=0$, thereby disrupting any dependence path between $i$ and $j$ and generating a distortion. Finally, part~(iii) reveals that an off-diagonal entry $(i,j)$ of a \madgq{} Schur complement is distorted only if the diagonals $(i,i)$ and $(j,j)$ are distorted. The following corollary of Theorem~\ref{theo:distprK} focuses on the specific effects of an edge $(i,j)\in O^c$ on the entries of the \madgq{} Schur complements:
\begin{corollary}\label{coro:distpropagK}
For any $(i,j)\in O^c$, we have
\begin{enumerate}[(i).]
    \item $\Theta_{ij}\neq 0\Rightarrow \delta^{(k)}_{ii},\delta^{(h)}_{jj}> 0$, for all $k$ and $h$ such that $i\in V_k$ and $j\in V_h$.
    \item $\Theta_{ij}\neq 0\Rightarrow \delta^{(k)}_{is}\neq 0$, for all $k$ and $s$ such that $i\in V_k$, and $s\in V_k\setminus\{i\}$ is $(H_i\cup\{s\})$-connected to $j$ [a.e.].
\end{enumerate}
\end{corollary}
Part~(i) of the corollary states that if $(i,j)\in E_{O^c}$, then the diagonal entries $(i,i)$ and $(j,j)$ of all related \madgq{} Schur complements will be distorted. Part~(ii) states that if $(i,j)\in E_{O^c}$, then there will also be a distortion in the off-diagonal entry $(i,s)$ of all related \madgq{} Schur complements as long as node $j$ is connected to $s$ through some path of length $>1$ completely within $H_i\cup\{s,j\}$.

\subsubsection{Superset minimality}\less
In this section we establish that with incomplete covariance information it is at least possible to recover a minimal superset of $E_{O^c}$ by exploiting the types of distortions considered in the Distortion Propagation Theorem~\ref{theo:distprK}. The minimal superset is defined as follows:
\begin{definition}[\scbf{Minimal Superset  of $E_{O^c}$}]\label{def:minimalityK}
Let 
\begin{equation}
    \mathcal{D}_Q(\Sigma, O):=\big\{(i,j,k)\in Q: ~\delta^{(k)}_{ij}\neq 0\big\}
\end{equation}
be the set of known distortions over the entries $Q\subseteq \bigcup\limits_{k=1}^K V_k\times V_k \times \{k\} $ of the Schur complements $\tilde\Theta^{(1)},...,\tilde\Theta^{(K)}$, and let 
\begin{equation}\label{eq:admiss}
\mathcal{A}(\Sigma,O,Q):=\left\{\Sigma'\succ 0: ~\Sigma'_O=\Sigma_O,~ \mathcal{D}_Q(\Sigma',O)=\mathcal{D}_Q(\Sigma,O)\right\}
\end{equation}
be the set of all positive definite covariance matrices that agree with the observed $\Sigma_O$ and distortions $\mathcal{D}_Q(\Sigma, O)$. A set $\mathcal{S}\subseteq O^c$ is the minimal superset of $E_{O^c}$ with respect to $\Sigma_O$ and $\mathcal{D}_Q(\Sigma,O)$ if it satisfies the following properties:
\begin{enumerate}[(i).]
\item $\forall\Sigma'\in\mathcal{A}(\Sigma,O,Q)$ we have $E_{O^c}'\subseteq \mathcal{S}$;
\item $\forall\mathcal{S}' \subsetneq \mathcal{S}$, $\exists\Sigma'\in\mathcal{A}(\Sigma,O,Q)$ such that $E'_{O^c} \cap (\mathcal{S}\setminus \mathcal{S}') \neq \emptyset$.   
\end{enumerate}
\end{definition}
Thus, a minimal superset $\mathcal{S}$  of $E_{O^c}$ given the set of known (oracle) distortions $\mathcal{D}_Q(\Sigma,O)$ is the smallest possible superset in the sense that it includes all plausible graphical structures $E_{O^c}$ that would induce the same known (oracle) distortions in the \madgq{} Schur complements. Thus, any other set $\mathcal{S}'\neq\mathcal{S}$ is either not a superset of $E_{O^c}$, or it is larger than $\mathcal{S}$, or it does not include one or more plausible edges. An expression of the minimal superset is
\begin{equation}
\mathcal{S} := \bigcup\limits_{\Sigma'\in \mathcal{A}(\Sigma,O,Q)}\big\{(i,j)\in O^c:~ \left[\Sigma'^{-1} \right]_{ij}\neq 0\big\}
\end{equation}
In the following, we will consider the cases where we have oracle knowledge of all distortions on the diagonal entries of the \madgq{} Schur complements, or on the off-diagonals.

\subsubsection{Oracle minimal superset recovery}\less 
Towards the statement of our main Theorem~\ref{theo:OcK} for the oracle recovery of $E_{O^c}$, let us first define some quantities. Define the set
\begin{equation}\label{eq:supersetK}
\mathcal{S}_{\rm diag} ~:=~ O^c\cap (D_{\rm diag}\times D_{\rm diag})
\end{equation}
where $D_{\rm diag}=\big\{i\in V: ~\delta_{ii}^{(k)}>0,~\forall k ~\text{s.t}.~ i\in V_k\big\}$ is the set of nodes with at least one diagonal distortion in every related Schur complement. Moreover, define the set 
\begin{equation}\label{eq:supersetKoff}
    \mathcal{S}_{\rm off} ~:=~ O^c\cap (D_{\rm off}\times D_{\rm off})
\end{equation}
where $D_{\rm off}= \big\{i\in V:~\delta^{(k)}_{i(-i)}\neq 0,~\forall k ~\text{s.t}.~ i\in V_k\big\} $ is the set of nodes with at least one off-diagonal distortion in every related Schur complement, and $\delta^{(k)}_{i(-i)}=(\delta^{(k)}_{ik})_{j\in V_{k}\setminus\{i\}}$ is the vector of distortions on the row of node $i$ in the \madgq{} Schur complement $\tilde\Theta^{(k)}$. Furthermore, consider the following assumption: 
\begin{itemize}
\item[(A4).] For every node $i\in V$ with $N_{H_i}(i)\neq\emptyset$, we have that for every $k$ such that $i\in V_k$, there exists at least one node $j\in V_k\setminus \{i\}$ that is $(H_i\cup\{j\})$-connected to some node in $N_{H_i}(i)$.
\end{itemize}
We are now ready to state our main theorem for the oracle recovery of $E_{O^c}$:
\begin{theorem}[\scbf{Oracle Minimal Superset of $E_{O^c}$}]\label{theo:OcK}
Let $\mathcal{S}_{\rm diag}$ and $\mathcal{S}_{\rm off}$ be the sets in Equations~(\ref{eq:supersetK}) and (\ref{eq:supersetKoff}). Then, in the sense of Definition~\ref{def:minimalityK}:
\begin{enumerate}[(i).]
\item The set $\mathcal{S}_{\rm diag}$ is the minimal superset of $E_{O^c}$ given the set of diagonal distortions $\mathcal{D}_{\rm diag}(\Sigma, O) = \big\{(i,i,k): \delta^{(k)}_{ii}>0 \big\}$. 
\item The set $\mathcal{S}_{\rm off}\subseteq \mathcal{S}_{\rm diag}$ is the minimal superset of $E_{O^c}$ [a.e.] given the set of  off-diagonal distortions $\mathcal{D}_{\rm off}(\Sigma, O)=\big\{(i,j,k):\delta^{(k)}_{ij}\neq 0, i\neq j\big\}$ if and only if Assumption~(A4) holds.
\end{enumerate}
\end{theorem}
Part~(i) of the theorem establishes (constructively) than for any set returned that is smaller that $\mathcal{S}_{\rm diag}$, there are problems where one necessarily will fail to detect true edges in $E_{O^c}$. Part~(ii) of the theorem establishes that, under assumption~(A4), the set $\mathcal{S}_{\rm off}$ in Equation~(\ref{eq:supersetKoff}) is the minimal superset of $E_{O^c}$ based on the knowledge of the off-diagonal distortions.

\subsection{Full graph recovery}\label{sec:popFull}\less
We now condense the results of Sections~\ref{sec:popO} and \ref{sec:popOc} into one algorithm, Algorithm~\ref{algo:fullrecoveryK}, for the recovery of the full edge set $E$. This algorithm does not require the oracle knowledge of the distortions for the recovery of the edges in $O^c$, but instead it only exploits the off-diagonal entries in the \madgq{} Schur Complements that are identified as distorted because their magnitudes are too small. Theorem~\ref{theo:fullrecoveryK} establishes the properties of the output edge set $\mathcal{E}^\tau$ of Algorithm~\ref{algo:fullrecoveryK}, and requires the following assumption:
\begin{enumerate}
    \item[(A5).]  If $\delta_{i(-i)}^{(k)}\neq 0$, then there exists $j\neq i$ such that $0<|\tilde\Theta_{ij}^{(k)}|<\delta$.
\end{enumerate}
\begin{theorem}[\scbf{GQ Graph recovery (population case)}]\label{theo:fullrecoveryK}
If Assumptions (A4)-(A5) and the conditions of Theorem~\ref{theo:popO} hold such that $\delta<\nu/2$, then, for any $\tau\in [\delta,\nu-\delta)$, the output edge set $\mathcal{E}^\tau$ of Algorithm~\ref{algo:fullrecoveryK} satisfies  $\mathcal{E}^\tau_O=E_O$ and $\mathcal{E}_{O^c}^\tau=\mathcal{S}_{\rm off}$, where $\mathcal{S}_{\rm off} $ is the minimal superset of $E_{O^c}$ [a.e.] in Equation~(\ref{eq:supersetKoff}).
\end{theorem}
Theorem~\ref{theo:fullrecoveryK} combines Theorem~\ref{theo:popO} and Theorem~\ref{theo:OcK}. 
The set $\mathcal{E}_O^\tau\equiv \tilde E_O^\tau$ equals the true edge set $E_O$ since $\delta<\nu/2$ and $\tau\in[\delta,\nu-\delta)$, as per Theorem~\ref{theo:popO}. This means that no off-diagonal entry of $\Theta_O$ has magnitude in the interval $(0,\tau]$. Hence, if $0<|\tilde\Theta^{(k)}_{ij}|<\tau$, then $\delta^{(k)}_{ij}\neq 0$. Thus, under Assumption~(A5), if $\tau\in[\delta,\nu-\delta)$, then the set $W_\tau$ in Algorithm~\ref{algo:fullrecoveryK} contains every node $i$ that is associated with at least one off-diagonal distortion in every \madgq{} Schur Complement $\tilde\Theta^{(k)}$ where $i\in V_k$. Therefore, $W_\tau$ matches the set $D_{\rm off}$ in Equation~(\ref{eq:supersetKoff}) and thereby $\mathcal{E}_{O^c}^\tau\equiv\mathcal{S}_{\rm off}$, where, under Assumption~(A4), $\mathcal{S}_{\rm off}$ is the minimal superset of $E_{O^c}$ as per Theorem~\ref{theo:OcK}. Examples of full graph recovery are shown in Figures~\ref{fig:poprecovery}(D) and \ref{fig:poprecovery}(H).

\begin{algorithm}[t]\normalsize
\scbf{Input}: $V_1,...,V_K$, $\Sigma_O$, $\tau>0$\;
 \begin{enumerate}
    \item  Compute the \madgq{} matrix
    \[
    \tilde\Theta ~=~  \underset{\Theta\succ 0, ~\Theta_{O^c}=0}{\arg\max}~ \log\det \Theta - \sum_{(i,j)\in O}\Theta_{ij}\Sigma_{ij}
    \]
    \item Find the edge set $\tilde E_O^\tau=\big\{(i,j)\in O: i\neq j, |\tilde\Theta_{ij}|>\tau\big\}$.
    
    \item For $k=1,...,K$, compute the Schur complement
    \[
    \tilde\Theta^{(k)}~:=~\tilde\Theta_{V_kV_k}-\tilde\Theta_{V_kV_k^c}\tilde\Theta_{V_k^cV_k^c}^{-1}\tilde\Theta_{V_k^cV_k}
    \] 
    \item Obtain the node set
    \[W_\tau=\big\{i\in V:~ \forall k ~s.t.~i\in V_k,~\exists j\neq i, 0<|\tilde\Theta^{(k)}_{ij}|< \tau\big\}
    \]
    \item Obtain the set $\mathcal{U}_\tau=O^c\cap(W_\tau\times W_\tau)$.
\end{enumerate}
\scbf{Output}:  Edge set
\begin{equation}\label{eq:fulledegqpop}
\mathcal{E}^\tau ~=~ \tilde E_O^\tau\cup \mathcal{U}_\tau\vspace{-6mm}
\end{equation}
\caption{\scbf{GQ graph recovery (population case)}}\label{algo:fullrecoveryK}
\end{algorithm}

\section{Graph Recovery: Finite Sample Analysis}\label{sec:estimation}\less
In Section~\ref{sec:popK}, we investigated the Graph Quilting problem at the population level, where we have access to the true incomplete covariance matrix $\Sigma_O$. In this section, we investigate the Graph Quilting problem in the finite sample setting, where the population quantity $\Sigma_O$ is replaced by an empirical estimate $\hat\Sigma_O$. This setting is more challenging because $\hat\Sigma_O$ is not guaranteed to be completable to a positive definite matrix and, consequently, the \madgq{} optimization (Equation~(\ref{eq:gq0})) based on $\hat\Sigma_O$ in place of $\Sigma_O$ is not guaranteed to produce a unique solution. We circumvent this issue by using regularization. We propose the \madgqlassobf{}, an $\ell_1$-regularized variant of the \madgq{} that performs simultaneously precision matrix reconstruction and regularized estimation based on $\hat\Sigma_O$. The \madgqlasso{} estimator $\hat{\tilde\Theta}$ is well defined in high dimensions and converges to the \madgq{} solution $\tilde\Theta$ (Equation~(\ref{eq:gq0})) with rates similar to the graphical lasso \cite{yuan2007model,ravikumar2011high}. We use the \madgqlasso{} to construct a graph estimator following the procedures developed in Section~\ref{sec:popK}. 
In Section~\ref{sec:estimator}, we define our estimators, and in Section~\ref{sec:statprop}, we establish their statistical properties.

\subsection{Estimators}\label{sec:estimator}\less
Let $X^{(1)}$, ..., $X^{(n)}$ be independent and identically distributed $p$--dimensional random vectors with mean vector $\mu\in\mathbb{R}^p$ and $p\times p$ positive definite covariance matrix $\Sigma\succ 0$. Let $V^{(r)}=\{i\in V:X_i^{(r)}\text{ is observed}\}$ be the set of nodes that are observed on sample $r$ and let $n_{ij}=|\{r:(i,j)\in V^{(r)}\times V^{(r)}\}|$ be the joint sample size for node pair $(i,j)$. Moreover, let $O=\bigcup_{r=1}^nV^{(r)}\times V^{(r)}$. We define the observed sample covariance as follows: 
\begin{definition}[\scbf{Observed Sample Covariance}]\label{def:obscov}
The observed sample covariance of the pair of nodes $(i,j)\in O$ is given by
\begin{equation}\label{eq:compcov}
\hat\Sigma_{ij} ~:=~ \frac{1}{n_{ij}}\sum_{r:(i,j)\in V^{(r)}\times V^{(r)}}(X_i^{(r)}-m_i)(X_j^{(r)}-m_j)
\end{equation}
where $m_k=\frac{1}{n_{kk}}\sum_{r:k\in V^{(r)}}X_k^{(r)}$, or $m_k=\mathbb{E}[X_k]$ if known.
\end{definition}
Assuming $\mathbb{E}[X_i^2]<\infty$ for all $i\in V$, by Weak Law of Large Numbers, $\hat\Sigma_{ij}\stackrel{P}{\to}\Sigma_{ij}$ as $n_{ij}\to\infty$, 
for any $(i,j)\in O$. Yet, for finite sample sizes $(n_{ij})_{(i,j)\in O}$, the principal minors of the incomplete matrix $\hat\Sigma_O$ are not all guaranteed to be positive, in which case $\hat\Sigma_O$ may not be completed into a positive definite matrix and, consequently, the \madgq{} problem in Equation~(\ref{eq:gq0}) would not yield a unique solution if based on $\hat\Sigma_O$ in place of $\Sigma_O$. We use regularization to overcome this problem and to further improve estimation accuracy in high-dimensions. We propose the \madgqlasso{}, an $\ell_1$-regularized variant of the \madgq{} optimization problem (Equation~(\ref{eq:gq0})):
\begin{definition}[{\bf MAD$_{\mathbf{GQlasso}}$}]\label{def:l1gq}
The MAD Graph Quilting lasso is the solution of the $\ell_1$-penalized optimization problem \begin{equation}\label{eq:l1gq}
~~~\hat{\tilde\Theta} ~:=~ \underset{\Theta\succ 0,\Theta_{O^c}=0}{\arg\max}~~ \log\det\Theta - \sum_{(i,j)\in O}\Theta_{ij}\hat\Sigma_{ij}  ~-~\Vert \Lambda\odot \Theta\Vert_{1,\rm off} ~~~~(\textsc{MAD}_{\rm GQlasso})
\end{equation}
where $ \hat\Sigma_{ij}$ is the observed sample covariance defined in Equation~(\ref{eq:compcov}), $\Lambda=[\lambda_{ij}]\in\mathbb{R}_{0,+}^{p\times p}$ is a matrix of nonnegative penalty parameters,  $\odot$ denotes the Hadamard entrywise matrix product, and $\Vert M\Vert_{1,{\rm off}}=\sum_{i\neq j}|M_{ij}|$ is the $\ell_1$ matrix norm computed only over the off-diagonals of the matrix $M\in\mathbb{R}^{p\times p}$.\vspace{-1mm}
\end{definition}
The \madgqlasso{} optimization problem in Equation~(\ref{eq:l1gq}) combines the \madgq{} problem in Equation~(\ref{eq:gq0}), which imposes the constraint $\Theta_{ij}=0$ for all $(i,j)\in O^c$, with an $\ell_1$ penalty over the off-diagonal entries of $\Theta$. The following lemma guarantees that Equation~(\ref{eq:l1gq}) has a unique solution as long as the diagonals of $\hat\Sigma_O$ are positive, without requiring all principal minors of $\hat\Sigma_O$ to be positive:
\begin{lemma}\label{lemma:gqlassoexun} The \madgqlasso{} optimization problem in Equation~(\ref{eq:l1gq}) has a unique solution if $\Vert\hat\Sigma_O\Vert_\infty<\infty$, and $\hat\Sigma_{ii}>0$ for all $i\in V$, and $\lambda_{ij}>0$, for all $(i,j)\in O$, $i\neq j$.
\end{lemma}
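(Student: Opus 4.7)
The plan is to prove uniqueness and existence of the MAD$_{GQlasso}$ minimizer separately, then combine them. For uniqueness I would rely on strict convexity of the objective on a convex set; for existence I would establish coercivity.

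For strict convexity and uniqueness, the map $\Theta\mapsto -\log\det\Theta$ is strictly convex on the open positive definite cone, the map $\Theta\mapsto \sum_{(i,j)\in O}\hat{\Sigma}_{ij}\Theta_{ij}$ is affine in $\Theta$, and $\Theta\mapsto\Vert\Lambda\odot\Theta\Vert_{1,\mathrm{off}}=\sum_{i\neq j}\lambda_{ij}|\Theta_{ij}|$ is convex as a nonnegatively weighted sum of absolute values, so the (negated-log-likelihood) objective
\[
f(\Theta)\;=\;-\log\det\Theta\;+\;\sum_{(i,j)\in O}\hat\Sigma_{ij}\Theta_{ij}\;+\;\Vert\Lambda\odot\Theta\Vert_{1,\mathrm{off}}
\]
is strictly convex. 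The feasible set $\mathcal{F}=\{\Theta\succ 0:\Theta_{O^c}=0\}$ is the intersection of the convex open positive definite cone with the affine subspace $\{\Theta:\Theta_{O^c}=0\}$, hence convex. Strict convexity on a convex set guarantees that any minimizer is unique.

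For existence I would show that $f$ is coercive on $\mathcal{F}$, which together with the finiteness of $f(I)=\sum_i\hat\Sigma_{ii}<\infty$ at the feasible point $\Theta=I\in\mathcal{F}$ (noting that $(i,i)\in O$ for every $i$ by Definition~\ref{obscov}) and the continuity of $f$ on $\mathcal{F}$ gives existence via standard arguments on closed sublevel sets of continuous coercive functions. Coercivity requires ruling out two escape modes. If $\lambda_{\min}(\Theta_n)\to 0^{+}$, then $-\log\det\Theta_n\to +\infty$, while the remaining terms grow at most linearly in $\mathrm{tr}(\Theta_n)$, so they cannot compensate on any bounded-trace subsequence. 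If instead $\mathrm{tr}(\Theta_n)\to\infty$, I would combine Hadamard's inequality and AM--GM to obtain $-\log\det\Theta_n\ge -p\log(\mathrm{tr}(\Theta_n)/p)$, the strict positivity $\hat\Sigma_{ii}>0$ to obtain $\sum_i\hat\Sigma_{ii}\Theta_{n,ii}\ge (\min_i\hat\Sigma_{ii})\,\mathrm{tr}(\Theta_n)$, and the strict positivity $\lambda_{ij}>0$ of the penalty, so that the linear-in-trace growth dominates the logarithmic term and the off-diagonal cross contributions are absorbed.

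The main obstacle is exactly this control of the off-diagonal cross-term $\sum_{(i,j)\in O, i\neq j}\hat\Sigma_{ij}\Theta_{n,ij}$ in the coercivity step, because the partially observed matrix $\hat\Sigma_O$ extended by zero on $O^c$ is not in general positive semidefinite, so the familiar graphical lasso inequality $\mathrm{tr}(\hat\Sigma\Theta)\ge 0$ is unavailable. I would handle this by pairing each cross term with its diagonals via the positive-definite inequality $|\Theta_{n,ij}|\le\sqrt{\Theta_{n,ii}\Theta_{n,jj}}\le \tfrac{1}{2}(\Theta_{n,ii}+\Theta_{n,jj})$, together with $\Vert\hat\Sigma_O\Vert_\infty<\infty$, to get $|\hat\Sigma_{ij}\Theta_{n,ij}|\le\tfrac{1}{2}\Vert\hat\Sigma_O\Vert_\infty(\Theta_{n,ii}+\Theta_{n,jj})$; the resulting contributions are then dominated by the linear-in-diagonal growth driven by $\min_i\hat\Sigma_{ii}>0$ and the strictly positive $\ell_1$ penalty, so that $f(\Theta_n)\to+\infty$. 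Combining coercivity (hence existence) with strict convexity (hence uniqueness) yields the unique solution claimed.
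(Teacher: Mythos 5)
Your uniqueness argument (strict convexity of $-\log\det$ plus convexity of the affine and penalty terms on the convex feasible set $\{\Theta\succ 0:\Theta_{O^c}=0\}$) matches the paper's and is fine. The genuine gap is in the existence step, in the regime $\mathrm{tr}(\Theta_n)\to\infty$. Your pairing device gives
\[
\Bigl|\sum_{(i,j)\in O,\,i\neq j}\hat\Sigma_{ij}\Theta_{n,ij}\Bigr|\;\le\;\tfrac{1}{2}\Vert\hat\Sigma_O\Vert_\infty\sum_{(i,j)\in O,\,i\neq j}\bigl(\Theta_{n,ii}+\Theta_{n,jj}\bigr)\;\le\;(p-1)\,\Vert\hat\Sigma_O\Vert_\infty\,\mathrm{tr}(\Theta_n),
\]
so the best lower bound your inequalities produce is $f(\Theta_n)\ge -p\log(\mathrm{tr}(\Theta_n)/p)+\bigl[\min_i\hat\Sigma_{ii}-(p-1)\Vert\hat\Sigma_O\Vert_\infty\bigr]\mathrm{tr}(\Theta_n)+\cdots$, and the bracket can be negative. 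The penalty cannot repair this under the stated hypotheses: absorbing the cross term $\Vert\hat\Sigma_O\Vert_\infty\sum_{i\neq j}|\Theta_{ij}|$ into $\sum_{i\neq j}\lambda_{ij}|\Theta_{ij}|$ would require $\lambda_{ij}\ge\Vert\hat\Sigma_O\Vert_\infty$, whereas you only have $\lambda_{ij}>0$. So the final "domination" sentence is an assertion, not a consequence of the bounds you set up. The difficulty is not cosmetic: because the entries of $\hat\Sigma_O$ are computed on different subsamples, the zero-filled completion of $\hat\Sigma_O$ need not be positive semidefinite, and nothing in the hypotheses excludes, say, $p=2$, $\hat\Sigma=\bigl(\begin{smallmatrix}1&2\\2&1\end{smallmatrix}\bigr)$; along $\Theta_t=t\bigl(\begin{smallmatrix}1&-0.9\\-0.9&1\end{smallmatrix}\bigr)$ one gets $f(\Theta_t)=-2\log t-\log(0.19)-1.6t+1.8\lambda t$, which tends to $-\infty$ for $\lambda<8/9$. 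Hence no coercivity estimate of the form you propose can close the argument from the diagonal growth alone.

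The paper's proof avoids exactly this by using the penalty \emph{before} letting the trace grow: via Lagrangian duality it rewrites the problem with the constraint $\Vert\Lambda\odot\Theta\Vert_{1,\mathrm{off}}\le C(\Lambda)$, so that along the relevant set the off-diagonals are confined to a bounded weighted $\ell_1$-ball and the cross term $\sum_{(i,j)\in O,i\neq j}\hat\Sigma_{ij}\Theta_{ij}$ contributes only an additive constant (this is where $\Vert\hat\Sigma_O\Vert_\infty<\infty$ and $\lambda_{ij}>0$ enter). Only then does it apply Hadamard's inequality to reduce everything to the function $h(\Theta_{11},\dots,\Theta_{pp})=-\sum_{i}(\log\Theta_{ii}-\Theta_{ii}\hat\Sigma_{ii})$, which is coercive in the diagonals alone because $\hat\Sigma_{ii}>0$. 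If you want to keep a direct coercivity route, you must first show that the off-diagonals stay bounded along any minimizing sequence (exploiting the penalty or the constrained reformulation), rather than trying to beat the off-diagonal cross term with the diagonal linear growth.
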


Note that also the graphical lasso estimator \cite{yuan2007model} imposes an $\ell_1$ penalty which enforces sparse solutions, but it assumes $O\equiv V\times V$ and $n_{ij}\equiv n\ge 2$ for all node pairs $(i,j)$. Therefore, the \madgqlasso{} framework is more general than the graphical lasso, although it is an estimator of $\tilde\Theta$, rather than $\Theta$. It is also important to notice that the \madgqlasso{} optimization problem is not equivalent to a graphical lasso where we set $\hat\Sigma_{O^c}=0$ and do not impose the constraint $\Theta_{O^c}=0$. Indeed, in such case $\Theta_{O^c}$ would still be active in the optimization. 

We define the GQ graph estimator as the output $\hat{\mathcal{E}}$ of  Algorithm~\ref{algo:fullrecoveryKn}, which is based on the \madgqlasso{} and is the finite sample version of Algorithm~\ref{algo:fullrecoveryK}, where  $X^{(1)}_{V^{(1)}},...,X^{(n)}_{V^{(n)}}$ are the observed data and $V_1,...,V_K\subset V$ are such that $O:=\cup_{r=1}^n V^{(r)}\times V^{(r)} = \cup_{k=1}^K V_k\times V_k$, with smallest possible $K$. 
This algorithm follows the structure of  Algorithm~\ref{algo:fullrecoveryK}, except that the \madgq{} matrix $\tilde\Theta$ is replaced by the \madgqlasso{} estimator $\hat{\tilde\Theta}$ and, compared with the set $W_\tau$, the set of nodes $\hat W_{\tau_0,\tau_1}$ involves the additional threshold parameters $\tau_0$ and $\tau_1$ to better deal with the randomness of the \madgqlasso{} Schur complements. For example, $\tau_0>0$ is essential to reduce the number of false positive edges in $\hat{\mathcal{E}}_{O^c}$. The theorems presented next establish the optimal oracle choices of $\Lambda$, $\tau_0$, $\tau$, and $\tau_1$ as functions of sample size, number of nodes, max node degree, and size of $O^c$.

\vspace{-1mm}

\begin{algorithm}[t]\normalsize
\scbf{Input}: Observed data $X^{(1)}_{V^{(1)}},...,X^{(n)}_{V^{(n)}}$;
$V_1,...,V_K$; $\tau,\tau_0,\tau_1\ge 0$;
$\Lambda\in\mathbb{R}_{0,+}^{p\times p}$\;
\begin{enumerate}
\item Compute the observed covariances $\hat\Sigma_O=\big(\hat\Sigma_{ij}\big)_{(i,j)\in O}$ (Equation~(\ref{eq:compcov})) based on\\ the observed data, where $O=\cup_{k=1}^K V_k\times V_k$.
    \item  Compute the \madgqlasso{} matrix
    \[
    \hat{\tilde\Theta} ~=~ \underset{\Theta\succ 0,\Theta_{O^c}=0}{\arg\max}~~ \log\det\Theta - \sum_{(i,j)\in O}\Theta_{ij}\hat\Sigma_{ij}  ~-~\Vert \Lambda\odot \Theta\Vert_{1,\rm off}
    \]
    \item Find the edge set $\hat E_O^\tau=\big\{(i,j)\in O: i\neq j, |\hat{\tilde\Theta}_{ij}|>\tau\big\}$.
    \item For $k=1,...,K$, compute the Schur complement
    \[
 \hat{\tilde\Theta}^{(k)}~:=~ \hat{\tilde\Theta}_{V_kV_k}- \hat{\tilde\Theta}_{V_kV_k^c} \hat{\tilde\Theta}_{V_k^cV_k^c}^{-1} \hat{\tilde\Theta}_{V_k^cV_k}
    \] 
    \item Obtain the set
    \[\hat W_{\tau_0,\tau_1}=\big\{i\in V:~ \forall k ~s.t.~i\in V_k,~\exists j\neq i, \tau_0<| \hat{\tilde\Theta}^{(k)}_{ij}|< \tau_1\big\}
    \]
    \item Obtain the set $\hat{\mathcal{U}}_{\tau_0,\tau_1}=O^c\cap(\hat W_{\tau_0,\tau_1}\times \hat W_{\tau_0,\tau_1})$.
\end{enumerate}
\scbf{Output}:  Edge set
\begin{equation}
\hat{\mathcal{E}} ~:=~ \hat E_O^\tau\cup \hat{\mathcal{U}}_{\tau_0,\tau_1}\vspace{-6mm}
\end{equation}
\caption{\scbf{GQ graph recovery (finite sample case)}}\label{algo:fullrecoveryKn}
\end{algorithm}

\subsection{Statistical properties of the estimators}\label{sec:statprop}\less
In this section we establish the statistical properties of the estimators proposed in Section~\ref{sec:estimator}. We first specify notation and assumptions. We then state two theorems: Theorem~\ref{theo:convrate}, which establishes the rates of convergence of the \madgqlasso{}  (Equation~(\ref{eq:l1gq})) as an estimator of the \madgq{} matrix   (Equation~(\ref{eq:gq0})), and 
Theorem~\ref{theo:graphfinite}, which establishes the graph structure recovery guarantees of the GQ graph estimator $\hat{\mathcal{E}}$ produced by Algorithm~\ref{algo:fullrecoveryKn}. We further restate the results for the special case of sub-Gaussian (and Gaussian) random variables in Corollaries~\ref{coro:subgaussianrate} and \ref{coro:subgaussiangraph}.

\vspace{-1mm}

\subsubsection{Notation and assumptions}\less
Let $X^{(1)}$,..., $X^{(n)}$ be independent and identically distributed $p$-dimensional random vectors with mean vector $\mu=0$, positive definite covariance matrix $\Sigma\succ 0$, precision matrix $\Theta=\Sigma^{-1}$, and edge set $E=\{(i,j):i\neq j,\Theta_{ij}\neq 0\}$. 
Let $V^{(r)}=\{i\in V:X_i^{(r)}\text{ is observed}\}$ and $V_1,...,V_K\subset V$ be such that $O:=\cup_{r=1}^n V^{(r)}\times V^{(r)} = \cup_{k=1}^K V_k\times V_k$, with smallest possible $K$. Let $n_{ij}=|\{r:(i,j)\in V^{(r)}\times V^{(r)}\}|$ be the joint sample size for node pair $(i,j)$ and let $\bar n = \min_{(i,j)\in O}n_{ij}$ be the minimal joint sample size over the set $O$. Let $\tilde\Theta$ be the \madgq{} precision matrix in Equation~(\ref{eq:gq0}) based on $\Sigma_O=\left(\Sigma_{ij}\right)_{(i,j)\in O}$, and  $\tilde\Sigma=\tilde\Theta^{-1}$ and $\tilde E=\{(i,j):i\neq j, ~\tilde\Theta_{ij}\neq 0\}$. 
Moreover, let $\hat\Sigma_{ij}$ be the observed sample covariance of nodes $(i,j)$ (Equation~(\ref{eq:compcov})), and define the global and the local tail functions
\begin{eqnarray}\label{eq:tailmax}
\sigma(m,\varepsilon) ~&:=&~ \max\limits_{(i,j)\in O}~ \sigma_{ij}(m,\varepsilon),\\
\label{eq:tail}\sigma_{ij}(m,\varepsilon) ~&:=&~ \inf\big\{\sigma\ge 0 : P(|\hat\Sigma_{ij}-\Sigma_{ij}|>\sigma)\le\varepsilon^{-1}\big\},
\end{eqnarray}
where $\sigma_{ij}(m,\varepsilon)$ 
describes the tail behavior of $\hat\Sigma_{ij}$ with sample size $n_{ij}=m$, and $\varepsilon>0$; note that $\sigma_{ij}(m,\varepsilon)$ is nondecreasing with $\varepsilon\in (1,\infty)$. Recall $\delta=\max_{(i,j)\in O, i\neq j}|\Theta_{ij}-\tilde\Theta_{ij}|$ (Equation~(\ref{eq:delta})) and $\nu=\min_{(i,j)\in E_O} |\Theta_{ij}|$ (Equation~(\ref{eq:nu})), and define 
\begin{equation}\label{eq:psi}
    \psi~:=~\min_{(i,j,k):~0<|\tilde\Theta_{ij}^{(k)}|<\delta} \min\big(|\tilde\Theta_{ij}^{(k)}|,~~\delta
    -|\tilde\Theta_{ij}^{(k)}|\big)
\end{equation}
where $\tilde\Theta^{(k)}$ is the $k$-th \madgq{} Schur complement in Equation~(\ref{eq:madgqschur}). 
Let $d:=\max_{i\in V}|\{j:(i,j)\in O,\Theta_{ij}\neq 0\}|$ be the maximum row-degree of $\Theta_O$ (note that $d\ge 1$), and let $\tilde d$ be the maximum row-degree of $\tilde\Theta_O$. Finally, we shall say that a random variable $W$ is sub-Gaussian with sub-Gaussianity parameter $\omega>0$ if $\mathbb{E}\left[e^{tW}\right]\le e^{\mathbb{E}[W] t+\omega^2t^2/2},\forall t\in\mathbb{R}$.

Consider the following assumptions:
\begin{enumerate}
\item[(A6).] For all $i\in V$, $n\ge n_{ii}\ge\bar n >1$.\label{assumptionA6}
\item[(A7).] $|O^c| = \lceil \eta p^2\rceil $, where $\eta\in[0,1-p^{-1})$.
\item[(A8).] $\exists \alpha\in(0,1]$ such that $\max_{e\in O\cap S^c}\Vert\Gamma_{eS}(\Gamma_{SS})^{-1}\Vert_1\le 1-\alpha$, where $\Gamma=\tilde \Sigma\otimes\tilde \Sigma$ and $S=\{(i,j):\tilde\Theta_{ij}\neq 0\}$.
\item[(A9).] $\sigma_{ij}(m,\varepsilon) $ decreases with $m$.
\item[(A10).] $d,\tilde d\ge 2$.
\end{enumerate}
Assumption~(A6) guarantees that $(i,i)\in O$, for all $i\in V$, and every pair $(i,j)\in O$ has at least two joint observations to compute the empirical covariance $\hat\Sigma_{ij}$. Assumption~(A7) introduces the parameter $\eta\in[0,1)$, which measures the relative size of the set $O^c$ of unobserved pairs of nodes. We will see that, even though a smaller $\eta$ means more observed node pairs, a larger $\eta$ also implies a higher probability of concentration of $\widehat{\tilde\Theta}$ near $\tilde\Theta$, because a larger portion $\Theta_{O^c}$ is set to zero in Equation~(\ref{eq:l1gq}) and is not estimated. Assumption~(A8) is the mutual incoherence condition required in \cite{ravikumar2011high} for the convergence of the graphical lasso, except that here it is imposed on $\tilde\Theta$ rather than $\Theta$. The mutual incoherence condition limits the influence of the pairs of disconnected nodes on the pairs of connected nodes. Assumption~(A9) guarantees that the observed sample covariances concentrate around their target values as the sample size increases. Finally, Assumption (A10) guarantees that $E_O\neq\emptyset$ and $\tilde E_O\neq\emptyset$. \more

\subsubsection{Main theorems}\label{sec:rates}\less
The next theorem establishes the rate of convergence of the \madgqlasso{}  $\widehat{\tilde\Theta}$ as an estimator of the \madgq{} matrix $\tilde\Theta$ in the entrywise $\ell_\infty$-norm:\more\more

\begin{theorem}[\scbf{Convergence rate of \madgqlassobf{}}]\label{theo:convrate}
Let $\hat{\tilde\Theta}$ be the \madgqlasso{} estimator in Equation~(\ref{eq:l1gq}) with $\Lambda_{ij}=\tfrac{8}{\alpha}\sigma(\bar n,p^b)$ for all $(i,j)\in O$, where $b> 2+\tfrac{\log(1-\eta)}{\log p}$. If Assumptions (A6)--(A10) hold, then there exists $\bar n^*>1$ 
(Equation~(\ref{eq:nbarmin})) 
such that, for any $\bar n\ge\bar n^*$, with probability larger than $1-(1-\eta)p^{2-b} $ we have
\begin{equation}\label{eq:gqrateconv}
\Vert\widehat{\tilde\Theta}-\tilde\Theta\Vert_\infty  ~\le~  C\sigma(\bar n,p^b),
\end{equation}
where $\Vert *\Vert_\infty$ is the entrywise $\ell_\infty$-norm and $C$ (Equation~(\ref{eq:constconv})) depends on $\alpha$ and $\Gamma$. \more\more

\end{theorem}
Equation~(\ref{eq:gqrateconv}) specifies a hyper-cubic region centered at $\tilde\Theta$, and the estimator $\hat{\tilde\Theta}$ lies in this region with probability larger than $1-(1-\eta)p^{2-b} $. The size of this region is proportional to the global tail function $\sigma(\bar n,p^b)$ in Equation~(\ref{eq:tailmax}) so, by Assumption~(A9), it decreases with the minimal joint sample size $\bar n$, and it is nondecreasing with the number of nodes $p$ and the user-defined parameter $b$. The probability of concentration $ 1-(1-\eta)p^{2-b}$ decreases with $p$, but increases with $b$ and with $\eta\in[0,1)$, with minimum at $\eta=0$ corresponding 
to the graphical lasso with fully observed data \cite{ravikumar2011high}. This behavior is consistent with the fact that  $\widehat{\tilde\Theta}$ actually estimates only the entries of $\tilde\Theta_O$, while $\widehat{\tilde\Theta}_{O^c}\equiv0$ is trivially an exact estimate of $\tilde\Theta_{O^c}\equiv0$. Finally, note that explicit expressions of the required minimal sample size $\bar n^*$ and of the scalar $C$ are given in Equations~(\ref{eq:nbarmin}) and (\ref{eq:constconv}) in Appendix~\ref{app:proofmainsFiniteSample}, where it can be seen that $\bar n^*$ and $C$ decrease with the incoherence parameter $\alpha\in (0,1]$, and $\bar n^*$ increases with $\tilde d$. To better interpret Theorem~\ref{theo:convrate}, let us consider the special case of sub-Gaussian data:\more
\more\more

\begin{corollary}[\scbf{Convergence rate of \madgqlassobf{} (sub-Gaussian)}]\label{coro:subgaussianrate}
Under the conditions of Theorem~\ref{theo:convrate}, if, for each $i=1,...,p$, the random variable $X^{(j)}_i/\sqrt{\Sigma_{ii}}$ is sub-Gaussian with sub-Gaussianity parameter $\omega>0$, then 
for any $\bar n\ge\bar n_{\rm SG}^*:=\lceil H\tilde d^2(b\log p+\log 4) \rceil$, with probability larger than $1-(1-\eta)p^{2-b} $ we have 
\begin{equation}\label{eq:glassorate}
\Vert\widehat{\tilde\Theta}-\tilde\Theta\Vert_\infty ~\le~ C_{\rm SG} \sqrt{\tfrac{b\log p+\log 4}{\bar n}}
\end{equation}
where $H$ and $C_{\rm SG}$ (Equations~(\ref{eq:subgausssampcomplconst}) and (\ref{eq:CSG})) depend on $\alpha$, $\Gamma$, $\omega$, and $\max_i\Sigma_{ii}$. 
\end{corollary}
We can see that, if the data are sub-Gaussian, in the case where $H$ is constant with respect to $\tilde d$ and $p$, the sample complexity scales with $\tilde d$ and $p$ in the same way as the graphical lasso with full data \cite{ravikumar2011high}, while the probability of concentration is higher when $\eta>0$, as discussed above. This result, indeed, holds also for Gaussian data (case $\omega=1$). The following theorem identifies minimal sample sizes and optimal parameters for Algorithm~\ref{algo:fullrecoveryKn} to recover $E_O$ exactly and the minimal superset of $E_{O^c}$ with high probability:\more

\begin{theorem}[\scbf{GQ Graph recovery (finite samples)}]\label{theo:graphfinite}
Suppose Assumptions~(A6)--(A10) hold, and assume $\delta<\nu/2$. Let $\bar n^*$ be the minimal joint sample size required by Theorem~\ref{theo:convrate}, and let $ \hat{\mathcal{E}}$ be the output edge set of Algorithm~\ref{algo:fullrecoveryKn} with input parameters $\Lambda$ as in Theorem~\ref{theo:convrate}, and $\tau_0$, $\tau$, and $\tau_1$ as indicated below. Then, the following two results hold:
\begin{enumerate}[(i).]
\item {\sc Exact graph recovery in $O$.} If $\bar n\ge\bar n^*_O:=\max\{\bar n^*, \min\{m: C\sigma(m,p^b)< \frac{\nu}{2}-\delta\}\}$ and $\tau\in[\delta_{\bar n,p},~\nu-\delta_{\bar n,p})$, where $C$ is the scalar in Theorem~\ref{theo:convrate} and $\delta_{\bar n,p}:=\delta+C\sigma(\bar n,p^b)$, then, with probability larger than $1-(1-\eta)p^{2-b}$, we have $\hat{\mathcal{E}}_O = E_O$.
\item {\sc Minimal superset graph recovery in $O^c$.} If $\bar n\ge\bar n^{*}_{O^c} := \max\{\bar n^*, \min\{m: D\min\{\sqrt{p+s},\tilde d\}\sigma(m,p^b)< \frac{\min(\psi,\lambda_{\rm min}(\tilde\Theta))}{2}\}\}$, $\tau_0=D\min\{\sqrt{p+s},\tilde d\}\sigma(\bar n,p^b) $, and $\tau_1\in[\delta-\tau_0,~\nu-\tau_0] $, where $D$ is a scalar depending on $C$ and the condition number of $\tilde\Theta$, $s$ is the number of nonzero off-diagonals of $\tilde\Theta$, and $\psi>0$ is defined in Equation~(\ref{eq:psi}), then, under Assumptions~(A4)-(A5), with probability larger than $1-(1-\eta)p^{2-b} $, we have $\hat{\mathcal{E}}_{O^c}=\mathcal{S}_{\rm off}$, where $\mathcal{S}_{\rm off}$ is the minimal superset of $E_{O^c}$ in Equation~(\ref{eq:supersetKoff}).
\more

\end{enumerate}
\end{theorem}
We can see that the minimal sample size $\bar n_O^*$ required for the exact recovery of $E_O$ generally increases with $p$ and $\tilde d$, and decreases with the gap $\nu/2-\delta$. The minimal sample size $\bar n_{O^c}^*$ required for the recovery of the minimal superset of $E_{O^c}$ generally increases with $p$ and $\tilde d$, and decreases with $\psi$ and $\lambda_{\rm min}(\tilde\Theta)$. The optimal value of $\tau_0$ and the intervals of optimal values for $\tau$ and $\tau_1$ approach their population counterparts as $\bar n$ increases. To better interpret Theorem~\ref{theo:graphfinite}, let us consider the sub-Gaussian case:\more
\begin{corollary}[\scbf{GQ Graph recovery (finite samples, sub-Gaussian)}]\label{coro:subgaussiangraph}
Under the conditions of Corollary~\ref{coro:subgaussianrate} and Theorem~\ref{theo:graphfinite}, we have:
\begin{enumerate}[(i).]
\item The exact graph recovery in $O$ established in Theorem~\ref{theo:graphfinite}(i) holds with 
\begin{equation}
\bar n^*_O=\max\left\{\bar n_{\rm SG}^*, \left\lceil C_{\rm SG}^2\tfrac{b\log p+\log4}{(\nu/2-\delta)^2}\right\rceil \right\},~~~~~ \delta_{\bar n,p}=\delta+C_{\rm SG} \sqrt{\tfrac{b\log p+\log 4}{\bar n}},
\end{equation}
where $C_{\rm SG}$ is the scalar in Equation~(\ref{eq:glassorate}). 
\item The minimal superset graph recovery in $O^c$ established in Theorem~\ref{theo:graphfinite}(ii) holds with 
\begin{equation}
\bar n^{*}_{O^c} = \max\left\{\bar n_{\rm SG}^*, \left\lceil\tfrac{4D_{\rm SG}^2\min\{p+s,\tilde d^2\}(b\log p+\log 4)}{\min(\psi^2,\lambda_{\rm min}(\tilde\Theta)^2)}\right\rceil \right\}, ~
\tau_0 =D_{\rm SG}\sqrt{\tfrac{\min\{p+s,\tilde d^2\}(b\log p+\log 4)}{\bar n}},
\end{equation}
where $D_{\rm SG}$ (Equation~(\ref{eq:Dsg})) depends on $C_{\rm SG}$ and the condition number of $\tilde\Theta$.
\end{enumerate}
\end{corollary}
Thus, if the data are sub-Gaussian, provided that the parameters involved in the factors $H,C_{SG}$, and $D_{SG}$ are constant with respect to $\tilde d$ and $p$, then the required minimal sample size $\bar n$ for graph recovery in $O$ and in $O^c$ is proportional to $\tilde d^2 (b\log p +\log 4)$. If $\tilde d$ is a constant or $\tilde d = o\left(\sqrt{\frac{p}{\log p}}\right)$, then we can say that Graph Quilting is also possible in the ``$p\gg\bar n$'' regime.

\section{Simulations}\label{sec:simulations}\less
We now verify the statistical properties of the estimator \madgqlasso{} empirically with an extensive simulation study. In Section~\ref{sim:rates}, we verify the rate of convergence established by Theorem~\ref{theo:convrate}, and in Section~\ref{sim:graph}, we assess the graph recovery performance of the GQ graph estimator produced by Algorithm~\ref{algo:fullrecoveryKn}.
\begin{figure}[t!]
\center
\includegraphics[width=1\textwidth]{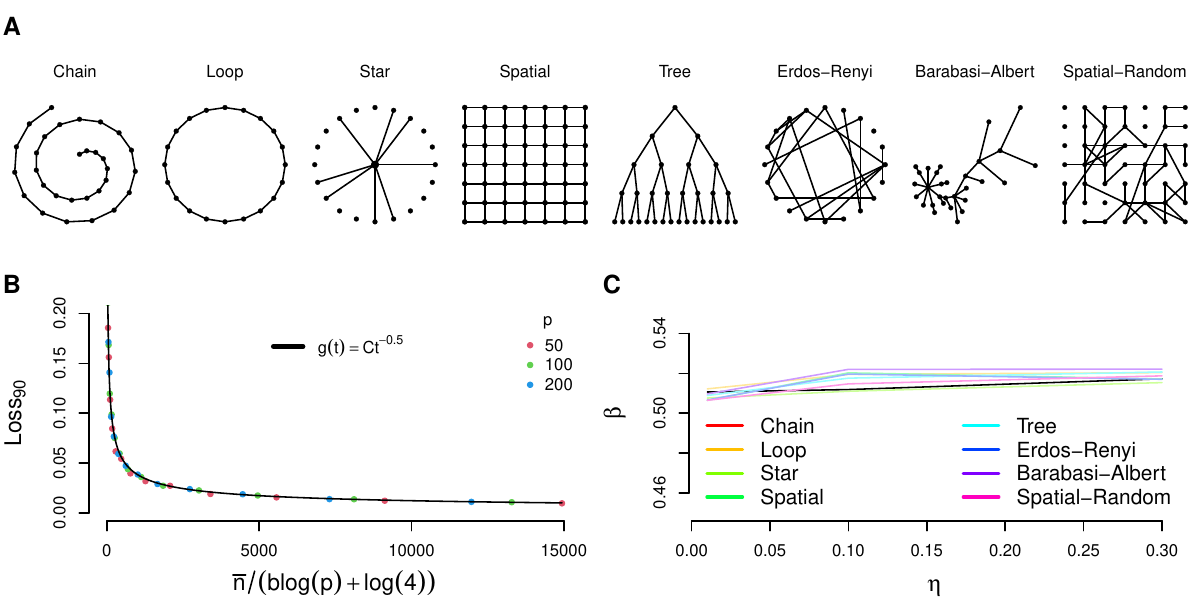}
\caption{Convergence rate of the \madgqlasso{}. {\bf (A)} Classes of graphs used in simulations. {\bf (B)} Ninetieth percentile of $\ell_\infty$ distance between \madgqlasso{} and \madgq{} ($\textsc{Loss}_{90}$) for chain graphs versus rescaled minimum sample size $\bar n/(b\log p+\log 4)$, with proportion of missingness $\eta=.1$ and $p = 50, 100, 200$. All points concentrate near the function $g(t)=Ct^{-\beta}$, with $\beta=1/2$ and some constant $C>0$, in agreement with Equation~(\ref{eq:glassorate}). {\bf (C)} Results of repeated simulation in (B) for various graph structures, where we fit the curve $g(t)=Ct^{-\beta}$ to the computed losses given different levels missingness ($\eta\in (0,0.3)$). In all cases the estimated $\beta$ is slightly larger than $1/2$.}\label{fig:sim1}
\end{figure}

\subsection{Rates of convergence}\label{sim:rates}\less
For a given $p\times p$ precision matrix $\Theta$ with graphical structure belonging to one of the classes illustrated in Figure~\ref{fig:sim1}(A) (see details in Appendix~\ref{app:simdet1}), we generate $M=50$ datasets, each one containing $n$ observations $X^{(1)},..., X^{(n)}\stackrel{\rm i.i.d.}{\sim}N(0,\Theta^{-1})$. Then, for each dataset, we retain data to reflect an observational scheme with $K=3$ subsets of nodes $V_1,V_2,V_3$ (Equation~(\ref{eq:simvk}), Appendix~\ref{app:simdet3}) with missingness proportion $\eta\in (0,1) $, and finally compute $\hat{\tilde\Theta}$ (Equation~(\ref{eq:l1gq})) with oracle penalty parameters (Appendix~\ref{app:simdet2}), and the $\ell_\infty$ distortion $u=\Vert \hat{\tilde\Theta}-\tilde\Theta\Vert_\infty$. In Figure~\ref{fig:sim1}(B) we present the results for the case of a chain graph and $\eta =0.1$. The figure shows the $90$th empirical quantile ($\textsc{Loss}_{90}$) of the computed distortions $u_1,...,u_M$ plotted versus the scaled minimum joint sample size $\bar n/(b\log p+\log 4)$, for a range of sample sizes $500\le n\le 50000$, number of nodes $p\in\{50,100, 200\}$, and $b$ such that concentration of probability (Theorem~\ref{theo:convrate}) is $1-(1-\eta)p^{2-b}=0.9$. Thus, 90\% of the computed distortions $u_1,...,u_M$ are smaller than the displayed points, and, according to Equation~(\ref{eq:glassorate}), we should expect that {\sc Loss}$_{90}\le g(t)=Ct^{-\beta}$, where $t=\bar{n}/(b\log p+\log 4)$, $\beta\approx 1/2$, and $C$ is some constant. Indeed, all displayed points concentrate around $g(t)=Ct^{-1/2}$, where $C>0$ is computed empirically. We repeat this simulation for all classes of graphs in Figure~\ref{fig:sim1}(A) and $\eta\in(0,0.3]$. In Figure~\ref{fig:sim1}(C) we plot the estimated values of $\hat\beta$ versus $\eta$. For any $\eta$, $\hat\beta$ is slightly larger than $1/2$, indeed confirming the rate of convergence in Equation~(\ref{eq:glassorate}).

\subsection{Graph recovery}\label{sim:graph}\less
We now investigate the graph recovery performance of the graph estimator $\hat{\mathcal{E}}$ produced by Algorithm~\ref{algo:fullrecoveryKn}. We consider several scenarios with number of nodes $p=50,100,200$, minimal joint sample sizes $200\le\bar n\le 10,000 $, and missingness proportions $\eta=0.1,0.2$. We quantify the graph quilting recovery performance in terms of the area under the ROC curve (AUC), summarizing the sensitivity and specificity across changes of the input parameters $\Lambda_{ij}=\lambda$ $\forall(i,j)$, $\tau$, $\tau_0$, and $ \tau_1 $ of Algorithm~\ref{algo:fullrecoveryKn}. Figure~\ref{fig:sim2} displays the AUC about the recovery of $E_O$, $E_{O^c}$, and the theoretical superset $\mathcal{S}_{\rm off}$ for an Erd\H{o}s-R\'{e}nyi graph $ER(p,\pi=p^{-1})$ (Appendix~\ref{app:simdet1}). The AUC about the recovery of $E_O$ robustly stays close to 1 for any $p$, $\bar n$, and $\eta$. The AUC about the recovery of $E_{O^c}$ and $\mathcal{S}_{\rm off}$, as expected, degrades with larger $p$ and $\eta$, but steadily increases with $\bar n$. 
\begin{figure}[t!]
\center
\includegraphics[width=1\textwidth]{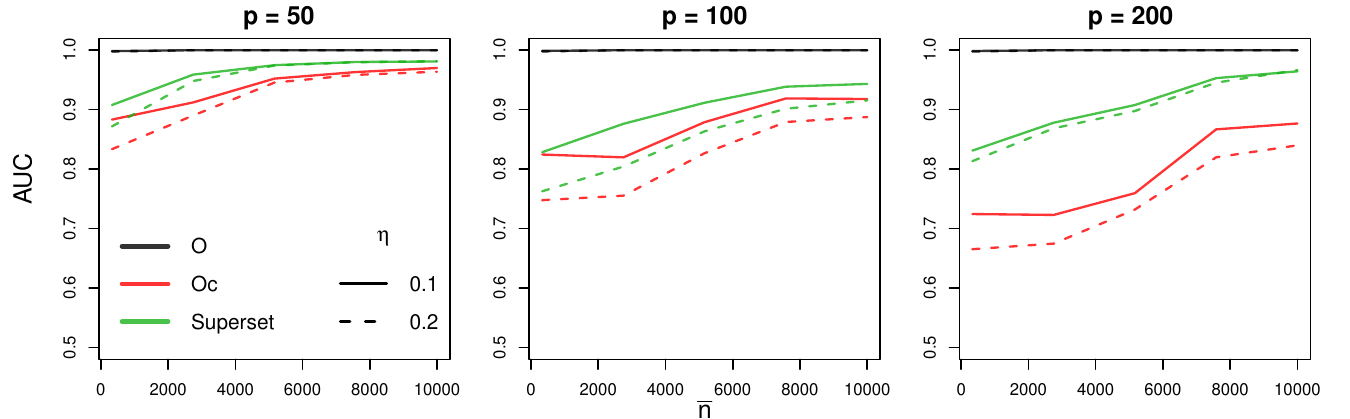}\vspace{-2mm}
\caption{Performance of the graph estimator $\hat{\mathcal{E}}$ produced by Algorithm~\ref{algo:fullrecoveryKn}. We display the AUC in $O$ (black), in $O^c$ (red), and in the superset (green) as functions of the minimal joint sample size $\bar n$ and $\eta$ for Erd\H{o}s-R\'{e}nyi graphs with $p=50, 100, 200$ nodes. The AUC in $O$ robustly stays close to 1 for any number of nodes $p$ and proportion of missingness $\eta$. The AUC in $O^c$ and in the superset degrade with $p$ and $\eta$ but steadily increases with $\bar n$.}\vspace{-3mm}
\label{fig:sim2}
\end{figure}

\section{Neuronal functional connectivity estimation from nonsimultaneous calcium imaging recordings}\label{sec:data}\less
To illustrate our methods with real data, we consider the massive publicly available data set of \cite{stringer2019spontaneous} consisting of calcium activity traces recorded from about 10,000 neurons in a cubic portion of mouse visual cortex (70--385$\mu$m depth). These neurons were simultaneously recorded  {\it in vivo} using 2--photon imaging of GCaMP6s with 2.5Hz scan rate \cite{pachitariu2017suite2p}, while the animal was free to run on an air-floating ball in total darkness for 105 minutes. 

In Figure~\ref{fig:data1}(A) we display the neurons' spatial positions occupying a 1mm $\times$ 1mm $\times$ 0.5mm 3-dimensional space, and the functional connections (see Section~\ref{sec:appliedcontexts}) recovered with the graphical lasso (Glasso) based on the full data (5,000 edges for illustration). As explained in Section~\ref{sec:appliedcontexts}, because of technology limitations, it is often preferred to record the activities of a subset of neurons at once with a finer temporal resolution rather than recording the activities of the entire neuronal population simultaneously with a coarse time resolution. This is particularly necessary when we record neuronal activities from very large numbers of neurons.  In Figure~\ref{fig:data1}(B) we illustrate a possible observational scheme where three subsets of neurons are recorded over separate experimental sessions, i.e. {\it nonsimultaneously}, generating the Graph Quilting problem with set $O$ depicted in Figure~\ref{fig:data1}(C). In Figure~\ref{fig:data1}(D) we summarize the performance of the \madgqlasso{} graph estimator $\hat{\mathcal{E}}$ (Algorithm~\ref{algo:fullrecoveryKn}) at recovering the graph that would be obtained from full data via Glasso.  We randomly select 2000 neurons and, assuming the observational scheme in (B), we drop data from the 105-minute recordings in a way that each of the three subsets of neurons is roughly recorded for 105/3 = 35 minutes. We compute the \madgqlasso{} graph estimate $\hat{\mathcal{E}}$ for different numbers of edges and proportion of missingness $\eta$ (by varying size of each neuronal subset), and assessed the similarity of the graph to Glasso (full data) in terms of area under the ROC curve (AUC) as a function of number of edges and missingness proportion $\eta$. The \madgqlasso{} estimator $\hat{\mathcal{E}}$ appears to reasonably recover similar graph structures as Glasso although, as expected, larger numbers of edges and missingness proportion negatively affect the graph quilting recovery.
\begin{figure}[t!]
\center
\includegraphics[width=1\textwidth]{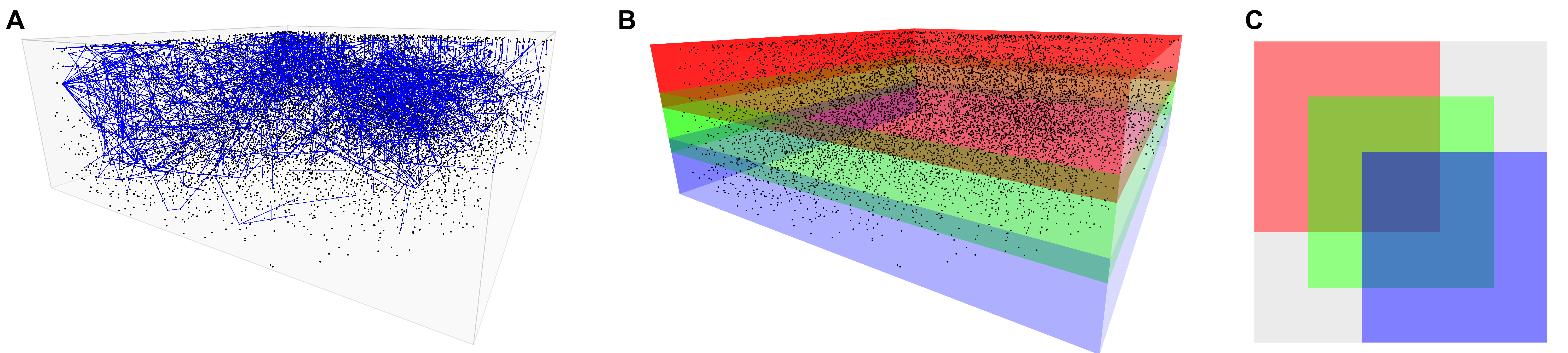}\vspace{-2mm}\\
\includegraphics[width=1\textwidth]{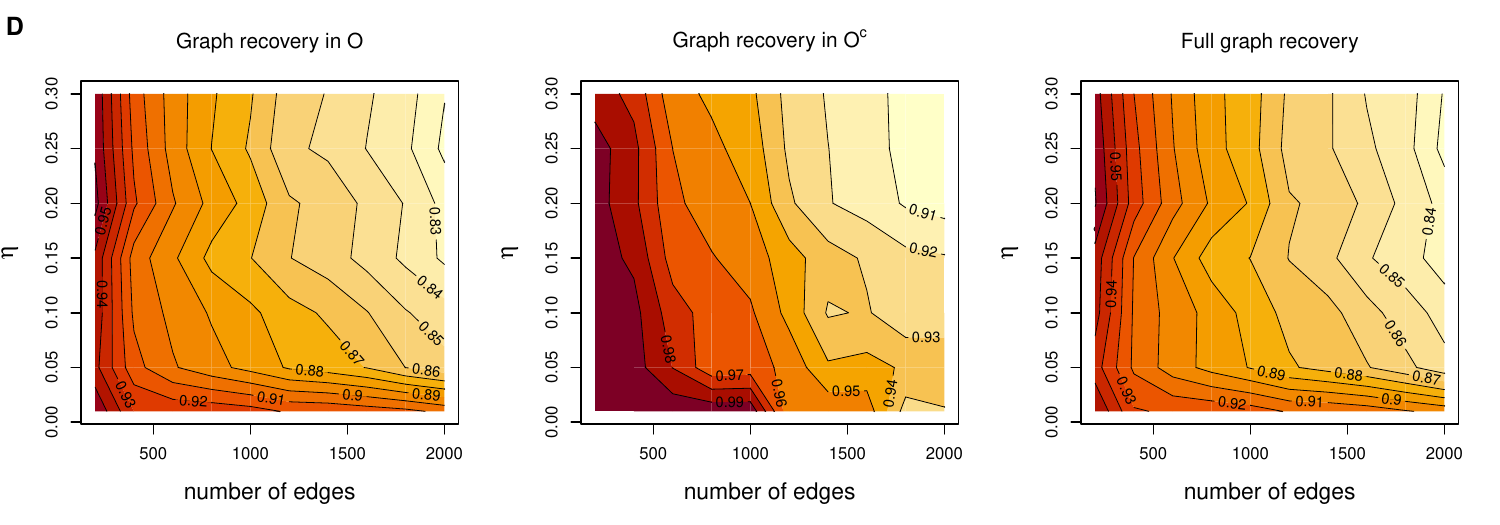}\vspace{-4mm}
\caption{(A) Brain cube functional connectivity network of 9,036 neurons in mouse visual cortex estimated from full data for a given number of edges equal to 5,000. (B) Example of nonsimultaneously recorded subsets of the brain cube. (C) Jointly observed pairs of neurons given scheme in B ($\eta\approx .2$). (D) Similarity between Glasso (full data) and \madgqlasso{} graph $\hat{\mathcal{E}}$ (Algorithm~\ref{algo:fullrecoveryKn}; incomplete data) for a random subset of 2000 neurons, in terms of area under the ROC curve (AUC) and as a function of total number of edges and missingness proportion $\eta$.}\label{fig:data1}
\end{figure}

\section{Discussion}\less
This paper has introduced a new, challenging statistical problem called {\it Graph Quilting} in which we seek to perform graphical model selection when parts of the observed covariance are completely missing, meaning many pairs of variables have no joint observations.  We characterize this new problem and introduce a simple methodological solution: partial sparse likelihood estimation via the \madgqlasso{}.  We show that our approach enjoys surprisingly strong statistical guarantees: under certain assumptions the thresholded \madgqlasso{} can perfectly recover the graph amongst the observed variable pairs; even though the graph amongst the unobserved variable pairs is not identifiable, the \madgqlasso{} plus clever use of Schur complements can recover a minimal superset of edges in this setting.  

Our work has a number of important implications.  First, we are the first to characterize a problem that seems all but impossible at first glance.  We also highlight several real-world applications of this problem and propose a simple solution.  This might inspire others to work on methodological and theoretical solutions to the Graph Quilting problem as well as apply our approach to learn graphs in neuroscience, genomics, finance, and other areas.  As we discuss and illustrate via an empirical example, Graph Quilting will be especially important for learning functional neural connectivity in large-scale calcium imaging studies with non-simultaneous recordings.  Second, our approach and the theory we develop for thresholding the \madgqlasso{} reveals new insights for graph learning with latent variables.  While \cite{wang2021thresholded} has already run with such insights based on a preprint of our work, there are likely many other fruitful directions to explore related to graph thresholding and latent variables.  Finally, our approach to learning the minimal superset of edges in $O^c$ is based on the important observation that distortions in the \madgqlasso{} result from missing members in a node's neighborhood that have other alternate connected paths through the observed part of the graph.  This insight has implications for graph learning broadly with missing and latent variables but also potentially for causal discovery in the presence of missing confounders \cite{bernstein2020ordering}.

Our work on Graph Quilting also opens the door to many possible extensions and new research directions.  First in Section~\ref{sec:gqprobl}, we highlight three possible methodological approaches for the Graph Quilting problem but only explored one of these options in this paper.  It may prove fruitful for others to explore covariance completion or observed likelihood approaches to Graph Quilting in future work. Next, the graph quilting finite sample theory may be investigated under different settings that allow for the analysis of the effects of uneven sample sizes on different parts of the graph \cite{zheng2022graphical}.  Additionally, we present an approach for learning the minimal superset of edges in $O^c$, but there are perhaps ways to leverage additional information about the graph (e.g. sparsity, hubs, cliques, motifs, graph structures)  to find the most likely set of edges in $O^c$.  This paper also focused solely on graphical model selection or structural learning, but there are possibilities of leveraging recent graph inference approaches \cite{casanellas2021robust, dasarathy2014data, dasarathy2022stochastic, katiyar2019robust, liu2013gaussian, tandon2021sga, zahin2022robust} in the context of Graph Quilting that can reflect the uncertainties associated with learning various parts of $O$ and $O^c$.  This paper also focuses on sparse inverse covariance learning, including the Gaussian Graphical Model, but one could consider the Graph Quilting problem with other types of parametric \cite{yang2015graphical} or non-parametric \cite{liu2012high} families of graphical models or even in the context of learning directed acyclic graphs.  Finally, our work considers graph learning with fixed observation sets, but one could possibly leverage our approach to adaptively learn the graph structure \cite{dasarathy2019gaussian,dasarathy2016active,eriksson2011active} by using our estimate to sequentially inform which sets of variables to measure next.  

Overall, we have proposed a completely new and challenging statistical problem we call Graph Quilting and proposed a sound methodological solution with strong theoretical guarantees.  Our work will have immediate implications for several applications, such as neuroscience and genomics, where the Graph Quilting problem naturally arises.  But, it will also inspire many possible directions for future research in graph learning.

\section*{Funding}\less
Giuseppe Vinci was supported by NSF NeuroNex-1707400, Rice Academy Postdoctoral Fellows, and Dan L. Duncan Foundation. Genevera Allen was supported by NSF NeuroNex-1707400, NIH 1R01GM140468, and
NSF DMS-2210837. Gautam Dasarathy was supported by the NIH1R01GM140468 and NSF CCF-2048223.

%\begin{supplement}
%\stitle{Supplement to ``Graph Quilting: Graphical model selection from partially observed covariances.''} \sdescription{Appendix containing proofs and additional results.}
%\end{supplement}

\bibliographystyle{imsart-nameyear}
\bibliography{BIB.bib}

\newpage

\appendix

\section{Proofs of Main Results}\label{app:proofmains}
This appendix contains the proofs of theorems, lemmas, and corollaries of Sections~\ref{sec:gqprobl}--\ref{sec:estimation}.

\subsection{Characterization of the Graph Quilting Problem}

\begin{proof}[Proof of Theorem~\ref{thm:identifiability}~ (Graph Identifiability)]
We will first prove the \emph{necessity} for $E\subseteq O$ by demonstrating a problematic example. Consider the following $3\times 3$ (valid) covariance matrices 
\begin{align}
\Sigma = \left[\begin{matrix}1 & a & a^2\\
a & 1 & a \\ 
a^2 & a & 1
\end{matrix}\right], \qquad \breve{\Sigma} = \left[\begin{matrix}1 & a^3 & a^2\\
a^3 & 1 & a \\ 
a^2 & a & 1
\end{matrix}\right], \qquad 0< \left| a \right| < 1, 
\end{align}
and suppose $O^c=\big\{(1,2),(2,1)\big\}$. That is, we suppose that we observe all covariances but for the one between the variables $X_1$ and $X_2$. Notice that $\Sigma_O = \breve{\Sigma}_O$. The inverses of these matrices are respectively given by 
\begin{align}
\Theta := \Sigma^{-1}=\left[\begin{matrix}\frac{1}{1 - a^2} & \frac{a}{a^2 - 1} & 0\\
\frac{a}{a^2 - 1} & \frac{a^2 + 1}{1 - a^2} & \frac{a}{a^2 - 1}\\
0 & \frac{a}{a^2 - 1} & \frac{1}{1 - a^2}
\end{matrix}\right], ~~~~~ \breve{\Theta} :=\breve{\Sigma}^{-1}= \left[\begin{matrix}\frac{1}{1 - a^4} & 0 & \frac{a^2}{a^4 - 1}\\
0 & \frac{1}{1 - a^2} & \frac{a}{a^2 - 1}\\
\frac{a^2}{a^4 - 1} & \frac{a}{a^2 - 1} & \frac{a^4 + a^2 + 1}{1 - a^4}
\end{matrix}\right].
\end{align}
Both these matrices correspond to different graphs that only have three edges. Thus, simply knowing the number of non-zeros in $\Theta$ is insufficient to distinguish between the two cases. Therefore, it would be impossible to identify the edge set $E$ induced by $\Theta$ from $\Sigma_O$ alone, because the value of the unobserved portion $\Sigma_{O^c}$ is pivotal to the graph. This example can of course be generalized by embedding the above matrix as a principal sub-matrix in a larger covariance matrix, or by simply considering a similar banded Toeplitz matrix as a precision matrix. 

To show the {\it sufficiency} part of the theorem, we begin by observing that a positive definite completion of $\Sigma_O$ exists since $\Sigma$ is positive definite by assumption. Thus, the max-determinant completion of $\Sigma_O$ (Equation~(\ref{eq:maxdet})) has a unique solution $\tilde\Sigma=\tilde\Theta^{-1}$ where $\tilde{\Theta}_{O^c} \equiv 0$ and $[\tilde{\Theta}^{-1}]_O = \Sigma_O$. On the other hand $[\Theta^{-1}]_O = \Sigma_O$, and since $ E\subseteq O$, we also have $\Theta_{O^c} \equiv 0$. Therefore $\tilde{\Theta} = \Theta$. 
\end{proof}

\vspace{0.5mm}

\begin{proof}[Proof of Lemma~\ref{lemma:maxdetopt}]
The solution $\tilde\Theta = \tilde\Sigma^{-1}$ to Equation~(\ref{eq:gq0}) is uniquely identified by the constraints $\det\tilde\Theta>0 \Leftrightarrow \det\tilde\Sigma>0$ and $\tilde\Theta_{O^c} = [\tilde\Sigma^{-1}]_{O^c}= 0$, and by its first order condition $[\tilde\Theta^{-1}]_{O} - \Sigma_O = 0 ~~\Leftrightarrow ~~ \tilde\Sigma_{O} = \Sigma_O$. The solution $\tilde\Sigma$ to Equation~(\ref{eq:maxdet}) is uniquely identified by the constraints $\det\tilde\Sigma>0$ and $\tilde\Sigma_O = \Sigma_O$, and by its first order condition $[\tilde \Sigma^{-1}]_{O^c} = 0$. Hence, Equations~(\ref{eq:gq0}) and (\ref{eq:maxdet}) are equivalent.
\end{proof}

\vspace{0.5mm}

\begin{proof}[Proof of Theorem~\ref{theo:nofnd} (No False Negatives in $O$)] First of all, let us define the set
 $\mathcal{S}^{++}_E:=\left\{M\succ 0:~M_{ij}\neq 0 \Leftrightarrow (i,j)\in E\right\}$ of $p\times p$ positive definite matrices with graphical structure $E$. Notice that $\mathcal{S}^{++}_E$ has dimensionality $p+|E|/2$. Let $\nu_E$ be the Lebesgue measure defined on the sigma-algebra of subsets of $\mathcal{S}^{++}_E$. For a given observed entry set $O$, let $\mathcal{N}_{hk}=\{\Theta\in\mathcal{S}^{++}_E:~\Theta_{hk}\neq 0, \tilde\Theta_{hk}=0\}$ be the set of precision matrices $\Theta$ with graphical structure $E$ and such that the \madgq{} matrix $\tilde\Theta$ obtained from $\Sigma_O=\left[\Theta^{-1}\right]_O$ is zero over the entry $(h,k)$ while $\Theta_{hk}\neq 0$. Clearly, $\mathcal{N}_{hk}=\emptyset$ if $(h,k)\notin E$ and $\mathcal{N}_{hk}\equiv\mathcal{S}^{++}_E$ for any $(h,k)\in E_{O^c}$. Since we are interested in the false negatives in $\tilde\Theta_O$, suppose $(h,k)\in E_O$, so we can rewrite $\mathcal{N}_{hk}=\{\Theta\in\mathcal{S}^{++}_E:~\tilde\Theta_{hk}=0\}$. 
Because of the constraint $ \tilde\Theta_{hk}=0$, and the fact that $\tilde\Theta$ is a 1:1 bicontinuous function (homeomorphism) of $\Sigma_O=\left[\Theta^{-1}\right]_O$  (Lemma~\ref{lemma:madcont}), the set $\mathcal{N}_{hk}$ is a low-dimensional manifold embedded in $\mathcal{S}^{++}_E$. Thus, $\nu_E(\mathcal{N}_{hk})=0$, for all $(h,k)\in E_O$, and by sub-additivity
\begin{eqnarray*}
\nu_E\left(\left\{\Theta\in\mathcal{S}^{++}_E:\exists (h,k)\in E_O, \tilde\Theta_{hk}=0\right\}\right) &=& \nu_E\left(\bigcup\limits_{(h,k)\in E_O}\mathcal{N}_{hk}\right) \le \sum\limits_{(h,k)\in E_O}\nu_E\left(\mathcal{N}_{hk}\right)=0
\end{eqnarray*}
i.e. $E_O\subseteq \tilde E_O$ almost everywhere. 
This result holds for any graphical structure $E$.
\end{proof}

\vspace{0.5mm}

\begin{proof}[Proof of Lemma~\ref{lemma:exactO} (Exact graph recovery in $O$)] 
Notice that, by definition of $\nu$ in Equation~(\ref{eq:nu}), we have $|\Theta_{ij}|\notin (0,\nu)$, for any $(i,j)\in O$, $i\neq j$. Thus, if $\delta<\frac{\nu}{2}$, we have
\begin{eqnarray}
\Theta_{ij}=0&&~\Longleftrightarrow~~-\delta\le|\tilde\Theta_{ij}|\le\delta  \\
\Theta_{ij}>0&&~\Longleftrightarrow~~\Theta_{ij}\ge\nu ~~\Longleftrightarrow~~\tilde\Theta_{ij}\ge\nu-\delta  \\
\Theta_{ij}<0 &&~\Longleftrightarrow~~\tilde\Theta_{ij}\le-(\nu-\delta) ~~\Longleftrightarrow~~  \Theta_{ij}\le-\nu
\end{eqnarray}
where $\nu-\delta>\delta$. 
Therefore, for any $\tau\in[\delta,\nu-\delta)$,
\begin{equation}
    \tilde E_O^\tau ~:=~ \{(i,j)\in O:i\neq j,|\tilde\Theta_{ij}|>\tau\} ~=~ \left\{(i,j)\in O:i\neq j,\Theta_{ij}\neq 0\right\} ~=:~E_O,
\end{equation}
and for any $(i,j)\in E_O$,
\begin{equation}
    {\rm sign}(\tilde\Theta_{ij}) ~\Longleftrightarrow~  {\rm sign}(\Theta_{ij})
\end{equation}
To demonstrate the necessity part of the theorem, simply note that $\Theta_{ij}=0 \Longrightarrow \tilde\Theta_{ij}\in [-\delta,\delta]$ and $\Theta_{ij}=\nu \Longrightarrow \tilde\Theta_{ij}\in [\nu-\delta,\nu+\delta]$, so if $\delta\ge\nu/2$ we have that in the case where
\begin{equation}
\tilde\Theta_{ij}\in [-\delta,\delta]\cap [\nu-\delta,\nu+\delta] =[\nu-\delta,\delta]\neq\emptyset
\end{equation}
it is impossible to infer whether $\Theta_{ij}= 0$ or not based on $\tilde\Theta_{ij}$, so no threshold $\tau$ would guarantee the exact identification of the true edges in $O$.
\end{proof}

\vspace{5mm}

\subsection{Graph Recovery: Population Analysis}\label{app:graphrecoverypop}

\begin{proof}[Proof of Theorem~\ref{theo:popO} (\sc{Exact Graph Recovery in $O$})] 
Lemma~\ref{lemma:madcont} shows that for a given $\Theta_O$, we have  $\delta\le \bar\delta(\gamma)$, where $\bar\delta$ is a continuous function of $\gamma=\Vert\Theta_{O^c}\Vert_\infty$. Since $\bar\delta(0)=0$, we have that for every $b>0$, there exists $c>0$ such that $\bar\delta(\gamma)<b$ for any $\gamma<c$. Thus, if we pick $c=\inf\left\{x>0:\bar\delta(x)\ge \nu/2\right\} $, $\gamma<c$ guarantees $\delta<\nu/2$, and  Lemma~\ref{lemma:exactO} applies. 
\end{proof}

\vspace{0.5mm}

\begin{proof}[Proof of Corollary~\ref{coro:popOABC}(\sc{Exact Graph Recovery in $O$ (special case $K=2$))}]
See proof of Theorem~\ref{theo:popOABCapp} in Appendix~\ref{app:abcproofs}.
\end{proof}

\vspace{0.5mm}

\begin{proof}[Proof of Corollary~\ref{coro:poplvgm} (Latent variable graphical model)] 
The result is an application of Corollary~\ref{coro:popOABC} limited to the portion $A\times A$, where the condition $\delta<\nu/2$ is guaranteed by $\gamma<\sqrt{\frac{\nu\lambda_{\rm min}}{2d_{O^c}^2}}$ (Assumption (A2), with $B=\emptyset$).
\end{proof}

\begin{proof}[Proof of Lemma~\ref{lemma:schurgen} (\madgq{} Entanglement)]
Let $\Theta\in\mathcal{S}^{++}_p $ be a positive definite matrix with inverse $\Sigma=\Theta^{-1}$. For any node set $U\subset V=\{1,...,p\}$, the Schur complement of the block $\Theta_{U^cU^c}$ is given by 
$
\Sigma_{UU}^{-1}=\Theta_{UU}-\Theta_{UU^c}\Theta_{U^cU^c}^{-1}\Theta_{U^cU}
$. 
If $\tilde\Theta\in\mathcal{S}^{++}_p $ has inverse $\tilde\Sigma=\tilde\Theta^{-1}$ such that $\Sigma_O=\tilde\Sigma_O$, then for any $U\subset V$ such that $U\times U\subseteq O$, we have $\Sigma_{UU}^{-1}=\tilde\Sigma_{UU}^{-1}$ and so
$
\Theta_{UU}-\Theta_{UU^c}\Theta_{U^cU^c}^{-1}\Theta_{U^cU}=\tilde\Theta_{UU}-\tilde\Theta_{UU^c}\tilde\Theta_{U^cU^c}^{-1}\tilde\Theta_{U^cU}
$.
\end{proof}

\vspace{0.5mm}

\begin{proof}[Proof of Theorem~\ref{theo:distprK} (Distortion Propagation)] First, note that $V_k\times V_k\subseteq O$, so by Lemma~\ref{lemma:schurgen}, 
\[
\Theta_{V_kV_k}-\Theta_{V_kV_k^c}\Theta_{V_k^cV_k^c}^{-1}\Theta_{V_k^cV_k} ~=~ \Sigma_{V_kV_k}^{-1} ~=~  \tilde\Theta_{V_kV_k}-\tilde\Theta_{V_kV_k^c}\tilde\Theta_{V_k^cV_k^c}^{-1}\tilde\Theta_{V_k^cV_k}.
\]
Thus, for $(i,j)\in V_k\times V_k$,
\[
\Theta_{ij}-\Theta_{iV_k^c}\Theta_{V_k^cV_k^c}^{-1}\Theta_{V_k^cj} ~=~ \left[\Sigma_{V_kV_k}^{-1}\right]_{ij} ~=~  \tilde\Theta_{ij}-\tilde\Theta_{iV_k^c}\tilde\Theta_{V_k^cV_k^c}^{-1}\tilde\Theta_{V_k^cj}~=:~\tilde\Theta^{(k)}_{ij}
\]
and $\delta^{(k)}_{ij}=\Theta_{ij}-\tilde\Theta^{(k)}_{ij}=\Theta_{iV_k^c}\Theta_{V_k^cV_k^c}^{-1}\Theta_{V_k^cj}$. 
We now prove each part of the theorem.
\paragraph*{(i)} For any $k$ such that $i\in V_k$, we have $H_i\subseteq V_k^c$. Thus, if $\Theta_{iH_i}\neq 0$, then $\Theta_{iV_k^c}\neq 0 $ and $\delta^{(k)}_{ii}=\Theta_{iV_k^c}\Theta_{V_k^cV_k^c}^{-1}\Theta_{V_k^ci}>0 $ since $\Theta_{V_k^cV_k^c}^{-1}\succ 0$. If $V_k^c=H_i$, then $\Theta_{iH_i}=\Theta_{iV_k^c}$, so $\delta^{(k)}_{ii}>0$ if and only if $\Theta_{iH_i}\neq 0$.

\paragraph*{(ii)} First of all, notice that the distortion $\delta_{ij}^{(k)}$ can be written as
\begin{equation}\label{eq:deltaindecomp}
\delta^{(k)}_{ij} ~=~ \Theta_{iV_k^c}\Theta_{V_k^cV_k^c}^{-1}\Theta_{V_k^cj} ~~=\sum_{h\in V_k^c}\sum_{l\in V_k^c}\Theta_{ih}\left[\Theta_{V_k^cV_k^c}^{-1} \right]_{hl}\Theta_{lj}
\end{equation}
This expression shows that $\delta_{ij}^{(k)}\neq 0$ if at least one of the addends of the sum is nonzero, except in cases where more than one addend is nonzero and cancellations happen; these cases, however, form a zero volume set in the space of precision matrices with a given graphical structure $E$, making our statements below  hold almost everywhere. We can see that $\Theta_{ih}\neq 0$ if $h\in N_{H_i}(i)$, and $\Theta_{lj}\neq 0$ if $l\in N_{H_i}(j)$, where $N_{H_i}(i)\subseteq N_{V_k^c}(i)\subseteq V_k^c$ and $N_{H_i}(j)\subseteq N_{V_k^c}(j)\subseteq V_k^c$. Moreover, if $h=l$, then $\left[\Theta_{V_k^cV_k^c}^{-1} \right]_{hl}\neq 0$ because the matrix $\Theta_{V_k^cV_k^c}^{-1}$ is positive definite. Alternatively, if $h$ and $l$ are $H_i$-connected, then they are also $V_k^c$-connected since $H_i\subseteq V_k^c$, so $\left[\Theta_{V_k^cV_k^c}^{-1} \right]_{hl}\neq 0$ almost everywhere. Therefore, if $\exists h\in N_{H_i}(i)$ and $\exists l\in N_{H_i}(j)$ such that $h=l$ or $h$ is $H_i$-connected to $l$, then $\delta_{ij}^{(k)}\neq 0$ almost everywhere.

On the other hand, if $\delta_{ij}^{(k)}\neq 0$, then at least one addend in the sum in Equation~(\ref{eq:deltaindecomp}) must be nonzero, i.e. there must be some pair $(h,l)\in V_k^c\times V_k^c$ such that $\Theta_{ih}\left[\Theta_{V_k^cV_k^c}^{-1} \right]_{hl}\Theta_{lj}\neq 0$ or, equivalently, $\Theta_{ih}\neq 0$, $\Theta_{lj}\neq 0$, and $\left[\Theta_{V_k^cV_k^c}^{-1} \right]_{hl}\neq 0$. Therefore, in the case where $H_i=V_k^c$,  we have that if $\delta_{ij}^{(k)}\neq 0$ then $\exists h\in N_{H_i}(i)$ and $\exists l\in N_{H_i}(j)$ such that $h=l$ or $h$ is $H_i$-connected to $l$. Note that this result actually holds for all positive definite precision matrices, not just almost everywhere.

\paragraph*{(iii)} If $\delta^{(k)}_{ij} ~=~ \Theta_{iV_k^c}\Theta_{V_k^cV_k^c}^{-1}\Theta_{V_k^cj}\neq 0$, then we must have $\Theta_{iV_k^c}\neq 0$ and $\Theta_{V_k^cj}\neq 0$, which implies $\delta_{ii}^{(k)}=\Theta_{iV_k^c}\Theta_{V_k^cV_k^c}^{-1}\Theta_{V_k^ci}>0$ and $\delta_{jj}^{(k)}=\Theta_{jV_k^c}\Theta_{V_k^cV_k^c}^{-1}\Theta_{V_k^cj}>0$ since $\Theta_{V_k^cV_k^c}^{-1}\succ 0$.
\end{proof}

\more

\begin{proof}[Proof of Corollary~\ref{coro:distpropagK}]
Let $(i,j)\in O^c$ where, without loss of generality,
$i<j$. \less
\paragraph*{(i)} If $\Theta_{ij}\neq 0$, then  $\Theta_{iH_i}\neq 0$ and $\Theta_{jH_j}\neq 0$, which imply, respectively, $\delta^{(k)}_{ii}\neq 0$ and $\delta^{(h)}_{jj}\neq 0$ for all $k,h$ such that $i\in V_k$ and $j\in V_h$, as per Theorem~\ref{theo:distprK}, part {\it (i)}.
\paragraph*{(ii)} If $\Theta_{ij}\neq 0$, then $j\in N_{H_i}(i)\neq\emptyset$, and if $s\in V_k\setminus \{i\}$ is $(H_i\cup \{s\})$-connected to $j$, then $\exists l\in N_{H_i}(s)\neq\emptyset$ such that either $l=j$ or $l$ is $H_i$-connected to $j$. Thus, by Theorem~\ref{theo:distprK}, part {\it (ii)}, we have $\delta^{(k)}_{is}\neq 0$ almost everywhere.  
\end{proof}

\more

\begin{proof}[Proof of Theorem~\ref{theo:OcK} (\sc{Oracle Minimal Superset of $E_{O^c}$})]~ \less
\paragraph*{(i)} First, we prove that $\mathcal{S}_{\rm diag}$ in Equation~(\ref{eq:supersetK}) enjoys property (i) of a minimal superset (Definition~\ref{def:minimalityK}): $\forall\Sigma'\in\mathcal{A}(\Sigma,O,Q)$ we have $E_{O^c}'\subseteq\mathcal{S}_{\rm diag}$, where $Q$ is the set of diagonals $\{(i,i,k):i\in V,k=1,...,K\}$. We prove this by contradiction. Suppose there exists $\Sigma'\in\mathcal{A}(\Sigma,O,Q)$ such that $E_{O^c}'\cap \mathcal{S}^c_{\rm diag} \neq \emptyset$,  
i.e. such matrix $\Sigma'$ induces one or more edges in $O^c$ and outside of $\mathcal{S}_{\rm diag}$. So, if $\exists(h,l)\in E'_{O^c}\cap\mathcal{S}_{\rm diag}^c$, where $h<l$, then we must have $h\notin \{i\in V: (i,j)\in\mathcal{S}_{\rm diag},i<j\}$ and/or $l\notin \{i\in V: (i,j)\in\mathcal{S}_{\rm diag},i>j\}$. Without loss of generality, suppose it is the case where $h\notin \{i\in V: (i,j)\in\mathcal{S}_{\rm diag},i<j\}$. However, since $(h,l)\in E_{O^c}'$, then Theorem~\ref{theo:distprK} and Corollary~\ref{coro:distpropagK} guarantee that there must be a distortion in the diagonal entry relative to node $h$ of all \madgq{} Schur complements involving node $h$, i.e. $\delta_{hh}^{(k)}> 0$ for all $k$ such that $i\in V_k$. Thus $h$ must be in the node set $\{i\in V: (i,j)\in\mathcal{S}_{\rm diag},i<j\}$, which is a contradiction.

We now prove that $\mathcal{S}_{\rm diag}$ enjoys property (ii) of a minimal superset (Definition~\ref{def:minimalityK}): $\forall\mathcal{S}' \subsetneq \mathcal{S}_{\rm diag}$, $\exists\Sigma'\in\mathcal{A}(\Sigma,O,Q)$ such that $E'_{O^c} \cap (\mathcal{S}_{\rm diag}\setminus \mathcal{S}') \neq \emptyset$. Let us first consider the case of $K=2$ observed blocks. For simplicity, let $V_1=A\cup B$ and $V_2=B\cup C$, where $A,B,C$ is a partition of $V$. In this special case the superset has the form $\mathcal{S}_{\rm diag}=(\tilde A\times \tilde C)\cup (\tilde C\times \tilde A)$, where $\tilde A\subseteq A$ and $\tilde C\subseteq C$. Consider the following optimization problem
\begin{equation*}
    T(S,\tau) ~=~ \underset{T\succ 0, ~T_{O^c}=Z_\tau}{\arg\max} ~\log\det  T - \sum_{(i,j)\in O}T_{ij}\Sigma_{ij} \end{equation*}
where $Z_\tau$ is an entry set that is zero everywhere except over the symmetric set $S\subseteq O^c$ where all entries have value $\tau\in\mathbb{R}$. We have that $T(S,0)$ equals the \madgq{} matrix $\tilde\Theta$, which is guaranteed to exist (Lemma~\ref{lemma:maxdetopt}). For any $S\subseteq O^c$ and a sufficiently small $|\tau|\neq 0$, also the solution $T(S,\tau)$ exists and is uniquely identified by the constraints $\det T>0$ and $T_{O^c} = Z_\tau$, and by the first order condition  $[T^{-1}]_{O} = \Sigma_O $. Thus, the precision matrix $\Theta'=T(\mathcal{S}_{\rm diag},\tau)$ satisfies  $[\Theta^{'-1}]_O = \Sigma_O$ and $E'_{O^c}=\mathcal{S}_{\rm diag}$, and thereby  $\mathcal{D_{\rm diag}}(\Sigma',O)=\mathcal{D}_{\rm diag}(\Sigma,O)$ by Theorem~\ref{theo:distprK}, since in this case $K=2$ we have a distortion $\tilde\Theta'_{ii}\neq\Theta'_{ii}$ if and only if $\Theta'_{iC}\neq 0$ or $\Theta'_{Ai}\neq 0$ (indeed, we have $H_i=V_1^c$ for all $i\in A$ and $H_i=V_2^c$ for all $i\in C$, so the condition in Equation~(\ref{eq:deltaii}) is sufficient and necessary). Hence, $\Sigma'\in\mathcal{A}(\Sigma,O,Q)$. This shows that the case where $E_{O^c}=\mathcal{S}_{\rm diag}$ is possible, that is, there is no set $\mathcal{S}'\subsetneq \mathcal{S}_{\rm diag}$ that could contain all plausible edge sets in $O^c$.  

In the case with $K>2$ observed sets of nodes, we can either be in the case where the distorted Schur complement diagonals all happen in correspondence of edges in $O^c$ (i.e. only the diagonals of nodes that are incident with some edge in $O^c$ are distorted), or in the case where there is at least one $i$ such that $\delta^{(k)}_{ii}>0$ is only due to edges in $O$, specifically edges in the set $(\{i\}\times V_k^c)\cap O$ for all $k$ such that $i\in V_k$. In the first case, every row and every column of $\mathcal{S}_{\rm diag}$ contains at least one true edge. In the second case, some rows or columns of $\mathcal{S}_{\rm diag} $ may contain no true edge. However, even if we knew that some of the node pairs in  $(\{i\}\times V_k^c)\cap O$ were connected, we would still not be able to exclude the possibility of the existence of edges in the portion $\{i\}\times H_i$. Hence, the two situations are in general indistinguishable based on the diagonal distortions $\mathcal{D}_{\rm diag}(\Sigma,O)$, and it is possible to find families of matrices $\Sigma\succ 0$ such that $\Sigma'\in\mathcal{A}(\Sigma,O,Q)$ have $E'_{O^c}=\mathcal{S}_{\rm diag}$ and $\mathcal{D}_{\rm diag}(\Sigma',O)=\mathcal{D}_{\rm diag}(\Sigma,O)$. Hence, it is not possible to find a superset that is smaller than $\mathcal{S}_{\rm diag}$ in all situations. 

\paragraph*{(ii)}  By the Distortion Propagation Theorem~\ref{theo:distprK}, part~(iii), we have $\delta^{(k)}_{i(-i)}\neq 0\Rightarrow\delta^{(k)}_{ii}>0$. 
Thus, the node set $D_{\rm off}$ in Equation~(\ref{eq:supersetKoff}) is a subset of the node set $D_{\rm diag}$ in Equation~(\ref{eq:supersetK}), and so $\mathcal{S}_{\rm off}\subseteq\mathcal{S}_{\rm diag}$. Moreover, by Corollary~\ref{coro:distpropagK} part~(ii), Assumption~(A4) guarantees [a.e.] that for every node $i$ that is incident to an edge in $E_{O^c}$, we have at least one off-diagonal distortion in every \madgq{} Schur Complement $\tilde\Theta^{(k)}$ on the row of node $i$, provided that $i\in V_k$. Thus, the node set $D_{\rm off}$ in Equation~(\ref{eq:supersetKoff}) includes all nodes that are incident to some edge in $E_{O^c}$, guaranteeing that $E_{O^c}\subseteq O^c\cap(D_{\rm off}\times D_{\rm off})=:\mathcal{S}_{\rm off}$ (Definition~\ref{def:minimalityK}, property (i)). If Assumption~(A4) does not hold, then the set $\mathcal{S}_{\rm off}$ may miss true edges because $D_{\rm off}$ may miss nodes that are incident to edges in $E_{O^c}$. The proof of $\mathcal{S}_{\rm off}$ being minimal is analogous to the proof for $\mathcal{S}_{\rm diag}$. Specifically, even if we knew that some of the node pairs in  $(\{i\}\times V_k^c)\cap O$ were connected, we would still not be able to exclude the possibility of the existence of edges in the portion $\{i\}\times H_i$ in the case where $i\in D_{\rm off}$. Therefore, based on the off-diagonal distortions $\mathcal{D}_{\rm off}(\Sigma,O)$, it is not possible to find a superset that is smaller than $\mathcal{S}_{\rm off}$ in all situations (Definition~\ref{def:minimalityK}, property (ii)). 
\end{proof}

\vspace{0.5mm}

\begin{proof}[Proof of Theorem~\ref{theo:fullrecoveryK} (\sc{GQ Graph recovery (population case)})] 
If $\delta<\nu/2$, then for any $\tau\in[\delta,\nu-\delta)$ the thresholded edge set $\mathcal{E}_O^\tau\equiv \tilde E_O^\tau$ in Equation~(\ref{eq:fulledegqpop}) equals the true edge set $E_O$, as per Theorem~\ref{theo:popO}. 
Moreover, notice that, by definition of $\nu$ (Equation~(\ref{eq:nu})), no off-diagonal entry of $\Theta_O$ may have magnitude in the interval $(0,\nu)$, so if $0<|\tilde\Theta^{(k)}_{ij}|<\nu$, then we must have $\delta^{(k)}_{ij}:=\Theta_{ij}-\tilde\Theta_{ij}^{(k)}\neq 0$. Thus, under Assumption~(A5), for any $\tau\in[\delta,\nu]$, the set $W_\tau$ defined in Algorithm~\ref{algo:fullrecoveryK},
\begin{equation*}
W_\tau=\left\{i\in V:~ \forall k ~s.t.~i\in V_k,~\exists j\neq i, 0<|\tilde\Theta^{(k)}_{ij}|< \tau\right\}
\end{equation*}
equals the set $D_{\rm off}$ in Equation~(\ref{eq:supersetKoff}). 
Therefore, for any $\tau\in[\delta,\nu-\delta)$, $\mathcal{E}^\tau_{O^c}=\mathcal{S}_{\rm off}$ where, under Assumption~(A4), $\mathcal{S}_{\rm off}$ is the minimal superset of $E_{O^c}$ [a.e.] based on oracle off-diagonal distortions, as per  Theorem~\ref{theo:OcK} part (ii).
\end{proof}

\vspace{5mm}

\subsection{Graph Recovery: Finite Sample Analysis}\label{app:proofmainsFiniteSample}

\begin{proof}[Proof of Lemma~\ref{lemma:gqlassoexun}]
The maximum in Equation~(\ref{eq:l1gq}), if it exists, is unique because the objective function is strictly concave as its log-determinant component is strictly concave. To see that the maximum is achieved, first note that, for $\lambda_{ij} > 0, \forall (i,j)\in O$, Lagrangian duality lets us rewrite Equation~(\ref{eq:l1gq}) as
\[
\widehat{\tilde\Theta} ~=~ \underset{\Theta\succ 0,\Theta_{O^c}=0,\Vert \Lambda\odot \Theta\Vert_{1,\rm off}\le C(\Lambda)}{\arg\max}~~ \log\det\Theta - \sum_{(i,j)\in O}\Theta_{ij}\hat\Sigma_{ij},
\]
for some scalar $C(\Lambda) < \infty$ depending on the penalty matrix $\Lambda$. This representation shows that the off-diagonal elements of $\Theta$ are bounded within the weighted $\ell_1$-ball, meaning that only the diagonals $\{\Theta_{ii}\}_{i\in V}$ might potentially diverge to infinity. We have
\begin{eqnarray*}
\log\det\Theta - \sum_{(i,j)\in O}\Theta_{ij}\hat\Sigma_{ij}&=&\log\det\Theta-\sum_{i\in V}\Theta_{ii}\hat\Sigma_{ii}- \sum_{(i,j)\in O,i\neq j}\Theta_{ij}\hat\Sigma_{ij}\\
&\le&\log\det\Theta-\sum_{i\in V}\Theta_{ii}\hat\Sigma_{ii}+{\rm const}\\
&\le &\sum_{i\in V}\left(\log\Theta_{ii} -\Theta_{ii}\hat\Sigma_{ii}\right)+{\rm const}\\ & =: &-h(\Theta_{11},...,\Theta_{pp}),
\end{eqnarray*}
where the first inequality holds because the off-diagonals of $\Theta$ are bounded in the $\ell_1$-ball and $\Vert\hat\Sigma_O\Vert_\infty<\infty$ so we can find a finite constant ${\rm const}<\infty$ such that $- \sum_{(i,j)\in O,i\neq j}\Theta_{ij}\hat\Sigma_{ij}\le {\rm const}$, while the second inequality is an application of Hadamard's Inequality  $\det\Theta\le\prod_{i\in V}\Theta_{ii} $ for positive definite matrices. The function $h$ is a coercive function of $\{\Theta_{ii}\}_{i\in V}$ since it diverges to $\infty$ for any sequence $\Vert(\Theta_{11}^t,...., \Theta_{pp}^t)\Vert_2\to +\infty$, as long as $\hat\Sigma_{ii}>0, \forall i\in V$. Therefore, the objective function $\log\det\Theta - \sum_{(i,j)\in O}\Theta_{ij}\hat\Sigma_{ij}$ may only diverge to $-\infty$, so the maximum of the objective function in Equation~(\ref{eq:l1gq}) is attained at some $\hat{\tilde\Theta}$ with bounded diagonals. Finally, a matrix $\hat{\tilde\Theta}\succ 0$ is a solution of Equation~(\ref{eq:l1gq}) if and only if there exists a matrix $ Z$ that is in the sub-differential $\partial\Vert * \Vert_{1,{\rm off}}$ evaluated at $\hat{\tilde\Theta}$ and that satisfies the first order condition $\hat\Sigma_O -[\hat\Theta^{-1}]_O +\Lambda_O\odot Z_O = 0 $.
\end{proof}

\vspace{0.5mm}

\begin{proof}[Proof of Theorem~\ref{theo:convrate} (Convergence rate of \madgqlasso{})] 
This proof follows the strategy of \cite{ravikumar2011high} for the derivation of the rates of convergence of the graphical lasso based on fully observed data. However, our proof differs in several aspects, because our estimator $\hat{\tilde\Theta}$ is an estimator of $\tilde\Theta$, not $\Theta$, and it is based on an incomplete set of empirical covariances. Moreover, our probability of concentration involves the parameter $\eta$, which measures the relative size of the set $O^c$ of node pairs that are never observed jointly. 

Let $S = \{(i,j) : \tilde\Theta_{ij}\neq 0\}$ be the edge set (with self-loops) induced by the \madgq{} matrix $\tilde\Theta$ in Equation~(\ref{eq:gq0}), and consider the optimization problem
 \begin{equation}\label{eq:l1gqS}
\hat{\tilde\Theta}^S ~:=~ \underset{\Theta\succ 0,\Theta_{S^c}=0}{\arg\max}~~ \log\det\Theta - \sum_{(i,j)\in O}\Theta_{ij}\hat\Sigma_{ij}  ~-~\Vert \Lambda\odot \Theta\Vert_{1,\rm off}
\end{equation}
which is a more constrained version of the \madgqlasso{} in Equation~(\ref{eq:l1gq}) where the constraint $\Theta_{O^c}=0$ is replaced by $\Theta_{S^c}=0$. Indeed, since $\tilde\Theta_{O^c}=0$, we have $O^c\cap S=\emptyset$, so $S\subseteq O$ and  $O^c\subseteq S^c$. Thus the edge set induced by the solution $\hat{\tilde\Theta}^S$ is guaranteed to contain no false positive edge as an estimator of the edge set induced by the \madgq{} matrix $\tilde\Theta$. Moreover, define the following quantities:
\begin{eqnarray}
W_O &:=& \hat\Sigma_O-\Sigma_O\label{eq:WO} \\
\Delta &:=& \hat{\tilde\Theta}^S - \tilde\Theta\\
\tilde\Sigma &:=& \tilde\Theta^{-1}\\
\lambda_{p,\bar n} &:=& \tfrac{8}{\alpha}\sigma(\bar n,p^b)\label{eq:lambdaopt}\\
R(\Delta) &:=& \left(\hat{\tilde\Theta}^S\right)^{-1} - \tilde\Sigma +  \tilde\Sigma\Delta\tilde\Sigma\\
\Gamma &:=& \tilde\Sigma\otimes\tilde\Sigma\\
\kappa_{\Sigma} &:=& |||\tilde\Sigma|||_\infty=\max_j\sum_{k=1}^p|\tilde\Sigma_{jk}|\\
\kappa_{\Gamma} &:=& |||(\Gamma_{SS})^{-1}|||_\infty\\
\bar n^* &:=& \min\left\{n: ~\sigma(n,p^b)\le \left[2(1+\tfrac{8}{\alpha})^{2}3\tilde d \max\{\kappa_{\Sigma}\kappa_{\Gamma}, \kappa^3_{\Sigma}\kappa^2_{\Gamma}\}\right]^{-1}\right\}\label{eq:nbarmin}
\end{eqnarray}
where $\alpha$ is the parameter defined in Assumption~(A8), $\sigma(m,\varepsilon)$ is the global tail function defined in Equation~(\ref{eq:tailmax}), and $\otimes$ denotes the Kronecker matrix product. Furthermore, recall that $\tilde d$ is the max row degree of $\tilde\Theta$, and $b$ is some user-defined scalar. We require $b>2+\tfrac{\log(1-\eta)}{\log p}$ to ensure $1-(1-\eta)p^{2-b}>0$; moreover, by Assumption~(A7) we have $0\le\eta<1-p^{-1}$, which implies $1<2+\tfrac{\log(1-\eta)}{\log p}\le 2$, thereby guaranteeing $b$ to be positive. We now show that if Assumptions (A6)--(A10) hold, and $\Lambda_{ij}=\lambda_{p,\bar n}$ for all $(i,j)\in O$ and $\bar n\ge\bar n^*$ then, with probability larger than $1-(1-\eta)p^{2-b}$, we have\vspace{-1mm}
\begin{equation}\label{ineq:wo}
\Vert W_O \Vert_\infty ~\le~ \sigma(\bar n,p^b)
\end{equation}
and that this implies $\Vert\widehat{\tilde\Theta}-\tilde\Theta\Vert_\infty  \le  C\sigma(\bar n,p^b)
$, for some scalar $C$ that depends on $\alpha$ and $\Gamma$. 
We present this proof in four parts.

~

\paragraph*{\sc Part 1} Suppose that the inequality in Equation~(\ref{ineq:wo}) holds (we will show this happens with probability larger than $1-(1-\eta)p^{2-b}$ in Part 4). Thus
\begin{eqnarray*}
\Vert W_O \Vert_\infty + \lambda_{p,\bar n} &\le& \sigma(\bar n,p^b) + \tfrac{8}{\alpha}\sigma(\bar n,p^b)~~~~~~{ \text{[by Equations~(\ref{eq:lambdaopt}) and (\ref{ineq:wo})]}} \\
(*)~~~~& = & (1+\tfrac{8}{\alpha})\sigma(\bar n,p^b)\\
& \le &  (1+\tfrac{8}{\alpha})\left[2(1+\tfrac{8}{\alpha})^{2}3\tilde d \max\{\kappa_{\Sigma}\kappa_{\Gamma}, \kappa^3_{\Sigma}\kappa^2_{\Gamma}\}\right]^{-1}~~~~~~{ \text{[because $\bar n\ge\bar n^*$]}}\\
& \le &  (2\kappa_{\Gamma})^{-1}\min\{(3\kappa_{\Sigma}\tilde d)^{-1}, (3\kappa^3_{\Sigma}\kappa_{\Gamma}\tilde d)^{-1}\}\underbrace{(1+\tfrac{8}{\alpha})^{-1}}_{\le 1}\\
& \le &  (2\kappa_{\Gamma})^{-1}\min\{(3\kappa_{\Sigma}\tilde d)^{-1}, (3\kappa^3_{\Sigma}\kappa_{\Gamma}\tilde d)^{-1}\}\\
(**)~~~& \le &  (2\kappa_{\Gamma})^{-1} (3\kappa_{\Sigma}\tilde d)^{-1}
\end{eqnarray*}
Then, Lemma~\ref{convlemma}~(iii)  implies \begin{equation}\label{ineq:Delta}
\Vert\Delta\Vert_\infty \le 2\kappa_{\Gamma} (\Vert W_O\Vert_\infty +\lambda_{p,\bar n})
\end{equation}
Consequently two useful inequalities may be established: by combining (\ref{ineq:Delta}) with (*) and with (**), respectively, we obtain the two bounds
\begin{eqnarray}
\Vert \Delta \Vert_\infty &\le& 2\kappa_{\Gamma}(1+\tfrac{8}{\alpha}) \sigma(\bar n,p^b)\label{ineq:delta2}\\
\Vert \Delta \Vert_\infty &\le&  (3\kappa_{\Sigma}\tilde d)^{-1}\label{ineq:delta1}
\end{eqnarray}

~

~

\paragraph*{\sc Part 2} The inequality in Equation~(\ref{ineq:delta1}) lets Lemma~\ref{convlemma}~(ii) hold, so that
\begin{eqnarray*}
\Vert R(\Delta)_O\Vert_\infty 
& \le &  \tfrac{3}{2}\tilde d\Vert\Delta\Vert^2_\infty\kappa^3_{\Sigma}\\
& \le & 6\kappa^3_{\Sigma}\kappa^2_{\Gamma}\tilde d(1+\tfrac{8}{\alpha})^2 [\sigma(\bar n, p^b)]^2~~~~~\text{[by Equation~(\ref{ineq:delta2})]}\\
& = & \left\{6\kappa^3_{\Sigma}\kappa^2_{\Gamma}\tilde d(1+\tfrac{8}{\alpha})^2 \sigma(\bar n, p^b)\right\} \tfrac{\alpha}{8}\lambda_{p,\bar n} ~~~~\text{[by Equation~(\ref{eq:lambdaopt})]} \\
& \le &  \left\{6\kappa^3_{\Sigma}\kappa^2_{\Gamma}\tilde d(1+\tfrac{8}{\alpha})^2 \left[2(1+\tfrac{8}{\alpha})^{2} 3\tilde d \max\{\kappa_{\Sigma}\kappa_{\Gamma}, \kappa^3_{\Sigma}\kappa^2_{\Gamma}\}\right]^{-1}\right\} \tfrac{\alpha}{8}\lambda_{p,\bar n}  ~{ \text{[since $\bar n\ge\bar n^*$]}}\\
& = &  \left\{\kappa^3_{\Sigma}\kappa^2_{\Gamma} \min\{(\kappa_{\Sigma}\kappa_{\Gamma})^{-1}, (\kappa^3_{\Sigma}\kappa^2_{\Gamma})^{-1}\}\right\} \tfrac{\alpha}{8}\lambda_{p,\bar n}\\
& \le &  \left\{\kappa^3_{\Sigma}\kappa^2_{\Gamma} (\kappa^3_{\Sigma}\kappa^2_{\Gamma})^{-1}\right\} \tfrac{\alpha}{8}\lambda_{p,\bar n}\\
& = & \tfrac{\alpha}{8}\lambda_{p,\bar n}
\end{eqnarray*}

\newpage

\paragraph*{\sc Part 3} By Equations~(\ref{eq:lambdaopt}) and (\ref{ineq:wo}) we have  $\Vert W_O \Vert_\infty \le \frac{\alpha}{8}\lambda_{p,\bar n}$, and in Part 2 we obtained $\Vert R(\Delta)_O\Vert_\infty\le\frac{\alpha}{8}\lambda_{p,\bar n}$. Therefore, 
\[
\max\big\{\Vert W_O\Vert_\infty, \Vert R(\Delta)_O\Vert_\infty\big\} \le \tfrac{\alpha}{8}\lambda_{p,\bar n}
\]
so Lemma~\ref{convlemma}~(i) implies that $\hat{\tilde\Theta} = \hat{\tilde\Theta}^S$, and consequently 
\begin{equation}\label{eq:mainrate}
\Vert \widehat{\tilde\Theta}-\tilde\Theta\Vert_\infty = \Vert  \hat{\tilde\Theta}^S-\tilde\Theta\Vert_\infty  = \Vert\Delta\Vert_\infty\le C \sigma(\bar n,p^b)
\end{equation}
where 
\begin{equation}\label{eq:constconv}
    C~:=~2\kappa_{\Gamma}(1+\tfrac{8}{\alpha})
\end{equation}
and the inequality is due to Equation~(\ref{ineq:delta2}). 

~

\paragraph*{\sc Part 4} The inequalities in steps 1-2, and thereby the inequality in Equation~(\ref{eq:mainrate}), hold as long as  $\Vert W_O \Vert_\infty \le \sigma(\bar n,p^b)$ (Equation~ (\ref{ineq:wo})). The latter inequality holds with probability larger than $1-(1-\eta)p^{2-b}$, because
\begin{eqnarray*}
P\left(\Vert W_O \Vert_\infty > \sigma(\bar n,p^b)\right) & = & P\left(\bigcup_{(i,j)\in O, i\le j}\left\{|W_{ij}|> \sigma(\bar n,p^b)\right\}\right)\\
& \le & \sum_{(i,j)\in O, i\le j} P\left(|W_{ij}|> \sigma(\bar n,p^b)\right)~~~~\text{[by Union Bound]}\\
& \le & \sum_{(i,j)\in O, i\le j} P\left(|W_{ij}|> \sigma(n_{ij},p^b)\right) ~~~~\text{[because $n_{ij}\ge \bar n$]}\\
& \le & \sum_{(i,j)\in O, i\le j} P\left(|W_{ij}|> \sigma_{ij}(n_{ij},p^b)\right) ~~~~\text{[by Equation~(\ref{eq:tailmax})]}\\
& \le & \sum_{(i,j)\in O, i\le j} p^{-b} ~~~~\text{[by Equation~(\ref{eq:tail})]}\\
& = & \frac{|O|+p}{2}p^{-b}\\
& \le & |O|p^{-b}\\
& = & (p^2-|O^c|)p^{-b}\\
& = & (p^2-\lceil \eta p^2\rceil)p^{-b} ~~~~\text{[by Assumption (A7)]}\\
& \le & (p^2-\eta p^2)p^{-b}\\
& = & (1-\eta)p^{2-b}
\end{eqnarray*}
Therefore, $\Vert\widehat{\tilde\Theta}-\tilde\Theta\Vert_\infty  \le  C\sigma(\bar n,p^b)$ with at least probability $1-(1-\eta)p^{2-b}$, where $C=2\kappa_{\Gamma}(1+\tfrac{8}{\alpha}) $.
\end{proof}

\begin{proof}[Proof of Corollary~\ref{coro:subgaussianrate} (Convergence rate of \madgqlasso{} (sub-Gaussian))]
If $\frac{X^{(j)}_i}{\sqrt{\Sigma_{ii}}}$ is a zero-mean sub-Gaussian random variable with sub-Gaussianity parameter $\omega$, that is
\[\mathbb{E}\left[\exp\left(t\tfrac{X^{(j)}}{\sqrt{\Sigma_{ii}}}\right)\right]\le \exp\left(\omega^2\tfrac{t^2}{2}\right),~~ \forall t\in\mathbb{R},
\]
then, by Lemma~1 of \cite{ravikumar2011high}, for any $\sigma\in \left(0,~8(1+4\omega^2)\max\limits_{1\le i\le p}\Sigma_{ii}\right)$, we have
\begin{eqnarray}
P(|\hat\Sigma_{ij}-\Sigma_{ij}|>\sigma) &\le& 4\exp\left(-\tfrac{n_{ij}\sigma^2}{128\cdot (1+4\omega^2)^2\cdot\max\limits_{1\le i\le p} \Sigma_{ii}^2}\right),
\end{eqnarray}
where $n_{ij}$ is the sample size used in the empirical covariance $\hat\Sigma_{ij}=\frac{1}{n_{ij}}\sum_{k}X^{(k)}_iX^{(k)}_j$ (under the assumption of zero-means). We want to upper-bound the global tail function $\sigma(m,\varepsilon)$ (Equation~(\ref{eq:tailmax})) by using the inequality above. By solving the equation
\[4\exp\left(-\tfrac{m\sigma^2}{128\cdot (1+4\omega^2)^2\cdot\max\limits_{1\le i\le p}\Sigma_{ii}^2}\right)=\varepsilon^{-1} \]
for $\sigma$, we obtain the upper-bound on the tail function (Equation~(\ref{eq:tail}))
\[
\sigma_{ij}(m,\varepsilon)~\le~ \sqrt{\tfrac{\log(4\varepsilon)128\cdot (1+4\omega^2)^2\max\limits_{1\le i\le p}\Sigma_{ii}^2}{m}}
\]
Thus, an upper-bound on the global tail function (Equation~(\ref{eq:tailmax})) is
\begin{equation}\label{eq:subgausstailbound}
\sigma(m,\varepsilon)~=~\max_{(i,j)\in O}\sigma_{ij}(m,\varepsilon)~\le~ \sigma_{\rm SG}(m,\epsilon):=\sqrt{\tfrac{\log(4\varepsilon)128\cdot (1+4\omega^2)^2\max\limits_{1\le i\le p}\Sigma_{ii}^2}{m}}
\end{equation}
so the minimal required sample size given in Equation~(\ref{eq:nbarmin}) is upper-bounded by 
\begin{equation}\label{eq:subgaussminsamconv}
\bar n^*_{SG} ~:=~ \lceil H\tilde d^2(b\log p+\log 4)\rceil
\end{equation}
where
\begin{equation}\label{eq:subgausssampcomplconst}
   H := \left(2(1+\tfrac{8}{\alpha})^{2}3 \max\{\kappa_{\Sigma}\kappa_{\Gamma}, \kappa^3_{\Sigma}\kappa^2_{\Gamma}\}\right)^2 128(1+4\omega^2)^2\max_{1\le i\le p}\Sigma_{ii}^2
\end{equation}
Therefore, under the assumptions of Theorem~\ref{theo:convrate}, for any  $\bar n\ge \bar n^*_{SG}$, with probability larger than $1-(1-\eta)p^{2-b} $, we have
\begin{equation}\label{eq:upperboundsigma}
\Vert\widehat{\tilde\Theta}-\tilde\Theta\Vert_\infty  ~\le~  C\sigma(\bar n,p^b) ~\le~ C_{\rm SG}\sqrt{\tfrac{b\log p+\log 4}{\bar n}}
\end{equation}
where $C=2\kappa_{\Gamma}(1+\tfrac{8}{\alpha}) $ as in Equation~(\ref{eq:constconv}), and
\begin{equation}\label{eq:CSG}
    C_{SG} ~:=~ 2\kappa_{\Gamma}(1+\tfrac{8}{\alpha}) \sqrt{128}(1+4\omega^2)\max\limits_{1\le i\le p}\Sigma_{ii}
\end{equation}
\end{proof}

\newpage

\begin{proof}[Proof of Theorem~\ref{theo:graphfinite} (GQ Graph recovery (finite samples))] ~
\paragraph*{(i)} If $\bar n\ge \bar n^*_O:=\max\left\{\bar n^*, \min\{m: C\sigma(m,p^b)< 
\nu/2-\delta \} \right\}$ then, by Theorem~\ref{theo:convrate}, with probability larger than $1-(1-\eta)p^{2-b}$ we have
\begin{equation}
 \Vert\widehat{\tilde\Theta}-\tilde\Theta\Vert_{\infty}  ~\le~ C\sigma(\bar n,p^b)~<~
\nu/2-\delta
\end{equation}
and by Triangle Inequality
\begin{eqnarray*}
\hat\delta &:=&\max_{(i,j)\in O,i\neq j}|\widehat{\tilde\Theta}_{ij}-\Theta_{ij}| \\
&=&\max_{(i,j)\in O,i\neq j}|\widehat{\tilde\Theta}_{ij}-\tilde\Theta_{ij}+\tilde\Theta_{ij}-\Theta_{ij}| \\
&\le  &  \max_{(i,j)\in O,i\neq j} \left(|\widehat{\tilde\Theta}_{ij}-\tilde\Theta_{ij}| +|\tilde\Theta_{ij}-\Theta_{ij}|\right)\\
&\le  &  \max_{(i,j)\in O,i\neq j}|\widehat{\tilde\Theta}_{ij}-\tilde\Theta_{ij}| +\max_{(i,j)\in O,i\neq j}|\tilde\Theta_{ij}-\Theta_{ij}|\\
&\le  &  \Vert\hat{\tilde\Theta}-\tilde\Theta\Vert_{\infty} +\delta\\
& \le & C\sigma(\bar n,p^b)+\delta \\
& < & \nu/2 - \delta + \delta\\
& = & \nu/2
\end{eqnarray*}
Since $\hat\delta<\nu/2$, Lemma~\ref{lemma:exactO} guarantees that for any $\tau\in[\hat\delta,\nu-\hat\delta)$ we have $|\widehat{\tilde\Theta}_{ij}|>\tau \Leftrightarrow \Theta_{ij}\neq 0 $ with ${\rm sign}(\hat{\tilde\Theta}_{ij})={\rm sign}(\Theta_{ij})$, $ \forall (i,j)\in E_O$. Finally, since $\hat\delta\le\delta_{\bar n,p}:=C\sigma(\bar n,p^b)+\delta<\nu/2$, we have $[\delta_{\bar n,p},\nu-\delta_{\bar n,p})\subseteq [\hat\delta,\nu-\hat\delta)$, so we can conclude that if $\tau\in [\delta_{\bar n,p},\nu-\delta_{\bar n,p})$, then with probability larger than $1-(1-\eta)p^{2-b}$ we have $\hat{\mathcal{E}}_O=E_O$.

\paragraph*{(ii)} 
Let $\hat{\tilde\Theta}^{(k)}:= \hat{\tilde\Theta}_{V_kV_k}- \hat{\tilde\Theta}_{V_kV_k^c} \hat{\tilde\Theta}_{V_k^cV_k^c}^{-1} \hat{\tilde\Theta}_{V_k^cV_k}$ be the \madgqlasso{} Schur complement relative to the node pairs $V_k\times V_k$. Under the conditions of Theorem~\ref{theo:convrate}, for any $\bar n\ge\bar n^*$, with probability larger than $1-(1-\eta)p^{2-b}$, we have
$\Vert \hat{\tilde\Theta}-\tilde\Theta\Vert_\infty \le C\sigma(\bar n,p^b)$ and also, as shown in {\sc Part 3} of the proof of Theorem~\ref{theo:convrate}, we have $\hat{\tilde\Theta}=\hat{\tilde\Theta}^S$, where $\hat{\tilde\Theta}^S$ is the matrix in Equation~(\ref{eq:l1gqS}) satisfying $\{(i,j):\hat{\tilde\Theta}^S_{ij}\neq 0\}\subseteq\{(i,j):\tilde\Theta_{ij}\neq 0\}$; hence, $\hat{\tilde\Theta}-\tilde\Theta$ has maximum row-degree no larger than $\tilde d$, which is the max row-degree of $\tilde\Theta$. 
So, for any $k=1,...,K$, by Lemma~\ref{lemma:schurineq} and Lemma~\ref{lemma:spectralinf} we have
\begin{eqnarray*}
\Vert \hat{\tilde\Theta}^{(k)}- \tilde\Theta^{(k)}\Vert_\infty &~\le~&  \frac{\lambda_{\rm max}(\hat{\tilde\Theta})}{\lambda_{\rm min}(\hat{\tilde\Theta})}\frac{\lambda_{\rm max}(\tilde\Theta)}{\lambda_{\rm min}(\tilde\Theta)}\min\{\sqrt{p+s},\tilde d\}\Vert\hat{\tilde\Theta}-\tilde\Theta\Vert_\infty\\
&~\le~&  \frac{\lambda_{\rm max}(\hat{\tilde\Theta})}{\lambda_{\rm min}(\hat{\tilde\Theta})}\frac{\lambda_{\rm max}(\tilde\Theta)}{\lambda_{\rm min}(\tilde\Theta)}\min\{\sqrt{p+s},\tilde d\} C\sigma(\bar n,p^b)
\end{eqnarray*}
where $s = |\{(i,j):i\neq j,\tilde\Theta_{ij}\neq 0\}|$ is the number of nonzero off-diagonals of $\tilde\Theta$,  
and by Lemma~\ref{lemma:spectralinf} and Lemma~\ref{lemma:eigenbounds}, 
\begin{eqnarray*}
\lambda_{\rm max}(\hat{\tilde\Theta}) &~\le~& \lambda_{\rm max}(\tilde\Theta)+\Vert \hat{\tilde\Theta}-\tilde\Theta\Vert_2 ~\le~ \lambda_{\rm max}(\tilde\Theta)+\min\{\sqrt{p+s},\tilde d\}\Vert \hat{\tilde\Theta}-\tilde\Theta\Vert_\infty\\
&~\le~& \lambda_{\rm max}(\tilde\Theta)+\min\{\sqrt{p+s},\tilde d\} C\sigma(\bar n,p^b)
\end{eqnarray*}
and
\begin{eqnarray*}
\lambda_{\rm min}(\hat{\tilde\Theta}) &~\ge~& \lambda_{\rm min}(\tilde\Theta)-\Vert \hat{\tilde\Theta}-\tilde\Theta\Vert_2 
~\ge~ \lambda_{\rm min}(\tilde\Theta)-\min\{\sqrt{p+s},\tilde d\}\Vert \hat{\tilde\Theta}-\tilde\Theta\Vert_\infty \\
&~\ge~& \lambda_{\rm min}(\tilde\Theta)-\min\{\sqrt{p+s},\tilde d\} C\sigma(\bar n,p^b)
\end{eqnarray*}
Thus, if $\bar n\ge \max\{\bar n^*, \min\{m: \min\{\sqrt{p+s},\tilde d\} C\sigma(m,p^b)<\lambda_{\rm min}(\tilde\Theta)/2\}$ and the other conditions of Theorem~\ref{theo:convrate} hold, then with probability larger than $1-(1-\eta)p^{2-b}$ we have
\[
\frac{\lambda_{\rm max}(\hat{\tilde\Theta})}{\lambda_{\rm min}(\hat{\tilde\Theta})} ~\le ~ \frac{\lambda_{\rm max}(\tilde\Theta)+\lambda_{\rm min}(\tilde\Theta)/2}{\lambda_{\rm min}(\tilde\Theta)-\lambda_{\rm min}(\tilde\Theta)/2}
~\le~\frac{\lambda_{\rm max}(\tilde\Theta)+\lambda_{\rm max}(\tilde\Theta)/2}{\lambda_{\rm min}(\tilde\Theta)/2}~=~ 3\tilde{\mathcal{K}}
\]
where $\tilde{\mathcal{K}}=\frac{\lambda_{\rm max}(\tilde\Theta)}{\lambda_{\rm min}(\tilde\Theta)}$ is the condition number of $\tilde\Theta$, and so for any $k=1,...,K$
\[
\Vert \hat{\tilde\Theta}^{(k)}- \tilde\Theta^{(k)}\Vert_\infty ~\le~  3\tilde{\mathcal{K}}^2\min\{\sqrt{p+s},\tilde d\} C\sigma(\bar n,p^b)
\]
which implies
\[
\max_{1\le k\le K} \Vert \hat{\tilde\Theta}^{(k)}- \tilde\Theta^{(k)}\Vert_\infty ~\le~D\min\{\sqrt{p+s},\tilde d\} \sigma(\bar n,p^b)
\]
where 
\begin{equation}\label{eq:constsupreco}
D:=3\tilde{\mathcal{K}}^2 C
\end{equation}
Therefore, since $3\tilde{\mathcal{K}}^2> 1$, if 
\[\bar n ~\ge~ \bar n^*_{O^c}:= \max\big\{\bar n^*, \min \big\{m: D\min\{\sqrt{p+s},\tilde d\} \sigma(m,p^b)<\tfrac{\min(\psi,\lambda_{\rm min}(\tilde\Theta))}{2}\big\} \big\}\]
and all conditions of Theorem~\ref{theo:convrate} hold, then, with probability larger than $1-(1-\eta)p^{2-b}$,
\begin{equation}\label{eq:schurin}
\max_{1\le k\le K} \Vert \hat{\tilde\Theta}^{(k)}- \tilde\Theta^{(k)}\Vert_\infty ~\le~ \tau_0~:=~ D\min\{\sqrt{p+s},\tilde d\} \sigma(\bar n,p^b) ~<~\tfrac{\psi}{2}
\end{equation}
where $\psi>0$ is defined in Equation~(\ref{eq:psi}). 
If Equation~(\ref{eq:schurin}) holds, then for any $(i,j,k)$, 
\begin{eqnarray}
|\hat{\tilde\Theta}^{(k)}_{ij}|>\tau_0 
&&~\Rightarrow~ |\tilde\Theta^{(k)}_{ij}|>0\\
|\hat{\tilde\Theta}^{(k)}_{ij}|< t-\tau_0
&&~\Rightarrow~ |\tilde\Theta^{(k)}_{ij}|< t, ~~~~~~~~~\text{for any }t>\tau_0
\end{eqnarray}
Moreover, by definition of $\psi$, we are certain that $|\tilde\Theta^{(k)}_{ij}|\notin (0,\psi)\cup(\delta-\psi,\delta) $ and $|\hat{\tilde\Theta}^{(k)}_{ij}|\notin (\tau_0,\psi-\tau_0)\cup(\delta-\psi+\tau_0,\delta-\tau_0) $, so we also have
\begin{eqnarray}
|\tilde\Theta^{(k)}_{ij}|>0 
&&~\Rightarrow~ |\tilde\Theta^{(k)}_{ij}|\ge\psi  
~\Rightarrow~ |\hat{\tilde\Theta}^{(k)}_{ij}|>\psi-\tau_0>\tau_0\\
|\tilde\Theta^{(k)}_{ij}|<\delta 
&&~\Rightarrow~|\tilde\Theta^{(k)}_{ij}|\le\delta-\psi
~\Rightarrow~|\hat{\tilde\Theta}^{(k)}_{ij}|\le\delta-\psi+\tau_0< \delta-\tau_0
\end{eqnarray}
Thus, if Equation~(\ref{eq:schurin}) holds, then for all $t>\tau_0$, we have
\begin{equation}\label{eq:distinclusion}
\{(i,j,k): \tau_0<|\hat{\tilde\Theta}^{(k)}_{ij}|< t-\tau_0\} \subseteq \{(i,j,k):0<|\tilde\Theta^{(k)}_{ij}|< t\},
\end{equation}
with equality at $t=\delta$, guaranteeing that for any  $t\ge\delta$,
\begin{equation}\label{eq:distinclusionx}
\{(i,j,k):0<|\tilde\Theta^{(k)}_{ij}|< \delta\}\subseteq \{(i,j,k): \tau_0<|\hat{\tilde\Theta}^{(k)}_{ij}|< t-\tau_0\}
\end{equation}
Now, notice that, under Assumption~(A5), for any $\tau\in[\delta,\nu]$, the set $W_\tau$ defined in Algorithm~\ref{algo:fullrecoveryK} equals the set $D_{\rm off}$ in Equation~(\ref{eq:supersetKoff}) (see Proof of Theorem~\ref{theo:fullrecoveryK}), so if Equation~(\ref{eq:distinclusionx}) holds, then the set $\hat W_{\tau_0,\tau_1}$ defined in Algorithm~\ref{algo:fullrecoveryKn} equals $D_{\rm off}$ for any $\tau_1\in[\delta-\tau_0,\nu-\tau_0]$. Therefore, if $\bar n\ge \bar n_{O^c}^*$ and all the other conditions of Theorem~\ref{theo:convrate} hold, then for any $\tau_1\in[\delta-\tau_0,\nu-\tau_0]$, with probability larger than $1-(1-\eta)p^{2-b}$,  we have $\hat{\mathcal{U}}_{\tau_0,\tau_1} = \mathcal{S}_{\rm off}$ where, under Assumptions~(A4), $\mathcal{S}_{\rm off}$ is the minimal superset of $E_{O^c}$ as per Theorem~\ref{theo:fullrecoveryK}. 
\end{proof}

\more\more

\begin{proof}[Proof of Corollary~\ref{coro:subgaussiangraph} {\small (GQ Graph recovery (finite samples, sub-Gaussian))}] This corollary restates Theorem~\ref{theo:graphfinite} for the case of sub-Gaussian data, and exploits the results in Corollary~\ref{coro:subgaussianrate}. 
\paragraph*{(i)}  By using the upper bound in Equation~(\ref{eq:upperboundsigma}), we obtain
    \begin{eqnarray*}
        ~\min\left\{m:C\sigma(m,p^b)<\tfrac{\nu}{2}-\delta\right\} &\le& \min\left\{m:C_{\rm SG}\sqrt{\tfrac{b\log p+\log 4}{m}}<\tfrac{\nu}{2}-\delta\right\}\\
        &\le& \left\lceil\tfrac{C_{\rm SG}^2(b\log p+\log 4)}{(\nu/2-\delta)^2}\right\rceil
    \end{eqnarray*}
    Thus, the minimal sample size $\bar n_O^*$ required by Theorem~\ref{theo:graphfinite}(i) is upper bounded by
    \begin{equation}
        \bar n^*_{\rm SG,O} ~:=~ \max\left\{\bar n^*_{\rm SG},\left\lceil\tfrac{C_{\rm SG}^2(b\log p+\log 4)}{(\nu/2-\delta)^2}\right\rceil \right\}
    \end{equation}
    where $\bar n^*_{\rm SG}$ is given in Equation~(\ref{eq:subgaussminsamconv}). Therefore, following the proof of Theorem~\ref{theo:graphfinite}(i), we obtain that, for any $\bar n\ge \bar n^*_{\rm SG,O}$, with probability larger than $1-(1-\eta)p^{2-b}$,
    \[
    \hat \delta := \max_{(i,j)\in O,i\neq j}|\hat{\tilde\Theta}_{ij}-\Theta_{ij}| \le  \delta+C_{\rm SG} \sqrt{\tfrac{b\log p+\log 4}{\bar n}}<\nu/2
    \]
    so if we set $\delta_{\bar n, p} :=\delta+C_{\rm SG} \sqrt{\tfrac{b\log p+\log 4}{\bar n}}$, then for any $\tau\in [\delta_{\bar n,p},\nu-\delta_{\bar n,p})$,  Algorithm~\ref{algo:fullrecoveryKn} yields $\hat{\mathcal{E}}_O=E_O$. 
    
\paragraph*{(ii)} By using the upper bound in Equation~(\ref{eq:upperboundsigma}), we obtain
    \begin{eqnarray*}
& &\min \big\{m: D\min\{\sqrt{p+s},\tilde d\} \sigma(m,p^b)<\tfrac{\min(\psi,\lambda_{\rm min}(\tilde\Theta))}{2}\big\} \\
&& ~~~~~\le \min \big\{m: D_{\rm SG}\sqrt{\tfrac{\min\{p+s,\tilde d^2\}(b\log p+\log 4)}{\bar n}}<\tfrac{\min(\psi,\lambda_{\rm min}(\tilde\Theta))}{2}\big\}\\
&& ~~~~~\le\left\lceil\tfrac{4D_{\rm SG}^2\min\{p+s,\tilde d^2\}(b\log p+\log 4)}{\min(\psi^2,\lambda_{\rm min}(\tilde\Theta)^2)}\right\rceil
    \end{eqnarray*}
where $D$ is defined in Equation~(\ref{eq:constsupreco}), and
\begin{equation}\label{eq:Dsg}
D_{\rm GS}:=3\tilde{\mathcal{K}}^2 C_{\rm SG},
\end{equation}
where $C_{\rm SG}$ is defined in Equation~(\ref{eq:CSG}) and $\mathcal{K}$ is the condition number of $\tilde\Theta$. Thus, the minimal sample size $\bar n_{O^c}^*$ required by Theorem~\ref{theo:graphfinite}(ii) is upper bounded by
    \begin{equation}
        \bar n^*_{\rm SG,O^c} ~:=~ \max\left\{\bar n^*_{\rm SG},\left\lceil\tfrac{4D_{\rm SG}^2\min\{p+s,\tilde d^2\}(b\log p+\log 4)}{\min(\psi^2,\lambda_{\rm min}(\tilde\Theta)^2)}\right\rceil \right\}
    \end{equation}
    where $\bar n^*_{\rm SG}$ is given in Equation~(\ref{eq:subgaussminsamconv}).
 Therefore, following the proof of Theorem~\ref{theo:graphfinite}(ii), we obtain that if we set $\tau_0 = D_{\rm SG}\sqrt{\tfrac{\min\{p+s,\tilde d^2\}(b\log p+\log 4)}{\bar n}}$, then for any $\bar n\ge \bar n^*_{\rm SG,O^c}$ and for any $\tau_1\in [\delta-\tau_0,\nu-\tau_0]$, with probability larger than $1-(1-\eta)p^{2-b}$, Algorithm~\ref{algo:fullrecoveryKn} yields $\hat{\mathcal{E}}_{O^c}=\mathcal{S}_{\rm off}$.
\end{proof}

\section{Auxiliary Results}\label{app:auxiliary}
This appendix contains two important lemmas that are used in the proofs in Appendix~\ref{app:proofmains}: Lemma~\ref{lemma:madcont} and Lemma~\ref{convlemma}. Lemma~\ref{lemma:madcont} establishes some continuity properties of the \madgq{} matrix $\tilde\Theta$; these properties are used in the proofs of Theorems~\ref{theo:nofnd} and \ref{theo:popO}. Lemma~\ref{convlemma} establishes three important inequalities that are used in the proof of Theorem~\ref{theo:convrate}.

\vspace{0.5mm}

\begin{lemma}[\scbf{MAD$_{GQ}$ continuity}]\label{lemma:madcont}
Let $O$ be the observed set of node pairs (including loops). 
Let $\mathcal{T}_{M_O} = \{M_{O^c}: M\succ 0\} $ be the set of all portions $M_{O^c}$ that complete the partial matrix $M_O$ into a $p\times p$ positive definite matrix $M$, and let $\mathcal{P}_O = \{M_O: \mathcal{T}_{M_O}\neq\emptyset\}$ be the set of positively completable partial $p\times p$ matrices. For a given $p\times p$ positive definite matrix $\Theta\succ 0$, define 
\begin{equation}
\mathcal{Z}(\Theta,O,\gamma) ~=~ \left\{\Theta^*\succ 0:~ \Theta^*_O=\Theta_O,~ E^*_{O^c}=E_{O^c}, ~\Vert\Theta_{O^c}^*\Vert_\infty=\gamma\right\} 
\end{equation}
which is the set of all positive definite matrices that equal $\Theta$ over the entry set $O$, have the same graphical structure of $\Theta_{O^c}$ in their $O^c$ portion, and their largest magnitude in $O^c$ is $\gamma$. Correspondingly, define the nonnegative function
\begin{equation}\label{eq:deltabargen}
\bar{\delta}(\Theta,O,\gamma) = \max\limits_{\Theta^*\in\mathcal{Z}(\Theta,O,\gamma)} |\tilde\Theta^*_{ij}-\Theta^*_{ij}|
\end{equation}
where $\tilde\Theta^*$ is the \madgq{} approximation of $\Theta^*$ (Equation~(\ref{eq:gq0})) based on the portion of covariance  $\Sigma^*_O=\left[\Theta^{*-1}\right]_O$. We have 
\begin{enumerate}[(i).]
\item The \madgq{} matrix $\tilde\Theta$ in Equation~(\ref{eq:gq0})  is a 1:1 bicontinuous function (homeomorphism) of $\Sigma_O\in\mathcal{P}_O$ (in any normed topological space).

\item For any fixed $\Theta_O\in\mathcal{P}_O$, $\tilde\Theta$ is a continuous function of $\Theta_{O^c}\in\mathcal{T}_{\Theta_O}$.

\item For any fixed $\Theta\succ 0$ and $O$, $\bar\delta(\Theta,O,\gamma)$ is a continuous function of $\gamma\in[\gamma_1,\gamma_2]$, for some $\gamma_1\ge 0$ and $\gamma_2<\min\limits_{(i,j)\in O^c}\sqrt{\Theta_{ii}\Theta_{jj}}$. If $0\in\mathcal{T}_{\Theta_O}$, then $\gamma_1=0$.\less
\end{enumerate} 
\begin{proof} ~\less\less
\paragraph*{(i)} The portion $O^c$ of the max-determinant solution in Equation~(\ref{eq:maxdet}) can be written as
\begin{equation}\label{eq:Fun}
\tilde\Sigma_{O^c} :=F(\Sigma_O) ~=~ \underset{\Sigma^*_{O^c}\in \mathcal{A}(\Sigma_O)}{\arg\max} ~~ f(\Sigma_O,\Sigma^*_{O^c}),\less
\end{equation}
where the real-valued function $f(\Sigma_O,\Sigma_{O^c}) = \det\Sigma$ is continuous over the set  \[\left\{(\Sigma^*_O,\Sigma^*_{O^c}):~\Sigma^*_O\in\mathcal{P}_O,~\Sigma^*_{O^c}\in \mathcal{T}_{\Sigma^*_O}\right\},\]
and $\mathcal{A}(\Sigma_O)\subset\mathcal{T}_{\Sigma_O}$ is any arbitrary continuous compact-valued correspondence of $\Sigma_O$ such that $ \tilde\Sigma_{O^c}\in\mathcal{A}(\Sigma_O)$; the existence of the set $\mathcal{A}(\Sigma_O)$ is guaranteed by the existence and uniqueness of the solution of the max-determinant problem (Lemma~\ref{lemma:maxdetopt}). Then, by Berge's Maximum Theorem \citep{berge1997topological}, $F(\Sigma_O)$ is an upper hemicontinuous correspondence with nonempty and compact values. Moreover, since $\Sigma_O$ is completable to a positive definite matrix, the max-determinant problem has a unique solution (Lemma~\ref{lemma:maxdetopt}), i.e. $F(\Sigma_O)$ is single-valued. Hence, $F(\Sigma_O)$ is a continuous function of $\Sigma_O$. Moreover, since $\tilde\Sigma_O\equiv\Sigma_O$, trivially also $\tilde\Sigma_O$ is a continuous function of $\Sigma_O$. Therefore, the full matrix $\tilde\Sigma $ is a continuous function of $\Sigma_O$. Correspondingly, also $\tilde\Theta = \tilde\Sigma^{-1} :=M(\Sigma_O)$ is a continuous function of $\Sigma_O$. On the other hand,  $\Sigma_O=\left[\tilde\Theta^{-1}\right]_O$ and $\tilde\Theta_{O^c}\equiv 0$, i.e. $\Sigma_O$ is a continuous function $\tilde\Theta_{O}$. Therefore, we can conclude that $\tilde\Theta_O$ is a 1:1 bicontinuous function (homeomorphism) of $\Sigma_O$, and so is $\tilde\Theta$.\less
\paragraph*{(ii)} The function $ G(\Theta):=\Theta^{-1}$ is continuous for $\Theta\succ 0$. Then, for any fixed $\Theta_O\in\mathcal{P}_O$, the function $G_{\Theta_O}(\Theta_{O^c}):=G(T)|_{T_O=\Theta_O,T_{O^c}=\Theta_{O^c}}$ is a continuous function of $\Theta_{O^c}\in\mathcal{T}_{\Theta_O}$, and so is $H_{\Theta_O}(\Theta_{O^c}) := [G_{\Theta_O}(\Theta_{O^c})]_O$. Thus, for any fixed $\Theta_O\in\mathcal{P}_O$, the \madgq{} matrix can be expressed as the composite function $\tilde\Theta=M(H_{\Theta_O}(\Theta_{O^c}))$, where $M(\Sigma_O)$ is a continuous function of $\Sigma_O\in\mathcal{P}_O$ (by part (i)). Therefore, for any fixed $\Theta_O\in\mathcal{P}_O$, $\tilde\Theta$ is a continuous function of $\Theta_{O^c}\in\mathcal{T}_{\Theta_O}$.\less
\paragraph*{(iii)} By part (ii), for a fixed $\Theta_O\in\mathcal{P}_O$, the maximal distortion $\delta$ (Equation~(\ref{eq:delta})) is a continuous function of $\Theta_{O^c}$, say $\delta=\phi(\Theta_{O^c})$. Moreover, for a fixed $\Theta_O\in\mathcal{P}_O$ and graph structure $E_{O^c}$ in $O^c$, define the set\less
 \[C_{\Theta_O,E_{O^c}}(\gamma):=\left\{\Theta^*_{O^c}\in \mathcal{T}_{\Theta_O}:~E^*_{O^c}=E_{O^c},~\Vert\Theta^*_{O^c}\Vert_\infty=\gamma\right\}\]
which is a continuous mapping of the interval $\mathcal{G}=[\gamma_1,\gamma_2]$ to $\mathcal{T}_{\Theta_O}$, where $\gamma_1,\gamma_2$ are such that $C_{\Theta_O,E_{O^c}}(\gamma)\neq\emptyset$ for any $\gamma\in\mathcal{G}$; indeed $\gamma_1\ge 0$, and $\gamma_2<\min\limits_{(i,j)\in O^c}\sqrt{\Theta_{ii}\Theta_{jj}}$ to allow $\Theta$ be positive definite. Then, we can write
\[
\bar\delta(\Theta,O,\gamma) = \max\limits_{\Theta^*_{O^c}\in C_{\Theta_O,E_{O^c}}(\gamma)} \phi(\Theta^*_{O^c})
\]
By Berge's Maximum Theorem,  $\bar\delta(\Theta,O,\gamma)$ is a continuous function of $\gamma$. If $0\in\mathcal{T}_{\Theta_O}$, then $C_{\Theta_O,E_{O^c}}(0)\neq\emptyset$ and $\gamma_1=0$.
\end{proof}
\end{lemma}

\vspace{0.5mm}

\begin{lemma}\label{convlemma}
The quantities defined in Equations~(\ref{eq:l1gqS})-(\ref{eq:nbarmin}) have the following properties: 
\begin{enumerate}[(i).]
\item If Assumption~(A8) holds and $\max\{\Vert W_O\Vert_\infty,\Vert R(\Delta)_O\Vert_\infty\}\le \tfrac{\alpha}{8}\lambda_{p,\bar n}$, then $\hat{\tilde\Theta} = \hat{\tilde\Theta}^S$. 
\item If $\Vert \Delta \Vert_\infty\le (3\kappa_{\Sigma}\tilde d)^{-1}$, then $ \Vert R(\Delta)_O \Vert_\infty \le  \Vert R(\Delta) \Vert_\infty \le \frac{3}{2} \tilde d\Vert \Delta \Vert^2_\infty \kappa^3_{\Sigma}$.
\item If $r:= 2\kappa_{\Gamma}(\Vert W_O \Vert_\infty + \lambda_{p,\bar n}) \le \min\left\{(3\kappa_{\Sigma}\tilde d)^{-1}, (3\kappa^3_{\Sigma}\kappa_{\Gamma}\tilde d)^{-1}\right\}$, then $\Vert \Delta \Vert_\infty \le r$.
\end{enumerate}

\begin{proof} This proof exploits the same techniques used in the proofs of Lemmas~4, 5, and 6 of \cite{ravikumar2011high}. However, our proofs differ in several aspects, because our estimator $\hat{\tilde\Theta}$ is an estimator of $\tilde\Theta$, not $\Theta$, and it is based on the incomplete set of empirical covariances $\hat\Sigma_O$.
\paragraph*{(i)} We are going to show that the solution $\hat{\tilde\Theta}^S$ in Equation~(\ref{eq:l1gqS}) is also a solution of the \madgqlasso{} optimization problem in Equation~(\ref{eq:l1gq}) with $\Lambda_{ij}=\lambda_{p,\bar n}$ for all $(i,j)\in O$, i.e. $\hat{\tilde\Theta}^S=\hat{\tilde\Theta}$, if Assumption~(A8) holds and $\max\{\Vert W_O\Vert_\infty,\Vert R(\Delta)_O\Vert_\infty\}\le \tfrac{\alpha}{8}\lambda_{p,\bar n}$. We prove this by following the strategy used in the proof of Lemma~4 in \cite{ravikumar2011high}.

Recall that $\tilde\Sigma=\tilde\Theta^{-1}$,   $W_O=\hat\Sigma_O-\tilde\Sigma_O=\hat\Sigma_O-\Sigma_O$ because $\tilde\Sigma_O=\Sigma_O$, $\Delta=\hat{\tilde\Theta}^S-\tilde\Theta$, and $R(\Delta)=\big(\hat{\tilde\Theta}^S\big)^{-1}-\tilde\Sigma+\tilde\Sigma\Delta\tilde\Sigma$, and $S=\{(i,j):\tilde\Theta_{ij}\neq 0\}$. 
If $\hat{\tilde\Theta}^S$ is a solution to  the optimization problem in Equation~(\ref{eq:l1gq}), then it must satisfy the optimality condition
\[
\hat\Sigma_O-\big[(\hat{\tilde\Theta}^S)^{-1}\big]_O+\lambda_{p,\bar n}Z_O = 0
\]
where $Z_O$ must be part of the sub-differential of $\Vert *\Vert_{1,\rm off}$ evaluated at $\hat{\tilde\Theta}^S_O$. 
This condition can be rewritten as
\[
\left[\tilde\Sigma\Delta\tilde\Sigma\right]_O+W_O-R(\Delta)_O+\lambda_{p,\bar n}Z_O = 0
\]

Equivalently we can rewrite
\[
\Gamma_{O,O}\Delta_O+ W_O-R(\Delta)_O+\lambda_{p,\bar n}Z_O = 0
\]
where $\Gamma_{O,O}$ is a submatrix of $\Gamma=\tilde\Sigma\otimes\tilde\Sigma$ relative to the index set of pairs $(i,j)\in O$, and all other terms $\Delta_O,W_O,R(\Delta)_O,Z_O$ are column vectors. Let us rewrite the equation above as
\begin{equation*}
    \left\{\begin{array}{l}
       \Gamma_{O\cap S,O\cap S}\Delta_{O\cap S} + W_{O\cap S}-R(\Delta)_{O\cap S}+\lambda_{p,\bar n}Z_{O\cap S} = 0 \\
       \Gamma_{O\cap S^c,O\cap S}\Delta_{O\cap S} + W_{O\cap S^c}-R(\Delta)_{O\cap S^c}+\lambda_{p,\bar n}Z_{O\cap S^c} = 0 
    \end{array}\right.
\end{equation*}
where we used the fact that $\Delta_{O\cap S^c}=0$ (note: $S\subseteq O$, so $O\cap S=S$, but we keep the notation $O\cap S$ to remind that we are restricting all computations over the set $O$). From the first equation we obtain
\[
\Delta_{O\cap S} = \left(\Gamma_{O\cap S,O\cap S}\right)^{-1}\left[-W_{O\cap S}+R(\Delta)_{O\cap S}-\lambda_{p,\bar n}Z_{O\cap S}\right]
\]
Plugging this solution in the second equation of the system, we obtain
\begin{eqnarray*}
Z_{O\cap S^c} &=& \frac{1}{\lambda_{p,\bar n}}\Gamma_{O\cap S^c,O\cap S}\left(\Gamma_{O\cap S,O\cap S}\right)^{-1}\left(W_{O\cap S}-R(\Delta)_{O\cap S}\right)\\
&& +\Gamma_{O\cap S^c,O\cap S}\left(\Gamma_{O\cap S,O\cap S}\right)^{-1}Z_{O\cap S}-\frac{1}{\lambda_{p,\bar n}}\left(W_{O\cap S^c}-R(\Delta)_{O\cap S^c}\right)
\end{eqnarray*}
Then
\begin{eqnarray*}
\Vert Z_{O\cap S^c}\Vert_\infty &\le& \frac{1}{\lambda_{p,\bar n}}\vertiii{\Gamma_{O\cap S^c,O\cap S}\left(\Gamma_{O\cap S,O\cap S}\right)^{-1}}_\infty\left(\Vert W_{O\cap S}\Vert_\infty+\Vert R(\Delta)_{O\cap S}\Vert_\infty\right)\\
&& +\vertiii{\Gamma_{O\cap S^c,O\cap S}\left(\Gamma_{O\cap S,O\cap S}\right)^{-1}}_\infty \cdot\Vert Z_{O\cap S}\Vert_\infty\\
&& +\frac{1}{\lambda_{p,\bar n}}\left(\Vert W_{O\cap S^c}\Vert_\infty +\Vert R(\Delta)_{O\cap S^c}\Vert_\infty\right)
\end{eqnarray*}
where $\vertiii X_\infty:=\max\limits_{1\le i\le p}\sum\limits_{j=1}^p|X_{ij}|$ and $\Vert X \Vert_\infty = \max\limits_{i,j}|X_{ij}|$ is the max-norm.
If assumption (A8) holds, then
\[ 
\vertiii{\Gamma_{O\cap S^c,O\cap S}\left(\Gamma_{O\cap S,O\cap S}\right)^{-1}}_\infty\le 1-\alpha
\]
and $\Vert Z_{O\cap S}\Vert_\infty\le 1$ because $Z_{O}$ is assumed to be part of the sub-differential of $\Vert *\Vert_{1,\rm off}$ evaluated at $\hat{\tilde\Theta}^S_O$. Therefore
\begin{eqnarray*}
\Vert Z_{O\cap S^c}\Vert_\infty &\le & \frac{1-\alpha}{\lambda_{p,\bar n}}(\Vert W_{O\cap S}\Vert_\infty +\Vert R(\Delta)_{O\cap S}\Vert_\infty)\\
&& + (1-\alpha)+\frac{1}{\lambda_{p,\bar n}}(\Vert W_{O\cap S^c}\Vert_\infty+\Vert R(\Delta)_{O\cap S^c}\Vert_\infty)\\
&\le & \frac{2-\alpha}{\lambda_{p,\bar n}}\left(\Vert W_O\Vert_\infty+\Vert R(\Delta)_O\Vert_\infty\right)+1-\alpha
\end{eqnarray*}
Therefore, if $\max\{\Vert W_O\Vert_\infty,\Vert R(\Delta)_O\Vert_\infty\}\le \frac{\alpha}{8}\lambda_{p,\bar n}$, then
\begin{eqnarray*}
    \Vert Z_{O\cap S^c}\Vert_\infty &\le& \frac{\alpha}{2}-\frac{\alpha^2}{4}+1-\alpha ~\le~1-\frac{\alpha}{2}~<~1
\end{eqnarray*}
confirming that $Z_O$ is indeed part of the sub-differential of $\Vert *\Vert_{1,\rm off}$ evaluated at $\hat{\tilde\Theta}^S_O$, that is $\hat{\tilde\Theta}^S$ is indeed a solution to Equation~(\ref{eq:l1gq}), and by uniqueness we must have $\hat{\tilde\Theta}^S=\hat{\tilde\Theta}$.

\paragraph*{(ii)} We want to show that if $\Vert \Delta \Vert_\infty\le (3\kappa_{\Sigma}\tilde d)^{-1}$, then $ \Vert R(\Delta)_O \Vert_\infty \le  \Vert R(\Delta) \Vert_\infty \le \frac{3}{2} \tilde d\Vert \Delta \Vert^2_\infty \kappa^3_{\Sigma}$. The first inequality is due to the fact that the max-norm of a submatrix is smaller than the max-norm of the full matrix. The second inequality is more challenging, and we prove it by following the strategy used in the proof of Lemma~5 in \cite{ravikumar2011high}.

Notice that,
\begin{eqnarray*}
\big(\hat{\tilde\Theta}^S)^{-1} &=& \left(\tilde\Theta(I+\tilde\Sigma\Delta)\right)^{-1}\\
    &=&(I+\tilde\Sigma\Delta)^{-1}\tilde\Sigma\\
    &=&\sum_{k=0}^\infty(-1)^k(\tilde\Sigma\Delta)^k\tilde\Sigma\\
    &=&\tilde\Sigma-\tilde\Sigma\Delta\tilde\Sigma+\sum_{k=2}^\infty(-1)^k(\tilde\Sigma\Delta)^k\tilde\Sigma\\
    &=&\tilde\Sigma-\tilde\Sigma\Delta\tilde\Sigma+\tilde\Sigma\Delta\tilde\Sigma\Delta M\tilde\Sigma
\end{eqnarray*}
where $M=\sum_{k=0}^\infty(-1)^k(\tilde\Sigma\Delta)^k$ is convergent if $\vertiii{\tilde\Sigma\Delta}_\infty<\frac{1}{3}$. Indeed, under the assumption $\Vert \Delta \Vert_\infty\le (3\kappa_{\Sigma}\tilde d)^{-1}$ we have
\[
\vertiii{\tilde\Sigma\Delta}_\infty\le\vertiii{\tilde\Sigma}_\infty\vertiii{\Delta}_\infty\le \kappa_\Sigma\tilde d\Vert \Delta\Vert_\infty<\tfrac{1}{3}
\]
Therefore, $R(\Delta)=\tilde\Sigma\Delta\tilde\Sigma\Delta M \tilde\Sigma$, so if we let $e_k$ be a vector containing all zeros except on the $k$-th position where it equals 1, we obtain
\begin{eqnarray*}
    \Vert R(\Delta)\Vert_\infty &=& \max_{ij}|e_i\tilde\Sigma\Delta\tilde\Sigma\Delta M\tilde\Sigma e_j|\\
    &\le& \max_i\Vert e_i^T\tilde\Sigma\Delta\Vert_\infty \cdot\max_j\Vert\tilde\Sigma\Delta M\tilde\Sigma e_j\Vert_1\\
    &\le& \max_i\Vert e_i^T\tilde\Sigma\Vert_1\Vert\Delta\Vert_\infty \cdot \max_j \Vert\tilde\Sigma\Delta M\tilde\Sigma e_j\Vert_1\\
    &\le & \vertiii{\tilde\Sigma}_\infty\Vert\Delta\Vert_\infty\vertiii{\tilde\Sigma \Delta M\tilde\Sigma}_1\\
    &\le & \vertiii{\tilde\Sigma}_\infty\Vert\Delta\Vert_\infty\vertiii{\tilde\Sigma M^T \Delta \tilde\Sigma}_\infty\\
    &\le & \vertiii{\tilde\Sigma}_\infty\Vert\Delta\Vert_\infty \vertiii{\tilde\Sigma}^2_\infty\vertiii{M^T}_\infty\vertiii{\Delta}_\infty\\
    &\le& \tilde d\Vert\Delta\Vert_\infty^2\kappa_\Sigma^3\vertiii{M^T}_\infty\\
    &\le& \frac{3}{2} \tilde d\Vert\Delta\Vert_\infty^2\kappa_\Sigma^3
\end{eqnarray*}
where the last inequality is due to
\[
\vertiii{M^T}_\infty \le \sum_{k=0}^\infty\vertiii{\Delta\tilde\Sigma}_\infty^k\le \frac{1}{1-\vertiii{\tilde\Sigma}_\infty\vertiii{\Delta}_\infty}\le \frac{1}{1-1/3}=\frac{3}{2}
\]

~

\paragraph*{(iii)} We want to show that if 
\begin{equation}\label{eq:B3cond}
r:= 2\kappa_{\Gamma}(\Vert W_O \Vert_\infty + \lambda_{p,\bar n}) \le \min\left\{(3\kappa_{\Sigma}\tilde d)^{-1}, (3\kappa^3_{\Sigma}\kappa_{\Gamma}\tilde d)^{-1}\right\}
\end{equation}
then $\Vert \Delta \Vert_\infty \le r$. We prove it by following the strategy used in the proof of Lemma~6 in \cite{ravikumar2011high}. 

Define the function $G(\Theta_{S})=-[\Theta^{-1}]_{S}+\hat\Sigma_{S}+\lambda_{p,\bar n}Z_{S}$, where $S=\{(i,j):\tilde\Theta_{ij}\neq 0\}\subseteq O$. We can see that if $\hat{\tilde\Theta}^S$ is the solution of Equation~(\ref{eq:l1gq}) and $Z_{S}$ belongs to the sub-differential of $\Vert *\Vert_{1,\rm off}$ evaluated at $\hat{\tilde\Theta}^S_S$, then $G(\hat{\tilde\Theta}^S_{S})=0$. Now, define the function
\[
F(x):=-\left(\Gamma_{S,S}\right)^{-1}g(\tilde\Theta_S+x)+\Delta_S
\]
where $g(*)={\rm vec}(G(*))$, $\Delta_S:=\hat{\tilde\Theta}^S_{S}-\tilde\Theta_{S}$ is a column vector (recall that $\Delta_{S^c}\equiv 0$, so $\Delta_S$ identifies $\Delta$), and $\Gamma_{S,S}$ is a submatrix of $\Gamma=\tilde\Sigma\otimes\tilde\Sigma$ relative to the index set of pairs $(i,j)\in S$. Notice that, $F(x)=x$ if and only if $x=\Delta_S$. We now show that if $r=2\kappa_{\Gamma}(\Vert W_O \Vert_\infty + \lambda_{p,\bar n}) \le \min\left\{(3\kappa_{\Sigma}\tilde d)^{-1}, (3\kappa^3_{\Sigma}\kappa_{\Gamma}\tilde d)^{-1}\right\}$, then $F(\mathbb{B}(r))\subseteq \mathbb{B}(r):=\{x\in\mathbb{R}^{|S|}:\Vert x\Vert_\infty\le r\}$, so by Brower's Fixed Point Theorem there must be a point $x^*\in\mathbb{B}(r)$ such that $F(x^*)=x^*$. Since such point is unique, it must be $x^*=\Delta_S$, meaning that $\Vert\Delta_S\Vert_\infty\le r$ because $x^*\in\mathbb{B}(r)$. 

We have
\begin{eqnarray*}
g(\tilde\Theta_S+\Delta_S) &=& -[(\tilde\Theta+\Delta)^{-1}]_S+\tilde\Sigma_S+W_S+\lambda_{p,\bar n}Z_S
\end{eqnarray*}
If $\Delta_S\in\mathbb{B}(r)$ and $r\le(3\kappa_\Sigma\tilde d)^{-1}$, then \[\vertiii{\tilde\Sigma\Delta}_\infty \le\vertiii{\tilde\Sigma}_\infty\vertiii{\Delta}_\infty\le \kappa_\Sigma\tilde d\Vert \Delta\Vert_\infty<\tfrac{1}{3}\]
so the matrix expansion used in the proof of Lemma~\ref{convlemma}~(ii) is valid. In particular, in that proof we obtained the equation $(\tilde\Theta(I+\tilde\Sigma\Delta))^{-1}=\tilde\Sigma-\tilde\Sigma\Delta\tilde\Sigma+R(\Delta)$, which implies (after vectorization)
\[
[(\tilde\Theta+\Delta)^{-1}-\tilde\Sigma]_S+\Gamma_{S,S}\Delta_S = R(\Delta)_S
\]
Thus,
\begin{eqnarray*}
    F(\Delta_S) &=& -(\Gamma_{S,S})^{-1}g(\tilde\Theta_S+\Delta_S)+\Delta_S\\
    &=&(\Gamma_{S,S})^{-1} ([(\tilde\Theta+\Delta)^{-1}-\tilde\Sigma]_S-W_S-\lambda_{p,\bar n}Z_S)+\Delta_S\\
    &=&(\Gamma_{S,S})^{-1} ([(\tilde\Theta+\Delta)^{-1}-\tilde\Sigma]_S+\Gamma_{S,S}\Delta_S-W_S-\lambda_{p,\bar n}Z_S)\\
   &=&(\Gamma_{S,S})^{-1}  
    R(\Delta)_S-(\Gamma_{S,S})^{-1}(W_S+\lambda_{p,\bar n}Z_S)
\end{eqnarray*}
and so
\begin{eqnarray*}
    \Vert F(\Delta_S)\Vert_\infty &\le& \Vert(\Gamma_{SS})^{-1}  
    R(\Delta)_S\Vert_\infty+\Vert(\Gamma_{SS})^{-1}(W_S+\lambda_{p,\bar n}Z_S)\Vert_\infty\\
    &\le & \vertiii{(\Gamma_{SS})^{-1}}_\infty\Vert R(\Delta)_O\Vert_\infty+\vertiii{(\Gamma_{SS})^{-1}}_\infty(\Vert  W_O\Vert_\infty+\lambda_{p,\bar n}\Vert Z_S\Vert_\infty)\\
    &\le & \kappa_\Gamma (\Vert R(\Delta)_O\Vert_\infty+\Vert  W_O\Vert_\infty+\lambda_{p,\bar n})
\end{eqnarray*}
where $\Vert Z_S\Vert_{\infty}<1$. Now, if $\Vert\Delta\Vert_\infty\le r$ and Equation~(\ref{eq:B3cond}) holds, then we have $\Vert\Delta\Vert_\infty\le (3\kappa_{\Sigma}\tilde d)^{-1}$ so by Lemma~\ref{convlemma}~(ii) 
\[\Vert R(\Delta)_O\Vert_\infty\le\frac{3}{2}\tilde d\Vert\Delta\Vert_\infty^2\kappa_\Sigma^3\le \frac{3}{2}\tilde d \kappa_\Sigma^3 r^2\]
Equation~(\ref{eq:B3cond}) also implies $\Vert\Delta\Vert_\infty\le r\le (3\kappa^3_{\Sigma}\kappa_{\Gamma}\tilde d)^{-1}$, so
\[
\Vert R(\Delta)_O\Vert_\infty\le\frac{3}{2}\tilde d \kappa_\Sigma^3 r^2 \le \frac{3}{2}\tilde d \kappa_\Sigma^3 (3\kappa^3_{\Sigma}\kappa_{\Gamma}\tilde d)^{-1} r = \frac{r}{2\kappa_\Gamma}
\]
Therefore, if Equation~(\ref{eq:B3cond}) holds, then we have $\Vert R(\Delta)_O\Vert_\infty\le \frac{r}{2\kappa_\Gamma}$ and $2\kappa_{\Gamma}(\Vert W_O \Vert_\infty + \lambda_{p,\bar n}) =r$, so
\begin{eqnarray*}
    \Vert F(\Delta_S)\Vert_\infty  &\le & \kappa_\Gamma (\Vert R(\Delta)_O\Vert_\infty+\Vert  W_O\Vert_\infty+\lambda_{p,\bar n})\\
    &\le & \kappa_\Gamma (\tfrac{r}{2\kappa_\Gamma}+\tfrac{r}{2\kappa_\Gamma}) =r
\end{eqnarray*}
Thus, we have shown that for any $\Delta_S\in \mathbb{B}(r)$, with $r$ satisfying Equation~(\ref{eq:B3cond}), we have $F(\Delta_S)\in \mathbb{B}(r)$, i.e. $F(\mathbb{B}(r))\subseteq \mathbb{B}(r)$, so by Brower's Fixed Point Theorem there must be a point $x^*\in\mathbb{B}(r)$ such that $F(x^*)=x^*$. Since such point is unique, it must be $x^*=\Delta_S$, meaning that $\Vert\Delta\Vert_\infty\equiv\Vert\Delta_S\Vert_\infty\le r$.
\end{proof}
\end{lemma}

\vspace{1mm}

\section{Matrix Inequalities}\label{app:matrixineq}
This appendix contains fundamental matrix inequalities that are used in the proofs in Appendices~\ref{app:proofmains}, \ref{app:auxiliary}, and \ref{app:popcaseK2}.

\begin{lemma}\label{lemma:maxnormineq}
Let $M\in \mathbb{R}^{d\times q}$, $P\in \mathbb{R}^{q\times p}$, and $Q\in \mathbb{R}^{p\times r}$. Then
\begin{eqnarray}\label{eq:maxnormineq2}
\Vert MP\Vert_\infty &\le& \min\{{\rm rd}(M),{\rm rd}(P^T)\}\Vert M\Vert_\infty \Vert P\Vert_\infty ~\le~ q\Vert M\Vert_\infty \Vert P\Vert_\infty
\end{eqnarray}
and
\begin{eqnarray}\label{eq:maxnormineq3}
\Vert MPQ\Vert_\infty & \le &\min\{{\rm rd}(M),{\rm rd}(P^T)\}{\rm rd}(Q^T)\Vert M\Vert_\infty \Vert P\Vert_\infty \Vert Q\Vert_\infty \\ \nonumber & \le & qp\Vert M\Vert_\infty \Vert P\Vert_\infty \Vert Q\Vert_\infty
\end{eqnarray}
where ${\rm rd}(M)=\max_i|\{(i,j):M_{ij}\neq 0\}|$ denotes the max row-degree of the matrix $M$. 
Moreover, for any $p\times p$ positive definite matrix $S\succ 0$ and $U\subseteq \{1,...,p\}$ we have
\begin{equation}
\Vert S_{UU}^{-1}\Vert_\infty ~\le~ \left(\lambda_{\rm min}(S_{UU
})\right)^{-1} ~\le~ \left(\lambda_{\rm min}(S)\right)^{-1}
\end{equation}
where $\lambda_{\rm min}(S)$ is the smallest eigenvalue of $S$.
\begin{proof}
\begin{eqnarray*}
\Vert MP\Vert_\infty &=& \max_{i,j}\left|\sum_{k=1}^q M_{ik}P_{kj}\right|\\
&=& \max_{i,j}\left|\sum_{k:M_{ik}P_{kj}\neq 0} M_{ik}P_{kj}\right|\\
&\le & \max_{i,j}\sum_{k:M_{ik}P_{kj}\neq 0} \left|M_{ik}P_{kj}\right|\\
&=& \min\left\{{\rm rd}(M),{\rm rd}(P^T)\right\}\Vert M\Vert_\infty\Vert P\Vert_\infty\\
&=& q\Vert M\Vert_\infty\Vert P\Vert_\infty
\end{eqnarray*}
Moreover,
\begin{eqnarray*}
\Vert MPQ\Vert_\infty &=& \max_{i,j}\left|\sum_{h=1}^p\sum_{k=1}^q M_{ik}P_{kh}Q_{hj}\right|\\
&=& \max_{i,j}\left|\sum_{(k,h):M_{ik}P_{kh}Q_{hj}\neq 0} M_{ik}P_{kh}Q_{hj}\right|\\
&\le& \max_{i,j}\sum_{(k,h):M_{ik}P_{kh}Q_{hj}\neq 0} \left|M_{ik}P_{kh}Q_{hj}\right|\\
&\le& \max_{i,j}\sum_{(k,h):M_{ik}P_{kh}Q_{hj}\neq 0} \Vert M\Vert_\infty\Vert P\Vert_\infty\Vert Q\Vert_\infty\\
&\le& \min\left\{{\rm rd}(M),{\rm rd}(P^T)\right\}{\rm rd}(Q^T)\Vert M\Vert_\infty\Vert P\Vert_\infty\Vert Q\Vert_\infty\\
&\le & qp \Vert M\Vert_\infty\Vert P\Vert_\infty\Vert Q\Vert_\infty
\end{eqnarray*}
Finally,
\begin{eqnarray*}
    \Vert S_{UU}^{-1}\Vert_\infty ~\le~ \Vert S_{UU}^{-1}\Vert_2 ~=~ \lambda_{\rm max}(S_{UU}^{-1})~=~\left(\lambda_{\rm min}(S_{UU})\right)^{-1} ~\le~\left(\lambda_{\rm min}(S)\right)^{-1}
\end{eqnarray*}
where the first inequality is the standard relationship between max-norm and spectral norm, and the final inequality is due to the Cauchy's Interlace Theorem which guarantees $\lambda_{\rm min}(S)\le\lambda_{\rm min}(S_{UU})$.
\end{proof}
\end{lemma}

\vspace{0.5mm}

\begin{lemma}\label{lemma:schurineq}
Let $X,Y\in\mathbb{R}^{p\times p}$ be positive definite matrices. Then, for any nonempty set $A\subset \{1,...,p\}$,
\begin{equation}\label{eq:schurineq}
\Vert X/X_{A^cA^c}~ -~ Y/Y_{A^cA^c}\Vert_\infty~~\le~~\frac{\lambda_{\rm max}(X)}{\lambda_{\rm min}(X)}\frac{\lambda_{\rm max}(Y)}{\lambda_{\rm min}(Y)}\Vert X-Y\Vert_2
\end{equation}
where $X/X_{A^cA^c}=X_{AA}-X_{AA^c}(X_{A^cA^c})^{-1}X_{A^cA}$ is the Schur Complement of the block $A^c\times A^c$ of the matrix $X$, $\lambda_{\rm min}(X)$ and $\lambda_{\rm max}(X)$ are the smallest and the largest eigenvalues of $X$, and $\Vert *\Vert_2$ is the spectral norm.
\begin{proof}
We will prove the following equivalent statement in terms of $\Sigma = X^{-1}$ and $\Psi=Y^{-1}$: for any $p\times p$ positive definite matrices $\Sigma$ and $\Psi$, and any set $A\subset\{1,...,p\}$
\[
\Vert \Sigma_{AA}^{-1}-\Psi_{AA}^{-1}\Vert_\infty\le \frac{\lambda_{\rm max}(\Sigma)}{\lambda_{\rm min}(\Sigma)}\frac{\lambda_{\rm max}(\Psi)}{\lambda_{\rm min}(\Psi)}\Vert \Sigma^{-1}-\Psi^{-1}\Vert_2
\]
Notice that the following chain of (in)equalities are true
\begin{eqnarray*}
\Vert \Sigma_{AA}^{-1}-\Psi_{AA}^{-1}\Vert_\infty & \le & \Vert \Sigma_{AA}^{-1}-\Psi_{AA}^{-1}\Vert_2\\
& = & \Vert \Sigma_{AA}^{-1}(\Sigma_{AA}-\Psi_{AA})\Psi_{AA}^{-1}\Vert_2\\
& \le & \Vert \Sigma_{AA}^{-1}\Vert_2~\Vert\Sigma_{AA}-\Psi_{AA}\Vert_2~\Vert\Psi_{AA}^{-1}\Vert_2\\
& \le & \Vert \Sigma_{AA}^{-1}\Vert_2~\Vert \Sigma-\Psi\Vert_2~\Vert\Psi_{AA}^{-1}\Vert_2\\
& = & \Vert \Sigma_{AA}^{-1}\Vert_2~\Vert \Sigma(\Sigma^{-1}-\Psi^{-1})\Psi\Vert_2~\Vert\Psi_{AA}^{-1}\Vert_2\\
& \le & \Vert \Sigma_{AA}^{-1}\Vert_2~\Vert \Sigma\Vert_2~\Vert \Sigma^{-1}-\Psi^{-1}\Vert_2~\Vert \Psi\Vert_2~\Vert\Psi_{AA}^{-1}\Vert_2\\
& = & \frac{\lambda_{\rm max}(\Sigma)}{\lambda_{\rm min}(\Sigma_{AA})}\frac{\lambda_{\rm max}(\Psi)}{\lambda_{\rm min}(\Psi_{AA})}\Vert \Sigma^{-1}-\Psi^{-1}\Vert_2\\
& \le & \frac{\lambda_{\rm max}(\Sigma)}{\lambda_{\rm min}(\Sigma)}\frac{\lambda_{\rm max}(\Psi)}{\lambda_{\rm min}(\Psi)}\Vert \Sigma^{-1}-\Psi^{-1}\Vert_2
\end{eqnarray*}
where the second and fifth steps follow from observing that for any two invertible matrices $W$ and $Z$, we can write $W^{-1} - Z^{-1} = Z^{-1}(Z-W)W^{-1}$. The third and sixth steps come from the sub-multiplicativity of the spectral norm. The fourth step follows from the fact that the spectral norm of a submatrix is smaller than the $\ell_2$-norm of the full matrix. The seventh step is due to the definition of spectral norm. The eighth step is due to Cauchy's Interlace Theorem.
\end{proof}
\end{lemma}

\begin{lemma}\label{lemma:spectralinf}
For any $p\times p$ symmetric matrix $X$ with max row-degree smaller than or equal to $d$, we have 
\begin{equation}
\Vert X\Vert_2\le\min(\sqrt{\Vert X\Vert_0},d)\Vert X\Vert_\infty
\end{equation}
where $\Vert X\Vert_2$ is the spectral norm, $\Vert X\Vert_0:=|\{(i,j):X_{ij}\neq 0\}|$,  and $\Vert X\Vert_\infty$ is the max norm.
\begin{proof}
Because of the relationship between spectral norm and Frobenius norm, we have
\[
\Vert X\Vert_2\le \Vert X\Vert_F = \sqrt{\sum_{i=1}^p\sum_{j=1}^p X_{ij}^2} \le \sqrt{\Vert X\Vert_0\Vert X\Vert_\infty^2}=\sqrt{\Vert X\Vert_0}\Vert X\Vert_\infty
\]
Moreover, by Holder's Inequality and the fact that $X$ is symmetric and has max row-degree smaller than or equal to $d$
\[
\Vert X\Vert_2\le \sqrt{ \vertiii X_1 \vertiii X_\infty} =
\vertiii X_\infty \le d\Vert X\Vert_\infty
\]
where $\vertiii X_1:=\max\limits_{1\le j\le p}\sum\limits_{i=1}^p|X_{ij}|$ and $\vertiii X_\infty:=\max\limits_{1\le i\le p}\sum\limits_{j=1}^p|X_{ij}|$.
\end{proof}
\end{lemma}

\vspace{0.5mm}

\begin{lemma}\label{lemma:eigenbounds}
For any two $p\times p$ symmetric matrices $X$ and $Y$, we have
\begin{eqnarray}
\lambda_{\rm min}(X) &~\ge~& \lambda_{\rm min}(Y)-\Vert X-Y\Vert_2\\
\lambda_{\rm max}(X) &~\le~& \lambda_{\rm max}(Y)+\Vert X-Y\Vert_2
\end{eqnarray}
\begin{proof}
First of all note that
    \begin{eqnarray}
        \lambda_{\rm min}(X) &~=~& \min_{\Vert v\Vert=1} v^TXv\\
        \lambda_{\rm max}(X) &~=~&\max_{\Vert v\Vert=1} v^TXv
    \end{eqnarray}
For any unit vector $v\in\mathbb{R}^p$, we have
\begin{equation}
    v^TXv ~=~ v^T(X-Y)v+v^TYv
\end{equation}
where 
\[
\lambda_{\rm min}(Y)~\le~ v^TYv~\le~ \lambda_{\rm max}(Y)
\]
and, by Cauchy-Schwartz's Inequality,
\[
|v^T(X-Y)v|~\le~ \Vert v\Vert^2 \Vert X-Y\Vert_2 ~=~ \Vert X-Y\Vert_2
\]
that is
\[
-\Vert X-Y\Vert_2 ~\le~ v^T(X-Y)v ~\le~ \Vert X-Y\Vert_2
\]

Therefore
\begin{eqnarray*}
\lambda_{\rm min}(X) &~=~& \min_{\Vert v\Vert=1} v^TXv\\
&~=~& \min_{\Vert v\Vert=1} \left(v^T(X-Y)v+v^TYv\right)\\
&~\ge~& \min_{\Vert v\Vert=1} v^T(X-Y)v+\min_{\Vert v\Vert=1}v^TYv\\
&~\ge~& -\Vert X-Y\Vert_2+\lambda_{\rm min}(Y)
\end{eqnarray*}
and 
\begin{eqnarray*}
\lambda_{\rm max}(X) &~=~& \max_{\Vert v\Vert=1} v^TXv\\
&~=~& \max_{\Vert v\Vert=1} \left(v^T(X-Y)v+v^TYv\right)\\
&~\le~& \max_{\Vert v\Vert=1} v^T(X-Y)v+\max_{\Vert v\Vert=1}v^TYv\\
&~\le~& \Vert X-Y\Vert_2+\lambda_{\rm max}(Y)
\end{eqnarray*}
\end{proof}
\end{lemma}

 \more 
\more
 
\section{Details on simulations and figures}
\subsection{Details on simulations}\label{app:simdet}\less
\subsubsection{Classes of graphs}\label{app:simdet1}\less
 The following classes of graphs are used in simulations (Section~\ref{sec:simulations}) and illustrated in Figure~\ref{fig:sim1}A, where $p$ is the number of nodes and $d:=\max_{i\in V}|\{(i,j):\Theta_{ij}\neq 0\}|$  
denotes the graph degree, and $i< j$:
\begin{enumerate}[(i).]
\item {\it Chain}: $(i,j)$ are connected if and only if $j=i+1$ ($d=2$).
\item {\it Loop}:  $(i,j)$ are connected if and only if $j=i+1$ or $(i,j)=(1,p)$ ($d=2$).
\item {\it Star}:  $(i,j)$ are connected only if $i=k$ or $j=k$, for some fixed $k$ ($1\le d\le p$).
\item {\it Tree} (binary):  $(i,j)$ are connected if and only if $j=2i,2i+1$ ($d=3$).
\item {\it Spatial model} $S(p,w,r)$, where nodes have spatial positions $w=\{w_1$,..., $w_p\}\subset\mathbb{R}^2$, and nodes $(i,j)$ are connected if their distance $D_{ij}=\Vert w_i-w_j\Vert_2$ is below $r>0$.
\item {\it Erd\H{o}s-R\'{e}nyi model} $ ER(p,\pi) $ \citep{erdos1959random}, where the edges are randomly assigned to node pairs independently with probability $\pi$. The node degree $d$ is random and has expectation $p\pi$.
\item {\it Barab\'{a}si-Albert model} $BA(p,p_0)$ \citep{albert2002statistical}, where $p_0<p$ is the initial number of connected nodes, and the other $p-p_0$ nodes are sequentially added to the network by connecting each of them to an existing node $i$ that has node degree $d_i$ with probability proportional to $d_i/\sum_k d_k$.
\item {\it Spatial-Random model} $SR(p,w,f)$, where nodes have spatial positions $w=\{w_1,...,w_p\}$, and a pair $(i,j)$ is connected with probability $\pi_{ij} = f(D_{ij}) $, where $f$ is a decreasing function of the distance $D_{ij}=\Vert w_i-w_j\Vert_2$, e.g. $f(x)= e^{-ax}, a>0$. This model produces networks that reflect the spatial neuronal functional connectivity structure observed in some brain cortical areas \citep{vinci2018adjusted, vinci2018adjustedB}, where two neurons are more likely to be conditionally independent when physically farther apart ($D_{ij}$ large).
\end{enumerate} 
In Appendix~\ref{app:figdetails} we provide more specific details on the graphs used in the figures.

\subsubsection{Observational scheme}\label{app:simdet3}\less
The nodal sets $V_1,...,V_K\subset V$ used in the simulations of Section~\ref{sec:simulations} are given by
\begin{equation}\label{eq:simvk}
V_k=\left\{1+\left\lfloor\tfrac{k-1}{K-1}(p-q_0)\right\rfloor, ...., q_0+\left\lceil\tfrac{k-1}{K-1}(p-q_0)\right\rceil\right\},
\end{equation} 
so that $|V_k|\approx q_0, \forall k=1,...,K$, where $p/K<q_0<p$.

\subsubsection{Oracle estimation}\label{app:simdet2}\less
Throughout Section~\ref{sec:simulations} we set $\Lambda_{ij}\equiv \lambda,\forall i\neq j$ in Equation~(\ref{eq:l1gq}) and denoting the \madgqlasso{} estimator by $\hat{\tilde\Theta}(\lambda)$, in the simulations of Section~\ref{sim:rates} we pick 
\begin{equation}
\lambda^* ~=~ \arg\min_{\lambda\ge 0}~ \Vert\hat{\tilde\Theta}(\lambda)-\tilde\Theta\Vert_\infty \end{equation}
which produces the oracle \madgqlasso{} denoted by $ \hat{\tilde\Theta}(\lambda^*)$. This oracle quantity may be viewed as the best possible evaluation of $\hat{\tilde\Theta}(\lambda)$ as an estimator of $\tilde\Theta$ that could ever be achieved with any penalty parameter selection criterion that aims at minimizing the $\ell_\infty$ distortion between $\hat{\tilde\Theta}(\lambda)$ and $\tilde\Theta$.  Similarly, we denote the oracle graphical lasso by $\hat\Theta_{\rm glasso}(\lambda^{**})$, where 
\begin{equation}
\lambda^{**} ~=~ \arg\min_{\lambda\ge 0}~ \Vert\hat\Theta_{\rm glasso}(\lambda)-\Theta\Vert_\infty
\end{equation}

\more

\subsection{Details on figures}\label{app:figdetails}
\subsubsection{Figure~\ref{fig:intro}} The graph shown in panel (C) is a spatial random graph (Appendix~\ref{app:simdet1}, model (viii)) with connection probability function $f(\omega)=\exp(-\omega)$ and node spatial positions over the discrete grid $\{1,...,10\}\times\{1,...,10\}\subset\mathbb{R}^2$.

\subsubsection{Figure~\ref{fig:poprecovery}}
The matrix $\Theta$ used in Figure~\ref{fig:poprecovery} has graph structure generated as an Erd\H{o}s-R\'enyi graph $ER(p=40,\pi=0.05)$. Moreover, the diagonal entries of $\Theta$ are all equal to 1, while the nonzero off-diagonals have magnitudes all equal to $p^{-1}$; 25\% of the nonzero entries are positive and 75\% are negative (correspondingly, 75\% of nonzero partial correlations are positive).

\subsubsection{Figures~\ref{fig:sim1}} All ground truth precision matrices used in simulations have graphical structures listed in Appendix~\ref{app:simdet1}, and have diagonals equal to 1, while the nonzero off-diagonals have magnitudes all equal to $p^{-1}$. Moreover, 25\% of the nonzero entries are then set to be positive and 75\% are set to be negative (correspondingly, 75\% of nonzero partial correlations are positive). Furthermore, the Star graphs have degree $ d=\lceil p/4 \rfloor$; the Erd\H{o}s-R\'enyi graphs were generated with $\pi=0.01$; the Barab\'{a}si-Albert graphs were generated using $p_0=1$; the Spatial Random graphs were generated using $f(x)=\exp(-2x)$ with nodes occupying $p$ positions on the grid $\{1,...,\lceil \sqrt{p}\rceil\}\times \{1,...,\lceil \sqrt{p}\rceil\}$. The columns/rows of every precision matrix were permuted to ensure $E_{O^c}\neq\emptyset$.

\subsubsection{Figures~\ref{fig:sim2}} The Erd\H{o}s-R\'enyi graphs were generated with $\pi=2/p$. All ground truth precision matrices have diagonals equal to 1, while the nonzero off-diagonals have magnitudes all equal to $0.2$. Moreover, 25\% of the nonzero entries are then set to be positive and 75\% are set to be negative (correspondingly, 75\% of nonzero partial correlations are positive).

\subsubsection{Figures~\ref{fig:data1}}
The data \cite{stringer2019spontaneous} can be found at {\small \url{https://figshare.com/articles/dataset/Recordings_of_ten_thousand_neurons_in_visual_cortex_during_spontaneous_behaviors/6163622}}, file  \\ ``spont\_M161025\_MP030\_2016-11-20.mat''. The AUC that assesses the similarity between the GQ graphs and the Glasso graphs with given number of edges is computed by varying the input parameters of Algorithm~\ref{algo:fullrecoveryKn}, $\tau_0,\tau,\tau_1$, and $\Lambda$, with $\Lambda_{ij}=\lambda$ for all $(i,j)$. 

\subsubsection{Figure~\ref{fig:poprecoveryABCapp}}
The matrix $\Theta$ used in Figure~\ref{fig:poprecoveryABCapp} has graph structure generated as an Erd\H{o}s-R\'enyi graph $ER(p=40,\pi=0.05)$. Moreover, the diagonal entries of $\Theta$ are all equal to 1, while the nonzero off-diagonals have magnitudes all equal to $p^{-1}$, except the ones in $O^c$ which are set equal to $(4p)^{-1}$; 25\% of the nonzero entries are positive and 75\% are negative (correspondingly, 75\% of nonzero partial correlations are positive).

~

All computations for simulations, data analyses, and figures were implemented in R.

\newpage
\section{Population Analysis in the special case \texorpdfstring{$K=2$}{Lg}}\label{app:popcaseK2}\less
In this appendix we restate several of our results for the special case where we observe only two vertex subsets $V_1\neq V_2$, that is $O=(V_1\times V_1)\cup (V_2\times V_2)$. 
For this case, the \madgq{} optimization problem in Equation~(\ref{eq:gq0}) has a tractable closed-form solution (Equation~(\ref{thetaMD3x3})), which allows us to analyse the Graph Quilting problem in greater detail analytically. For simplicity of exposition, we shall let $V_1=A\cup B$ and $V_2=B\cup C$, where $A,B,C$ is a partition of $V$, so that $O^c=(A\times C)\cup (C\times A)$ and $B$ contains the overlapping vertices between the two observation sets. The first three subsections of this appendix are organized similarly to Section~\ref{sec:popK}. We investigate the graph recovery in $O$ and in $O^c$ separately, in Appendices~\ref{sec:OABC} and \ref{sec:OcABC}, and then condense the results into one algorithm for the recovery of the full graph in Appendix~\ref{sec:fullABC}. Note that our results on the graph recovery in $O$ in the special case $K=2$ are already stated in Section~\ref{sec:popO}, but we restate them here for completeness.

\subsection{Graph recovery in \texorpdfstring{$O$}{Lg}}\label{sec:OABC}\less
Lemma~\ref{lemma:exactO} states that if $\delta<\nu/2$ then we can recover the edge set and signs in $O$ exactly by simply thresholding the entries of $\tilde\Theta_O$ at level $\tau\in [\delta,\nu-\delta)$. We identify three situations where the condition $\delta<\nu/2$ is satisfied, and we specify them in terms of $\gamma$:

\begin{enumerate}
    \item[(A1).] $E_{AC}=\emptyset$, i.e. $\gamma=0$.
    \item[(A2).] $B$ is disconnected from $A$ and $C$ and $0<\gamma <\sqrt{\frac{\nu\lambda_{\min}}{2d_{O^c}^2}} $, where $\lambda_{\min}$ is the smallest eigenvalue of $\Theta$, and $d_{O^c}$ is the max node-degree in the sub-graph $E_{AC}$.

    \item[(A3).] $0<\gamma<\frac{-b+\sqrt{b^2+2a\nu}}{2a}$, where $a=d_{O^c}^2(\lambda_{\min}^{-1}+2q^2d_B^2\gamma_B^2\lambda_{\min}^{-3})$, $b=d_{O^c}(d_B\gamma_B\lambda_{\min}^{-1}+2qd_B^2\gamma_B^2\lambda_{\min}^{-2})$, 
 $q=\max\{|A|,|C|\}$, $d_B$ is the largest number of edges from one node in $B$ to $A$ or to $C$, and $\gamma_B=\Vert\Theta_{B(AC)}\Vert_\infty$. 
\end{enumerate}

The following theorem (identical to Corollary~\ref{coro:popOABC})  states conditions for the exact recovery of $E_O$ in the special case $K=2$:

\begin{theorem}[\scbf{Exact Graph Recovery in $O$ ($K=2$)}]\label{theo:popOABCapp} If Condition (A1) or (A2) or (A3) hold, then $\delta <\nu/2$  and for any $\tau\in [\delta,\nu-\delta)$, we have $\tilde E_O^{\tau} = E_O$ (Equation~(\ref{eq:Etau})), and ${\rm sign}(\tilde\Theta_{ij})={\rm sign}(\Theta_{ij})$,  $\forall(i,j)\in E_O$.
\end{theorem}

Condition (A1) corresponds to the simplest situation depicted by Theorem~\ref{thm:identifiability}, where  $E_{AC}=\emptyset$ guarantees $\tilde\Theta=\Theta$, yielding $\delta=0<\nu/2$. Conversely, conditions (A2) and (A3) exploit several, rather technical, matrix inequalities given in Appendix~\ref{app:graphrecoverypop},  Appendix~\ref{app:auxiliary}, and Appendix~\ref{app:matrixineq},
which explicitly relate the magnitude $\gamma$ of the strongest edge in $O^c$ to the other quantities characterizing $\Theta$. Overall, however, we can see that the exact graph recovery in $O$ is easier to accomplish when the magnitude $\nu$ of the weakest edge in $O$ and the smallest eigenvalue $\lambda_{\rm min}$ of $\Theta$ are large, while the size $q$ and maximum node degree $d_{O^c}$ in $O^c$ are small. Finally, note that (A3) reduces to (A2) if $\gamma_B\to 0$.

\begin{figure}[t!]
\center
\includegraphics[width=1\textwidth]{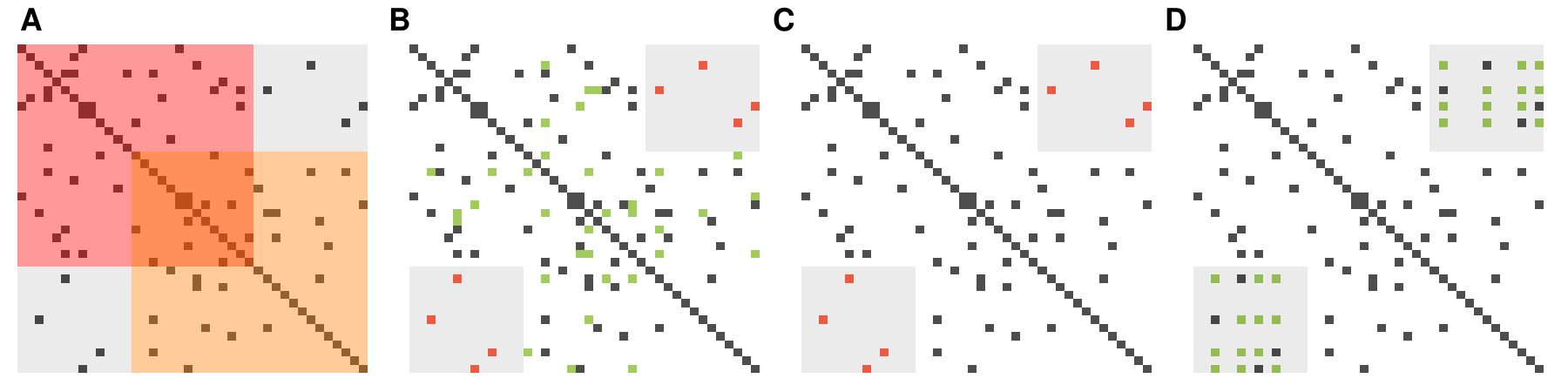}\vspace*{-3mm}
\caption{Example of graph recovery via \madgq{}. 
{\bf (A)} Support of $\Theta$ ($p=40$) and observed node pairs set $O$ (colored area), and $O^c$ (grey area). 
{\bf (B)} Support of the \madgq{} matrix $\tilde\Theta$: the green entries denote false positive edges, whereas the red ones are false negatives. No false negatives are in $O$ as per Theorem~\ref{theo:nofnd}.
{\bf (C)} Recovered graph $\tilde E^{\tau}$ (Equation~(\ref{eq:Etau})) with $\tau=\nu/2\in[\delta,\nu-\delta)$. In this example, the largest magnitude of $\Theta_{O^c}$, $\gamma$, is sufficiently small, so Condition~(A3) is satisfied, and $\tilde E^\tau_O$ perfectly match the true edge set $E_O$, as per Theorem~\ref{theo:popOABCapp}. 
{\bf (D)} Fully recovered edge set $\mathcal{E}^\tau$ via Algorithm~\ref{algo:fullrecoveryK2}, consisting of the union of the recovered graph in $O$ as in (C), with the minimal superset of edges in $O^c$.}
\label{fig:poprecoveryABCapp}
\end{figure}

In Figure~\ref{fig:poprecoveryABCapp} (analogous to Figure~\ref{fig:poprecovery}, in Section~\ref{sec:popK}) we present an example with $p=40$ nodes where the precision matrix $\Theta$ in panel (A) (see Appendix~\ref{app:figdetails} for more details) contains four edges in $O^c$, but the largest magnitude $\gamma = 0.00625$ of these edges is small enough to ensure $\delta<\nu/2$. Indeed, since $\nu=0.025$, $\lambda_{\rm min}=  0.93688$, $q=13$, and $d_{O^c}=1$, and $d_B=2$, we have $\gamma=0.00625<\frac{-b+\sqrt{b^2+2a\nu}}{2a}=0.0526$, so Condition~(A3) is satisfied, which ensures $\delta<\nu/2$. In Figure~\ref{fig:poprecoveryABCapp}(B) we display the support of the \madgq{} matrix $\tilde\Theta$, which contains several false positives in $O$ (green), four false negatives in $O^c$ (red), but no false negatives in $O$, in agreement with Theorem~\ref{theo:nofnd}. Finally, in Figure~\ref{fig:poprecoveryABCapp}(C) we plot the edge set $\tilde E^{\tau}$ (Equation~(\ref{eq:Etau})) with $\tau=\nu/2\in[\delta,\nu-\delta)$, which perfectly matches the graph over the set $O$ as per Theorem~\ref{theo:popOABCapp}, since all false positives had magnitudes smaller than the threshold $\nu/2$. Figure~\ref{fig:poprecoveryABCapp}(D) is discussed in Appendix~\ref{sec:OcABC}.\less

\subsection{Recovery in \texorpdfstring{$O^c$}{Lg} via Oracle Distortions in \texorpdfstring{$O$}{Lg}}\label{sec:OcABC}\less
This section is organized as Section~\ref{sec:popOc}, but with results for the special cases $K=2$. We first study how the distortions propagate across the entries of $\tilde\Theta_O$ depending on the precise position of the edges in $O^c=(A\times C)\cup (C\times A)$. Note that, because of the symmetry of the set $E_{O^c}$, for convenience we will often state our results just in terms of the portion $A\times C$.  We then introduce the definition of minimal superset of the edge set $E_{O^c}$ based on the oracle knowledge of the distortions. The oracle results presented in this subsection, while impractical, constitute the theoretical foundations of the more practical approach proposed in Appendix~\ref{sec:fullABC}, where the oracle knowledge of the distortions is not required. 

\paragraph*{Notation} The following graph theoretic terminology will help characterize the graph recovery in $O^c$. This notation is the same as the one in Section~\ref{sec:popOc}, and we restate it here for convenience. Let $U \subseteq V$ be an arbitrary subset of nodes and let $G_U$ denote the subgraph of $G$ induced by $U$, i.e., the graph whose vertex set is $U$ and edge set is $E \cap (U\times U)$; indeed, $G_V = G$. We will let $N(i):=\{j\in V: (i,j)\in E\}$ denote the neighborhood of $i$. Two nodes $i$ and $j$ are neighbours (a.k.a. adjacent) if $i\in N(j)$, or equivalently, $j\in N(i)$. We further let $N_U(i):=N(i)\cap U$ be the set of neighbours of $i$ that are in $U$. Given two subsets $U,F\subseteq V$, we let $N_U(F) := \bigcup_{i\in F}N_U(i)\subseteq U$ be the set of nodes in $U$ that are neighbours of one or more nodes in $F$. Two nodes $i,j\in U$ are $U$-connected if they are  connected through some path completely within $U$.

\subsubsection{Distortion propagation}\less 
We now investigate how the distortions propagate in $\tilde\Theta_O$ when the assumption $\Theta_{O^c}=0$ is incorrect, in the special case $K=2$. We will say that the $(i,j)$ entry of $\tilde\Theta$ is distorted if the difference
\begin{equation}\label{eq:deltaij}
\delta_{ij} ~:= ~\Theta_{ij}-\tilde\Theta_{ij}
\end{equation}
is nonzero. The following theorem, analogous to Theorem~\ref{theo:distprK}, precisely describes the relationship between distortions and edges in $O^c$ in the special case $K=2$:\more

\begin{theorem}[\scbf{Distortion Propagation  ($K=2$)}]\label{theo:distpropABC}
We have: 
\begin{enumerate}[(i).]
\item For $i\in A$ and $j\in C$, 
\begin{eqnarray}\label{eq:deltaiiABC}
\delta_{ii}> 0 ~~~\Leftrightarrow~~~ \Theta_{iC}\neq 0\\
\delta_{jj}> 0 ~~~\Leftrightarrow~~~ \Theta_{jA}\neq 0
\end{eqnarray}
\item For $(i,j)\in A\times (A\cup B)$, $i\neq j$, we have [a.e.]
\begin{equation}\label{eq:deltaAB}
\delta_{ij}\neq 0 ~\Leftrightarrow~ \exists h\in N_C(i) \text{ and } \exists k\in N_C(j) \text{ s.t. } h=k \text{ or } h \text{ is } C\text{-connected to } k.
\end{equation}
For $(i,j)\in C\times (B\cup C)$, $i\neq j$, we have [a.e.]
\begin{equation}\label{eq:deltaBC}
\delta_{ij}\neq 0 ~\Leftrightarrow~ \exists h\in N_A(i) ~\text{and}~ \exists k\in N_A(j) \text{ s.t. } h=k \text{ or } h \text{ is } A\text{-connected to } k.
\end{equation}
\item
For any $(i,j) \in A\times (A\cup B)$, $i< j$, if $\delta_{ij}\neq 0$ then $\delta_{ii}>0$.\\
For any $(i,j) \in C\times (B\cup C)$, $j<i$, if $\delta_{ij}\neq 0$ then $\delta_{ii}>0$.
\end{enumerate}
\end{theorem}\more

Part~(i) of Theorem~\ref{theo:distpropABC} states that there is a distortion on the diagonal entry of node $i\in A$ if and only if node $i$ is connected to some node in $C$; similarly, there is a distortion on the diagonal entry of node $j\in C$ if and only if node $j$ is connected to some node in $A$. Note that a diagonal distortion $\delta_{kk}$, $k\in A\cup C$, is always nonnegative, so $\delta_{kk}>0$ is the only possible kind of diagonal distortion. Part~(ii) states that an off-diagonal entry $(i,j)\in A\times (A\cup B)$ of the \madgq{} matrix $\tilde\Theta$ is distorted if and only if nodes $i$ and $j$ are connected through some path of length $>1$ completely within $C\cup\{i,j\}$. Similarly, an off-diagonal entry $(i,j)\in C\times (B\cup C)$ of the \madgq{} matrix $\tilde\Theta$ is distorted if and only if nodes $i$ and $j$ are connected through some path of length $>1$ completely within $A\cup\{i,j\}$. Finally, part~(iii) reveals that an off-diagonal entry $(i,j)$ in the portions $A\times (A\cup B)$ or $C\times (B\cup C)$ of $\tilde\Theta$ is distorted only if the corresponding diagonal entry $(i,i)$ in $A\times A$ or $C\times C$ is also distorted. The following corollary, analogous to Corollary~\ref{coro:distpropagK}, highlights other important implications of Theorem~\ref{theo:distprK} in the special case $K=2$:\more
\begin{corollary}\label{coro:distpropagABC}
Let $(i,j)\in A\times C$, $k\in A\cup B\setminus\{i\}$, and $h\in B\cup C\setminus\{j\}$. Then
\begin{enumerate}
    \item[(i).] $\Theta_{ij}\neq 0\Rightarrow \delta_{ii},\delta_{jj}> 0$.
    \item[(ii).] $\Theta_{ij}\neq 0\Rightarrow \delta_{ik}\neq 0$, for all $k$ where $k$ is $(C\cup\{k\})$-connected to $j$ [a.e.].\\
 $\Theta_{ij}\neq 0\Rightarrow \delta_{hj}\neq 0$, for all $h$ where $h$ is $(A\cup\{h\})$-connected to $i$ [a.e.].
\end{enumerate}
\end{corollary}
Corollary~\ref{coro:distpropagABC} states that if nodes $i\in A$ and $j\in C$ are neighbours, then, in all situations, both $\tilde\Theta_{ii}$ and $\tilde\Theta_{jj}$ will be distorted entries of the \madgq{} matrix $\tilde\Theta$.  On the other hand, an off-diagonal distortion is generated on the row $i$ of the portion $A\times (A\cup B)$ as long as at least one node $k\in A\cup B\setminus \{i\}$ is either a neighbour of $j$ or a neighbour of a node in $C$ that is $C$-connected to $j$. Similarly, to have an off-diagonal distortion on the column $j$ of the portion $(B\cup C)\times C$ of $\tilde\Theta$, we need that at least one node $h\in B\cup C\setminus \{j\}$ is either neighbour of $i$ or neighbour of a node in $A$ that is $A$-connected to $i$. Hence, if there is no such node $k$ or $h$, there will be no off-diagonal distortion on the row $i$ of the portion $A\times (A\cup B)$ or in column $j$ of the portion $ (B\cup C)\times C$. \less\less

\subsubsection{Superset minimality}\label{sec:minimality}\less\less
In this section we establish that with incomplete covariance information it is at least possible to recover a minimal superset of $E_{O^c}$ by exploiting the types of distortions considered in the Distortion Propagation Theorem~\ref{theo:distpropABC}. The minimal superset is defined as follows:
\begin{definition}[\scbf{Minimal Superset of $E_{O^c}$ ($K=2$)}]\label{def:minimality}
Let 
\begin{equation}
\mathcal{D}_Q(\Sigma, O):=\left\{(i,j)\in Q: \delta_{ij}\neq 0\right\}
\end{equation}
be the set of known distortions in $\tilde\Theta_O$, where $Q\subseteq O$, and let 
\begin{equation}\label{eq:admissABC}
\mathcal{A}(\Sigma,O,Q):=\left\{\Sigma'\succ 0: ~\Sigma'_O=\Sigma_O,~ \mathcal{D}_Q(\Sigma',O)=\mathcal{D}_Q(\Sigma,O)\right\}
\end{equation}
be the set of all positive definite covariance matrices that agree with the observed $\Sigma_O$ and distortions $\mathcal{D}_Q(\Sigma, O)$. A set $\mathcal{S}$ is the minimal superset of $E_{O^c}$ with respect to $\Sigma_O$ and $\mathcal{D}_Q(\Sigma,O)$ if it satisfies the following properties:
\begin{enumerate}
\item[(i).] $\forall\Sigma'\in\mathcal{A}(\Sigma,O,Q)$ we have $E_{O^c}'\subseteq \mathcal{S}$;
\item[(ii).] $\forall\mathcal{S}' \subsetneq \mathcal{S}$, $\exists\Sigma'\in\mathcal{A}(\Sigma,O,Q)$ such that $E'_{O^c} \cap (\mathcal{S}\setminus \mathcal{S}') \neq \emptyset$.   
\end{enumerate}
\end{definition}

Thus, a minimal superset $\mathcal{S}$  of $E_{O^c}$ given the set of known (oracle) distortions $\mathcal{D}_Q(\Sigma,O)$ is the smallest possible superset in the sense that it includes all plausible graphical structures $E_{O^c}$ that would induce the same known (oracle) distortions in the \madgq{} Schur complements. Thus, any other set $\mathcal{S}'\neq\mathcal{S}$ is either not a superset of $E_{O^c}$, or it is larger than $\mathcal{S}$, or it does not include one or more plausible edges. An expression of the minimal superset defined in Definition~\ref{def:minimality} is given by\less
\begin{equation}
\mathcal{S} := \bigcup\limits_{\Sigma'\in \mathcal{A}(\Sigma,O,Q)}\left\{(i,j)\in O^c:~ \left[\Sigma'^{-1} \right]_{ij}\neq 0\right\}
\end{equation}\less

In the following, we will consider the cases where we have oracle knowledge of all distortions on the diagonal entries or on the off-diagonals.\less\less

\subsubsection{Oracle minimal superset recovery}\less\less
Towards the statement of our main Theorem~\ref{theo:abcOc} for the oracle recovery of $E_{O^c}$, let us first define some quantities. Define the set\less
\begin{equation}\label{eq:supersetAC}
\mathcal{S}_{\rm diag} ~:=~ O^c\cap (D_{\rm diag}\times D_{\rm diag})
\end{equation}\less
where $D_{\rm diag}=\{i\in A\cup C: ~\delta_{ii}>0\}$ is the set of nodes in $A\cup C$ with a diagonal distortion. 
Moreover, define the set 
\begin{equation}\label{eq:minsupoff}
    \mathcal{S}_{\rm off} ~:=~ O^c\cap (D_{\rm off}\times D_{\rm off})
\end{equation}
where $D_{\rm off}= \{i\in A\cup C:\exists k\neq i, (i,k)\in O, \delta_{ij}\neq 0\}$ is the set of nodes in $A\cup C$ that are incident to at least one distorted edge (e.g. a false positive) in $O$. Furthermore, consider the following assumption:\less
\begin{enumerate}
\item[(A4$^\star$).] For every $i\in N_A(C)$, there exists at least one node $k\in (A\cup B)\setminus\{i\}$ that is $(C\cup\{k\})$-connected to some node in $N_C(i)$, and for every $j\in N_C(A)$, there exists at least one node $h\in (B\cup C)\setminus\{j\}$ that is $(A\cup\{h\})$-connected to some node in $N_A(j)$.\less
\end{enumerate}

\noindent We are now ready to state our main theorem for the oracle recovery of $E_{O^c}$:\more

\begin{theorem}[\scbf{Oracle Minimal Superset of $E_{O^c}$ ($K=2$)}]\label{theo:abcOc} 
Let $\mathcal{S}_{\rm diag}$ and $\mathcal{S}_{\rm off}$ be the sets in Equations~(\ref{eq:supersetAC}) and (\ref{eq:minsupoff}). Then, in the sense of Definition~\ref{def:minimality}:
\begin{enumerate}[(i).]
\item The set $\mathcal{S}_{\rm diag}$ is the minimal superset of $E_{O^c}$ given the set of diagonal distortions $\mathcal{D}_{\rm diag}(\Sigma,O)=\{(i,i): ~i\in A\cup C, ~\delta_{ii}>0\}$.
\item The set $\mathcal{S}_{\rm off}$ equals $\mathcal{S}_{\rm diag}$ [a.e.] and is the minimal superset of $E_{O^c}$  given the set of off-diagonal distortions $\mathcal{D}_{\rm off}(\Sigma,O):=\{(i,j)\in O\setminus (B\times B): i\neq j, ~\delta_{ij}\neq 0\}$ if and only if assumption (A4) holds.
\end{enumerate}
\end{theorem}

Part {\it (i)} of the theorem establishes that $\mathcal{S}_{\rm diag}$ is the minimal superset of $E_{O^c}$ in the sense of Definition~\ref{def:minimality}, based on the knowledge of the distortions on the diagonal entries $\{(i,i):i\in A\cup C\}$ of $\tilde\Theta$. Part {\it (ii)} of the theorem establishes that, under Assumption~(A4$^\star$), we have $\mathcal{S}_{\rm off}=\mathcal{S}_{\rm diag}$ [a.e.]. Indeed, assumption~(A4$^\star$) guarantees that for every row in $A\times C$ containing an edge (i.e. every $i\in N_A(C)$) we will have at least one off-diagonal distortion on the same row in the portion $A\times (A\cup B)$ of $\tilde\Theta$; and for every column in $A\times C$ containing an edge (i.e. every $j\in N_C(A)$) we will have at least one off-diagonal distortion on the same column in the portion $(B\cup C)\times C$ of $\tilde\Theta$. In other words, condition (A4$^\star$) guarantees that all diagonal distortions can be retrieved from the off-diagonal ones [a.e.], so that, in Equations~(\ref{eq:supersetAC}) and (\ref{eq:minsupoff}) $D_{\rm off}=D_{\rm diag}$ and thereby $\mathcal{S}_{\rm off}=\mathcal{S}_{\rm diag}$. \more

In this special case $K=2$, we can provide additional results about the minimal superset $\mathcal{S}_{\rm diag}$ (same as $\mathcal{S}_{\rm off}$ [a.e.] under Assumption~(A4$^\star$)):

\begin{lemma}[\scbf{Cardinality of plausible edge sets in $O^c$ ($K=2$)}]\label{lemma:cardplausibABC}
Let $m=\min\{|A\cap D_{\rm diag}|,|C\cap D_{\rm diag}|\}$ and $M=\max\{|A\cap D_{\rm diag}|,|C\cap D_{\rm diag}|\}$. Then, 
\begin{enumerate}
\item[(a)] $M\le|E_{AC}|\le mM$.
\item[(b)] The number of plausible graph structures in $O^c$ with $\kappa$ edges contained in the minimal superset $\mathcal{S}_{\rm diag}$ in Equation~(\ref{eq:supersetAC}) is equal to
\begin{equation}\label{eq:xik}
\xi_\kappa :=~ \binom{mM}{\kappa}~-~\sum_{j=1}^M(-1)^{j+1}\hspace{-4mm}\sum_{
\tiny\begin{array}{c}
0\le x\le m\\
0\le y\le M\\
s.t.~x+y=j
\end{array}
}\hspace{-4mm}\binom{m}{x}\binom{M}{y}\binom{(m-x)(M-y)}{\kappa}
\end{equation}
This number $\xi_\kappa$ can be much smaller than $\varphi_\kappa:=\binom{|A||C|}{\kappa}$, which is the number of graphs with $\kappa$ edges that may be contained in $O^c$.
\end{enumerate}
\end{lemma}
The properties of $\mathcal{S}_{\rm diag}$ highlighted by Lemma~\ref{lemma:cardplausibABC} are due to the fact that the Cartesian product $\mathcal{S}_{\rm diag}\cap(A\times C)=(A\cap D_{\rm diag})\times (C\cap D_{\rm diag})$ is the minimal superset of $E_{AC}$, so it must contain at least one true edge in every row and in every column. Thus, the number of edges in $O^c$ is bounded between $M$ and $mM$. Moreover, $E_{AC}$ is one of the several plausible subgraphs that would have induced distortions on the same observed diagonal positions. Equation~(\ref{eq:xik}) gives us the exact number $\xi_\kappa$ of all such plausible subgraphs in $O^c$ if the number of edges in $O^c$ was $\kappa$. Most interestingly, the number $\xi_\kappa$ can be very much smaller than the number $\varphi_\kappa=\binom{|A||C|}{\kappa}$ of all possible graph structures of $\kappa$ edges connecting nodes in $A$ to nodes in $C$. For example, if $|A|=|C|=7$, $m=4$ and $M=5$, then $\xi_5=240<\varphi_5=1,906,884$.

An important question one may ask is: Of all the plausible graphical structures contained in $\mathcal{S}_{\rm diag}$, which one could we pick? If $m=1$, then $\mathcal{S}_{\rm diag}=E_{O^c}$, i.e. $\mathcal{S}_{\rm diag}$ recovers $E_{O^c}$ exactly. In the case $m>1$, several criteria may be chosen. We could decide to take the densest possible graph, which is $\mathcal{S}_{\rm diag}$. Alternatively, we could assume $E_{AC}$ is the sparsest possible, i.e. $|E_{AC}|=M$, and then follow some criterion to pick one of the $\xi_M$ plausible graphs. Possible criteria include: (a) randomly pick one of the plausible graphs (if $|E_{AC}|=M$ truly, then we would pick it with probability $1/\xi_M$, possibly much larger than the probability $1/\varphi_M$ of picking it from the set of all possible subgraphs in $A\times C$); (b) pick the subgraph that produces the smallest number of paths in the full graph; (c) pick the subgraph that produces the smallest maximum node degree; or (d) pick the graph that yields the smallest average path length. We leave the investigation of these criteria as future research.\more

\subsection{Full graph recovery}\label{sec:fullABC}\less
We now condense the results of Appendices~\ref{sec:OABC} and \ref{sec:OcABC} into one algorithm, Algorithm~\ref{algo:fullrecoveryK2}, for the recovery of the full edge set $E$. This algorithm does not require the oracle knowledge of the distortions for the recovery of the edges in $O^c$, but instead it only exploits the off-diagonal entries in the \madgq{} matrix $\tilde\Theta$ that are identified as distorted because their magnitudes are too small. Theorem~\ref{theo:fullrecoveryK2} establishes the properties of the output edge set $\mathcal{E}^\tau$ of Algorithm~\ref{algo:fullrecoveryK2}, and requires the following assumption:\more
\begin{enumerate}
    \item[(A5$^\star$).] For every $(i,j)$ such that $\delta_{ij}\neq 0$,  there exists $ h\neq i$ such that $0<|\tilde\Theta_{ih}|<\delta$.
\end{enumerate}\more

\begin{theorem}[\scbf{GQ Graph recovery (population case, $K=2$)}]\label{theo:fullrecoveryK2}
If assumptions (A4$^\star$)-(A5$^\star$) and one of (A1), (A2), or (A3) hold, then, for any $\tau\in [\delta,\nu-\delta)$, the output edge set $\mathcal{E}^\tau$ of Algorithm~\ref{algo:fullrecoveryK2} satisfies  $\mathcal{E}_O^\tau=E_O$ and $\mathcal{E}_{O^c}^\tau=\mathcal{S}_{\rm diag} $, where $\mathcal{S}_{\rm diag}$ is the minimal superset of $E_{O^c}$  in Equation~(\ref{eq:supersetAC}).
\end{theorem}
Theorem~\ref{theo:fullrecoveryK2} combines Theorem~\ref{theo:popOABCapp} and Theorem~\ref{theo:abcOc}. Each of the assumptions (A1), (A2), (A3) alone guarantees that $\delta<\nu/2$ so, for any $\tau\in[\delta,\nu-\delta)$, the thresholded edge set $\mathcal{E}_O^\tau\equiv \tilde E_O^\tau$ equals the true edge set $E_O$, as per Theorem~\ref{theo:popOABCapp}. This means that no off-diagonal entry of $\Theta_O$ has magnitude in the interval $(0,\tau]$. Hence, if $0<|\tilde\Theta_{ij}|<\tau$, then $\delta_{ij}\neq 0$. Thus, under Assumption~(A5$^\star$), the set $W_\tau$ in Algorithm~\ref{algo:fullrecoveryK2} contains every node $i$ that is associated with at least one off-diagonal distortion. Therefore, $W_\tau$ matches the set $D_{\rm off}$ in Equation~(\ref{eq:minsupoff}) and thereby $\mathcal{E}_{O^c}^\tau\equiv\mathcal{S}_{\rm off}$ where, under Assumption (A4$^\star$), $\mathcal{S}_{\rm off}\equiv 
\mathcal{S}_{\rm diag}$ by Theorem~\ref{theo:abcOc}. An example of full graph recovery is shown in Figure~\ref{fig:poprecoveryABCapp}(D).

\begin{algorithm}[t]\normalsize
\scbf{Input}: $A,B,C\subset V$, $\Sigma_O$, $\tau>0$\;
 \begin{enumerate}
    \item  Compute the \madgq{} matrix
    \[
    \tilde\Theta ~=~  \underset{\Theta\succ 0, ~\Theta_{O^c}=0}{\arg\max}~ \log\det \Theta - \sum_{(i,j)\in O}\Theta_{ij}\Sigma_{ij}
    \]
    \item Find the edge set $\tilde E_O^\tau=\left\{(i,j)\in O: i\neq j, |\tilde\Theta_{ij}|>\tau\right\}
    $.
    \item Obtain the node set
    \[W_\tau=\left\{i\in A\cup C:~\exists j\neq i, 0<|\tilde\Theta_{ij}|< \tau\right\}
    \]
    \item Obtain the set $\mathcal{U}_\tau=O^c\cap(W_\tau\times W_\tau)$.
\end{enumerate}
\scbf{Output}:  Edge set 
\begin{equation}\label{eq:fulledegqpopABC}
\mathcal{E}^\tau ~=~ \tilde E_O^\tau\cup \mathcal{U}_\tau\vspace{-6mm}
\end{equation}
\caption{\scbf{GQ graph recovery (population case, $K=2$)}}\label{algo:fullrecoveryK2}
\end{algorithm}

Finally, it is worthwhile to mention that in special situations, Algorithm~\ref{algo:fullrecoveryK2} can be used to deal with the general case $K\ge 2$. We can proceed as follows: (a) find a partition $A,B,C\subseteq V$ such that $O^c=(\cup_{k=1}^K(V_k\times V_k))^c \subseteq \Omega^c:=(A\times C)\cup (C\times A)$, and then (b) apply Algorithm~\ref{algo:fullrecoveryK2} based on the smaller portion $\Sigma_\Omega$ in place of $\Sigma_{O}$. However, this strategy is sub-optimal because the information carried by the observed set of covariances $\Sigma_{O\setminus\Omega}$ is discarded; also, a suitable partition $A,B,C\subseteq V$ satisfying $O^c\subseteq \Omega^c$ does not always exist. Algorithm~\ref{algo:fullrecoveryK} for the general case $K\ge 2$ in  Section~\ref{sec:popOc} is optimal and applies to all situations. 

\more

\subsection{Auxiliary results}\label{app:abcaux}
This section contains two lemmas establishing important properties of the \madgq{} matrix $\tilde\Theta$ (Equation~(\ref{eq:gq0})) in the special case $K=2$: Lemma~\ref{lemma:mdbias} presents $\tilde\Theta$ as an explicit function of $\Theta$, and Lemma~\ref{lemma:linfinityABCbounds} provides explicit bounds on the $\ell_\infty$ distortions between $\Theta$ and $\tilde\Theta$. These lemmas are widely used in the proofs of the results established in this Appendix~\ref{app:popcaseK2}.
\more

\begin{lemma}[\scbf{\madgq{} as a function of $\Theta$ ($K=2$)}]\label{lemma:mdbias}
Suppose $V_1=A\cup B$ and $V_2=B\cup C$, where $A,B,C$ is a partition of $V$, and $O^c=(A\times C)\cup (C\times A)$. The MAD$_{GQ}$ solution $\tilde\Theta$ in Equation~(\ref{eq:gq0}) has components:

\begin{eqnarray}
\tilde\Theta_{AC} &=& 0\\
\tilde\Theta_{AA} &=& \Theta_{AA} -\Theta_{AC}\Theta_{CC}^{-1}\Theta_{CA}\\
\tilde\Theta_{AB} &=& \Theta_{AB} -\Theta_{AC}\Theta_{CC}^{-1}\Theta_{CB}\\
\tilde\Theta_{BC} &=& \Theta_{BC}  -\Theta_{BA}\Theta_{AA}^{-1}\Theta_{AC}\\
\tilde\Theta_{CC} &=& \Theta_{CC} -\Theta_{CA}\Theta_{AA}^{-1}\Theta_{AC}\\
\label{eq:BBportion}
\tilde\Theta_{BB} &=& \Theta_{BB} - \Theta_{BC}\Theta_{CC}^{-1}\Theta_{CB} + \tilde\Theta_{BC}\tilde\Theta_{CC}^{-1}\tilde\Theta_{CB}\\
\nonumber&=& \Theta_{BB} - \Theta_{BA}\Theta_{AA}^{-1}\Theta_{AB} + \tilde\Theta_{BA}\tilde\Theta_{AA}^{-1}\tilde\Theta_{AB}
\end{eqnarray}
\end{lemma}

\more

\begin{lemma}[\scbf{\madgq{} $\ell_\infty$-distortion bounds ($K=2$)}]\label{lemma:linfinityABCbounds}
Let $V_1=A\cup B$ and $V_2=B\cup C$, where 
$A,B,C$ is a partition of $V=\{1,...,p\}$.  Let $\Theta\succ 0$ be a $p\times p$ positive definite matrix, and let $\Sigma=\Theta^{-1}$. Let $\tilde\Theta$ be the \madgq{} matrix (Equation~(\ref{eq:gq0})) based on $\Sigma_O$, where $O=(V_1\times V_1)\cup(V_2\times V_2)$. Moreover, define $d_{AB}:={\rm rd}(\Theta_{AB})$ and $
\gamma_{AB}:=\Vert\Theta_{AB}\Vert_\infty$, where ${\rm rd}(M)=\max_i|\{(i,j):M_{ij}\neq 0\}|$ denotes the max row-degree of the matrix $M$, and let $\theta=\lambda_{\min}(\Theta)>0$ be the smallest eigenvalue of $\Theta$. Define $d_{BA},d_{AC},d_{CA}, \gamma_{AB},\gamma_{BC}$, $\gamma_{AC}$ analogously. Then, we have
\begin{eqnarray}
\Vert \Theta_{AC}-\tilde\Theta_{AC} \Vert_\infty & = &\gamma_{AC}\\
\Vert \Theta_{AB}-\tilde\Theta_{AB} \Vert_\infty & \le & d_{AC}d_{BC}\gamma_{AC}\gamma_{BC}\theta^{-1} \\
\Vert \Theta_{BC}-\tilde\Theta_{BC} \Vert_\infty & \le & d_{BA}d_{CA}\gamma_{AB}\gamma_{AC}\theta^{-1} \\
\Vert \Theta_{AA}-\tilde\Theta_{AA} \Vert_\infty & \le & d_{AC}^2\gamma_{AC}^2\theta^{-1} \\
\Vert \Theta_{CC}-\tilde\Theta_{CC} \Vert_\infty & \le & d_{CA}^2\gamma_{AC}^2\theta^{-1}\\
\Vert \Theta_{BB}-\tilde\Theta_{BB} \Vert_\infty & \le & \gamma_{AC}^2|A|^2d_{AC}^2\theta^{-3}(d_{BA}^2\gamma_{AB}^2+d_{BC}^2\gamma_{BC}^2)\\
\nonumber&&+\gamma_{AC}2|C|d_{BA}d_{BC}d_{CA}\gamma_{AB}\gamma_{BC}\theta^{-2}\\
\Vert \Theta_{BB}-\tilde\Theta_{BB} \Vert_\infty & \le & \gamma_{AC}^2|C|^2d_{CA}^2\theta^{-3}(d_{BA}^2\gamma_{AB}^2+d_{BC}^2\gamma_{BC}^2)\\
\nonumber&&+\gamma_{AC}2|A|d_{BA}d_{BC}d_{AC}\gamma_{AB}\gamma_{BC}\theta^{-2}
\end{eqnarray}
\end{lemma}

\subsection{Proofs}\label{app:abcproofs}
This section contains the proofs of all theorems, lemmas, and corollaries of Appendices~\ref{sec:OABC}--\ref{app:abcaux}.

\begin{proof}[Proof of Theorem~\ref{theo:popOABCapp} (Exact Graph Recovery in $O$ ($K=2$))] We show that each condition (A1), (A2), and (A3) guarantees that $\delta< \nu/2$, so by Lemma~\ref{lemma:exactO} $\tilde E_O^{\tau} = E_O$ for any $\tau\in [\delta,\nu-\delta)$, and ${\rm sign}(\tilde\Theta_{ij})={\rm sign}(\Theta_{ij})$,  $\forall(i,j)\in E_O$.

\paragraph*{Condition (A1)} Condition (A1) implies $\delta=0$ by Theorem~\ref{thm:identifiability}, hence, we have $\delta<\nu/2$.

\paragraph*{Condition (A2)} Condition (A2) is a special case of Condition (A3), where $B$ is disconnected from $A$ and from $C$, which corresponds to $\gamma_B,d_B=0$, so that the coefficients specified in Condition (A3) reduce to $a=d_{O^c}^2\lambda_{\min}^{-1}$ and $b=0$, yielding the constraint $0<\gamma<\frac{-b+\sqrt{b^2+2a\nu}}{2a}=\sqrt{\frac{\nu\lambda_{\min}}{2d_{O^c}^2}}$.

\paragraph*{Condition (A3)} Let $d_{O^c}:=\max\{d_{AC},d_{CA}\}$, $d_B:=\max\{d_{BA},d_{BC}\}$,  $\gamma_B:=\max\{\gamma_{AB},\gamma_{BC}\}$, and $q:=\max\{|A|,|C|\}$. By Lemma~\ref{lemma:linfinityABCbounds}, we obtain
\[
\max\{\Vert\Theta_{AA}-\tilde\Theta_{AA}\Vert_\infty,\Vert\Theta_{CC}-\tilde\Theta_{CC}\Vert_\infty\} ~\le~\gamma^2 d_{O^c}^2\lambda_{\min}^{-1}
\]
\[
\max\{\Vert\Theta_{AB}-\tilde\Theta_{AB}\Vert_\infty, \Vert\Theta_{BC}-\tilde\Theta_{BC}\Vert_\infty \}~\le~\gamma d_{O^c}d_B\gamma_B\lambda_{\min}^{-1}
\]
and
\[
\Vert\Theta_{BB}-\tilde\Theta_{BB}\Vert_\infty \le \gamma^2 q^2d_{O^c}^2\lambda_{\min}^{-3}2d_B^2\gamma_B^2+\gamma 2qd_B^2d_{O^c}\gamma_B^2\lambda_{\min}^{-2}
\]
We now combine the three inequalities above into the following bound on $\delta$:
\begin{eqnarray*}
\delta &\le&\Vert\Theta_O-\tilde\Theta_O\Vert_\infty \\
&\le& \max\left\{\gamma^2 d_{O^c}^2\lambda_{\min}^{-1},~\gamma d_{O^c}d_B\gamma_B\lambda_{\min}^{-1}, ~\gamma^2 q^2d_{O^c}^2\lambda_{\min}^{-3}2d_B^2\gamma_B^2+\gamma 2qd_B^2d_{O^c}\gamma_B^2\lambda_{\min}^{-2}\right\}\\
&\le& \gamma^2 d_{O^c}^2\lambda_{\min}^{-1}+\gamma d_{O^c}d_B\gamma_B\lambda_{\min}^{-1}+\gamma^2 q^2d_{O^c}^2\lambda_{\min}^{-3}2d_B^2\gamma_B^2+\gamma 2qd_B^2d_{O^c}\gamma_B^2\lambda_{\min}^{-2}\\
&=& a\gamma^2+b\gamma
\end{eqnarray*}
where
\begin{eqnarray*}
a &=& d_{O^c}^2\left(\lambda_{\min}^{-1}+2q^2d_B^2\gamma_B^2\lambda_{\min}^{-3}\right)\\
b &=& d_{O^c}\left(d_B\gamma_B\lambda_{\min}^{-1}+2qd_B^2\gamma_B^2\lambda_{\min}^{-2}\right)
\end{eqnarray*}
For $\gamma>0$, the quadratic inequality 
\[a\gamma^2+b\gamma<\nu/2\]
has solution
$0<\gamma<\frac{-b+\sqrt{b^2+2a\nu}}{2a}$. Therefore, if $0<\gamma<\frac{-b+\sqrt{b^2+2a\nu}}{2a}$, then $\delta<\nu/2$.
\end{proof}

\more

\begin{proof}[Proof of Theorem~\ref{theo:distpropABC} (Distortion Propagation ($K=2$))] ~\less
\paragraph*{(i)} We have $V_1=A\cup B$, so if $i\in A$ then $ i\in V_1$, $\delta_{ii}=\delta_{ii}^{(1)}$, and $H_i=V_1^c=C$. Therefore, by Theorem~\ref{theo:distprK} part (i), $\delta_{ii}>0$ if and only if $\Theta_{iC}=\Theta_{iH_i}\neq 0$. Similarly, $V_2=B\cup C$, so if $j\in C$ then $ j\in V_2$, $\delta_{jj}=\delta_{jj}^{(2)}$, and $H_j=V_2^c=A$. Therefore, by Theorem~\ref{theo:distprK} part (i), $\delta_{jj}>0$ if and only if $\Theta_{jA}=\Theta_{jH_j}\neq 0$.
\item If $(i,j)\in A\times (A\cup B)$, $i\neq j$, then $i,j\in V_1$, $\delta_{ij}=\delta_{ij}^{(1)}$, and $H_i=V_1^c=C$. Therefore, by Theorem~\ref{theo:distprK} part (ii), almost everywhere, $\delta_{ij}\neq 0$ if and only if $\exists h\in N_C(i) $ and $\exists k\in N_C(j)$ such that $h=k$ or $h \text{ is } C$-connected to  $k$. 
Similarly, if $(i,j)\in C\times (B\cup C)$, $i\neq j$, then $i,j\in V_2$, $\delta_{ij}=\delta_{ij}^{(2)}$, and $H_i=V_2^c=A$. Therefore, by Theorem~\ref{theo:distprK} part (ii), almost everywhere,  $\delta_{ij}\neq 0$ if and only if $\exists h\in N_A(i) $ and $\exists k\in N_A(j)$ such that $h=k$ or $h \text{ is } A$-connected to  $k$.
\paragraph*{(ii)}  If $(i,j)\in A\times (A\cup B)$, $i<j$, then $i,j\in V_1$, $\delta_{ij}=\delta_{ij}^{(1)}$, $\delta_{ii}=\delta_{ii}^{(1)}$, and $\delta_{jj}=\delta_{jj}^{(1)}$, so by Theorem~\ref{theo:distprK} part (iii), if $\delta_{ij}\neq 0$ then $\delta_{ii}>0$ and $\delta_{jj}>0$. Similarly, if $(i,j)\in C\times (B\cup C)$, $j<i$, then $i,j\in V_2$, $\delta_{ij}=\delta_{ij}^{(2)}$, $\delta_{ii}=\delta_{ii}^{(2)}$, and $\delta_{jj}=\delta_{jj}^{(2)}$, so by Theorem~\ref{theo:distprK} part (iii), if $\delta_{ij}\neq 0$ then $\delta_{ii}>0$ and $\delta_{jj}>0$.
\end{proof}

\newpage

\begin{proof}[Proof of Corollary~\ref{coro:distpropagABC}]
~\less
\paragraph*{(i)} If $\Theta_{ij}\neq 0$ with $(i,j)\in A\times C$, then $\Theta_{iC}\neq 0$ and $\Theta_{jA}\neq 0$, so by Theorem~\ref{theo:distpropABC} part (i) we have $\delta_{ii}\neq 0$ and $\delta_{jj}\neq 0$.
\paragraph*{(ii)}  If $\Theta_{ij}\neq 0$ with $(i,j)\in A\times C$, and $ k\in A\cup B\setminus\{i\}  $ is $(C\cup \{k\})$-connected to $j$, then, by Theorem~\ref{theo:distpropABC} part (ii) we have $\delta_{ik}\neq 0$ [a.e.]. Similarly, if $\Theta_{ij}\neq 0$ with $(i,j)\in A\times C$, and if $h\in B\cup C\setminus\{j\}$ is $(A\cup \{h\})$-connected to $i$, then, by Theorem~\ref{theo:distpropABC} part (ii) we have $\delta_{hj}\neq 0$ [a.e.].
\end{proof}

\more

\begin{proof}[Proof of Theorem~\ref{theo:abcOc} (Oracle Minimal Superset of $E_{O^c}$ ($K=2$))]  ~\less
\paragraph*{(i)} First, we prove that $\mathcal{S}_{\rm diag}$ in Equation~(\ref{eq:supersetAC}) enjoys property (i) of a minimal superset (Definition~\ref{def:minimality}): $\forall\Sigma'\in\mathcal{A}(\Sigma,O,Q)$ we have $E_{O^c}'\subseteq\mathcal{S}_{\rm diag}$. We prove this by contradiction. Suppose there exists $\Sigma'\in\mathcal{A}(\Sigma,O,Q)$ such that $E_{AC}'\cap \mathcal{S}_{\rm diag}^c\neq \emptyset$, i.e. such matrix $\Sigma'$ induces one or more edges in $O^c$ and outside of $\mathcal{S}_{\rm diag}$. So, if  $\exists (h,l)\in E'_{AC}\cap\mathcal{S}_{\rm diag}^c$, we must have $h\in A\cap D_{\rm diag}^c$ and/or $l\in C\cap D_{\rm diag}^c$. Without loss of generality, suppose it is the case where $h\in A\cap D_{\rm diag}^c$. However, if $(h,l)\in E_{AC}'$, then Theorem~\ref{theo:distpropABC} part~(i) guarantees that there must be a diagonal distortion on the entry $(h,h)$ and so $h\in D_{\rm diag}$, which is a contradiction. 

We now prove that $\mathcal{S}_{\rm diag}$ enjoys property (ii) of a minimal superset (Definition~\ref{def:minimality}): $\forall\mathcal{S}' \subsetneq \mathcal{S}_{\rm diag}$, $\exists\Sigma'\in\mathcal{A}(\Sigma,O,Q)$ such that $E'_{AC} \cap (\mathcal{S}_{\rm diag}\setminus \mathcal{S}') \neq \emptyset$. Consider the following optimization problem
\begin{equation*}
    T(S,\tau) ~=~ \underset{T\succ 0, ~T_{O^c}=Z_\tau}{\arg\max} ~\log\det  T - \sum_{(i,j)\in O}T_{ij}\Sigma_{ij} \end{equation*}
where $Z_\tau$ is an entry set that is zero everywhere except over the symmetric set $S\subseteq O^c$ where all entries have value $\tau\in\mathbb{R}$. We have that $T(S,0)$ equals the \madgq{} matrix $\tilde\Theta$, which is guaranteed to exist (Lemma~\ref{lemma:maxdetopt}). For any $S\subseteq O^c$ and a sufficiently small $|\tau|\neq 0$, also the solution $T(S,\tau)$ exists and is uniquely identified by the constraints $\det T>0$ and $T_{O^c} = Z_\tau$, and by the first order condition  $[T^{-1}]_{O} = \Sigma_O $. Thus, the precision matrix $\Theta'=T(\mathcal{S}_{\rm diag},\tau)$ satisfies  $[\Theta^{'-1}]_O = \Sigma_O$ and $E'_{O^c}=\mathcal{S}_{\rm diag}$, and thereby  $\mathcal{D_{\rm diag}}(\Sigma',O)=\mathcal{D}_{\rm diag}(\Sigma,O)$ by Theorem~\ref{theo:distpropABC}, since in this case $K=2$ we have a distortion $\tilde\Theta'_{ii}\neq\Theta'_{ii}$ if and only if $\Theta'_{iC}\neq 0$ or $\Theta'_{Ai}\neq 0$. Hence, $\Sigma'\in\mathcal{A}(\Sigma,O,Q)$. This shows that the case where $E_{AC}=\mathcal{S}_{\rm diag}$ is possible, that is, there is no set $\mathcal{S}'\subsetneq \mathcal{S}_{\rm diag}$ that could contain all plausible edge sets in $O^c$.  

\paragraph*{(ii)} If Assumption~(A4$^\star$) holds, then Corollary~\ref{coro:distpropagABC} guarantees that for every  $(i,j)\in A\times C$, almost everywhere,
\[
\Theta_{ij}\neq 0 ~~~\Longrightarrow~~~  \delta_{ik}\neq 0, \delta_{hj}\neq 0, ~\text{for some}~k\in (A\cup B)\setminus\{i\}, ~h\in (B\cup C)\setminus\{k\} 
\]
Thus, $D_{\rm off}=D_{\rm diag} $ almost everywhere, and thereby $\mathcal{S}_{\rm off}=\mathcal{S}_{\rm diag}$ (Equations~(\ref{eq:supersetAC}) and (\ref{eq:minsupoff})), almost everywhere. If Assumption~(A4$^\star$) does not hold, then it is not guaranteed to have an off-diagonal distortion on the same row and column of every edge in $O^c$. That is, $\mathcal{S}_{\rm off}$ may contain at least one row or column less than $\mathcal{S}_{\rm diag}$, indeed missing at least one true edge of $E_{O^c}$.
\end{proof}

\vspace{0.5mm}

\begin{proof}[Proof of Lemma~\ref{lemma:cardplausibABC} (Cardinality of plausible edge sets in $O^c$ ($K=2$))] ~
\paragraph*{(i)} Note that $\mathcal{S}_{\rm diag}\cap (A\times C)=(A\cap D_{\rm diag})\times (C\cap D_{\rm diag})$  is the minimal superset of $E_{AC}$, so it must contain at least one true edge in every row and in every column. Thus, the number of edges in $O^c$ cannot be smaller than $M$, or larger than $|\mathcal{S}_{\rm diag}\cap (A\times C)|=mM$. 
\paragraph*{(ii)} Equation~(\ref{eq:xik}) is due to the Inclusion-Exclusion principle applied to the problem of arranging $\kappa$ objects over an $m\times M$ grid, with the constraint of at least one object on every row and at least one object on every column. Finally, we have $\xi_\kappa\le\binom{mM}{\kappa}\le\binom{|A||C|}{\kappa} =\varphi_\kappa$ because $mM\le |A| |C|$.
\end{proof}

\vspace{0.5mm}

\begin{proof}[Proof of Theorem~\ref{theo:fullrecoveryK2} (GQ Graph recovery (population case, $K=2$)]
Each of the assumptions (A1), (A2), (A3) alone guarantees that $\delta<\nu/2$ so, for any $\tau\in[\delta,\nu-\delta)$, the thresholded edge set $\mathcal{E}_O^\tau\equiv \tilde E_O^\tau$ in Equation~(\ref{eq:fulledegqpopABC}) equals the true edge set $E_O$, as per Theorem~\ref{theo:popOABCapp}. Moreover, notice that, by definition of $\nu$ (Equation~(\ref{eq:nu})), no off-diagonal entry of $\Theta_O$ may have magnitude in the interval $(0,\nu)$, so if $0<|\tilde\Theta_{ij}|<\nu$, then we must have $\delta_{ij}:=\Theta_{ij}-\tilde\Theta_{ij}\neq 0$. Thus, under Assumption~(A5$^\star$), for any $\tau\in[\delta,\nu]$, the set $W_\tau$ defined in Algorithm~\ref{algo:fullrecoveryK2},
\begin{equation*}
W_\tau=\left\{i\in A\cup C:~\exists j\neq i, 0<|\tilde\Theta_{ij}|< \tau\right\}
\end{equation*}
equals the set $D_{\rm off}$ in Equation~(\ref{eq:minsupoff}). Therefore, for any $\tau\in[\delta,\nu-\delta)$, $\mathcal{E}^\tau_{O^c}=\mathcal{S}_{\rm off}$ where, under Assumption~(A4$^\star$), $\mathcal{S}_{\rm off}$ is the minimal superset of $E_{O^c}$ [a.e.] based on oracle off-diagonal distortions, as per  Theorem~\ref{theo:abcOc} part (ii).
\end{proof}

\vspace{0.5mm}

\begin{proof}[Proof of Lemma~\ref{lemma:mdbias} (\madgq{} as a function of $\Theta$ ($K=2$))]
Let 
\begin{equation*}
\Sigma_1 = \left[\begin{array}{cc}
\Sigma_{AA} & \Sigma_{AB} \\
\Sigma_{BA} & \Sigma_{BB} 
\end{array} \right],~~~~
\Sigma_2 = \left[\begin{array}{cc}
\Sigma_{BB} & \Sigma_{BC} \\
\Sigma_{CB} & \Sigma_{CC} 
\end{array} \right]~
\end{equation*}
\begin{equation*}
\Theta_1 = \left[\begin{array}{cc}
\Theta_{AA} & \Theta_{AB} \\
\Theta_{BA} & \Theta_{BB} 
\end{array} \right], ~~~~
\Theta_2 = \left[\begin{array}{cc}
\Theta_{BB} & \Theta_{BC} \\
\Theta_{CB} & \Theta_{CC} 
\end{array} \right],
\end{equation*}
\begin{equation*}
\Theta_{1,C} = \left[\begin{array}{c}
\Theta_{AC}  \\
\Theta_{BC} 
\end{array} \right], ~~~~
\Theta_{2,A} = \left[\begin{array}{c}
\Theta_{BA}  \\
\Theta_{CA} 
\end{array} \right],
\end{equation*}
and $\Theta_{C,1}=\Theta_{1,C}^T$ and $\Theta_{A,2}=\Theta_{2,A}^T$. By computing Schur complements, we obtain
\begin{eqnarray}\label{sig1min1}
\Sigma_1^{-1} & = & \Theta_1-\Theta_{1,C}\Theta_{CC}^{-1}\Theta_{C,1}\nonumber\\
&=&
\left[ 
\begin{array}{cc}
\Theta_{AA} - \Theta_{AC}\Theta_{CC}^{-1}\Theta_{CA}, & \Theta_{AB} - \Theta_{AC}\Theta_{CC}^{-1}\Theta_{CB}\\
\Theta_{BA} - \Theta_{BC}\Theta_{CC}^{-1}\Theta_{CA}, & \Theta_{BB} - \Theta_{BC}\Theta_{CC}^{-1}\Theta_{CB}
\end{array}\right]
\end{eqnarray}
and
\begin{eqnarray}\label{sig2min1}
\Sigma_2^{-1} & = &  \Theta_2-\Theta_{2,A}\Theta_{AA}^{-1}\Theta_{A,2}\nonumber\\
&=&
\left[ 
\begin{array}{cc}
\Theta_{BB} - \Theta_{BA}\Theta_{AA}^{-1}\Theta_{AB}, & \Theta_{BC} - \Theta_{BA}\Theta_{AA}^{-1}\Theta_{AC}\\
\Theta_{CB} - \Theta_{CA}\Theta_{AA}^{-1}\Theta_{AB}, & \Theta_{CC} - \Theta_{CA}\Theta_{AA}^{-1}\Theta_{AC}
\end{array}\right].
\end{eqnarray}
Analogous formulas hold for $\tilde\Theta$, but since $\tilde\Theta_{AC}=0$, these formulas reduce to 
\begin{equation}\label{sigminzero}
\Sigma_1^{-1}  =  \left[ 
\begin{array}{cc}
\tilde\Theta_{AA}, & \tilde\Theta_{AB}\\
\tilde\Theta_{BA}, & \tilde\Theta_{BB} - \tilde\Theta_{BC}\tilde\Theta_{CC}^{-1}\tilde\Theta_{CB}
\end{array}\right]
\end{equation}
\begin{equation}\label{sigminzero2}
\Sigma_2^{-1}  =  \left[ 
\begin{array}{cc}
\tilde\Theta_{BB} - \tilde\Theta_{BA}\tilde\Theta_{AA}^{-1}\tilde\Theta_{AB}, & \tilde\Theta_{BC}\\
\tilde\Theta_{CB}, & \tilde\Theta_{CC}
\end{array}\right]
\end{equation}
By solving (\ref{sigminzero}) and (\ref{sigminzero2}) for each block of $\Theta$, we obtain
\begin{equation}\label{thetaMD3x3}
\tilde\Theta = 
\left[\begin{array}{ccc}
\left[\Sigma_1^{-1}\right]_{AA} & \left[\Sigma_1^{-1}\right]_{AB} & 0\vspace{2mm} \\
\left[\Sigma_1^{-1}\right]_{BA} & \tilde\Theta_{BB}  & \left[\Sigma_2^{-1}\right]_{BC}\vspace{2mm}\\
0 & \left[\Sigma_2^{-1}\right]_{CB} & \left[\Sigma_2^{-1}\right]_{CC}
\end{array} \right]
\end{equation}
where
\begin{eqnarray}
 \tilde\Theta_{BB} &=& \left[\Sigma_1^{-1}\right]_{BB}+\left[\Sigma_2^{-1}\right]_{BC}\left[\Sigma_2^{-1}\right]_{CC}^{-1}\left[\Sigma_2^{-1}\right]_{CB}\\
&=& \left[\Sigma_2^{-1}\right]_{BB}+\left[\Sigma_1^{-1}\right]_{BA}\left[\Sigma_1^{-1}\right]_{AA}^{-1}\left[\Sigma_1^{-1}\right]_{AB}
\end{eqnarray}
Rewriting (\ref{thetaMD3x3}) in terms of $\Theta$ components using expressions in (\ref{sig1min1}) and (\ref{sig2min1}) completes the proof.
\end{proof}

\more\more

\begin{proof}[Proof of Lemma~\ref{lemma:linfinityABCbounds} (\madgq{} $\ell_\infty$-distortion bounds ($K=2$))] The first equality is true because $\tilde\Theta_{AC}=0$. By using the closed form expression of $\tilde\Theta$ in Lemma~\ref{lemma:mdbias} and the inequalities in Lemma~\ref{lemma:maxnormineq}, we obtain
\begin{eqnarray*}
\Vert\Theta_{AB}-\tilde\Theta_{AB}\Vert_\infty &=& \Vert\Theta_{AC}\Theta_{CC}^{-1}\Theta_{CB}\Vert_\infty\\
&\le & \min\{{\rm rd}(\Theta_{AC}),{\rm rd}(\Theta_{CC}^{-1})\}{\rm rd}(\Theta_{BC})\Vert\Theta_{AC}\Vert_\infty\Vert\Theta_{BC}\Vert_\infty\Vert\Theta_{CC}^{-1}\Vert_\infty\\
&\le & d_{AC}d_{BC}\gamma_{AC}\gamma_{BC}\theta^{-1}
\end{eqnarray*}
where, by Lemma~\ref{lemma:maxnormineq},  $\Vert\Theta_{AA}^{-1}\Vert_\infty\le \theta^{-1}$ and $\Vert\Theta_{CC}^{-1}\Vert_\infty\le \theta^{-1}$. 
Similarly, we obtain
\begin{eqnarray*}
\Vert\Theta_{BC}-\tilde\Theta_{BC}\Vert_\infty &=&\Vert\Theta_{BA}\Theta_{AA}^{-1}\Theta_{AC}\Vert_\infty~\le~ d_{BA}d_{CA}\gamma_{AB}\gamma_{AC}\theta^{-1}\\
\Vert\Theta_{AA}-\tilde\Theta_{AA}\Vert_\infty &=&\Vert\Theta_{AC}\Theta_{CC}^{-1}\Theta_{CA}\Vert_\infty~\le~ d_{AC}^2\gamma_{AC}^2\theta^{-1}\\
\Vert\Theta_{CC}-\tilde\Theta_{CC}\Vert_\infty &=&\Vert\Theta_{CA}\Theta_{AA}^{-1}\Theta_{AC}\Vert_\infty~\le~ d_{CA}^2\gamma_{AC}^2\theta^{-1}
\end{eqnarray*}
To bound the component $\Vert\Theta_{BB}-\tilde\Theta_{BB}\Vert_\infty$, first let us manipulate the expression of $\Theta_{BB}-\tilde\Theta_{BB}$. Lemma~\ref{lemma:mdbias} provides the expression 
\[\tilde\Theta_{BB}=\Theta_{BB} - \Theta_{BA}\Theta_{AA}^{-1}\Theta_{AB} + \tilde\Theta_{BA}\tilde\Theta_{AA}^{-1}\tilde\Theta_{AB}
\]
so
\begin{eqnarray*}
  \Theta_{BB}-\tilde\Theta_{BB}  &=& \Theta_{BA}\Theta_{AA}^{-1}\Theta_{AB}-\tilde\Theta_{BA}\tilde\Theta_{AA}^{-1}\tilde\Theta_{AB}\\
  &=& \Theta_{BA}\Theta_{AA}^{-1}\Theta_{AB}-(\Theta_{BA} -\Theta_{BC}\Theta_{CC}^{-1}\Theta_{CA})\tilde\Theta_{AA}^{-1}(\Theta_{AB} -\Theta_{AC}\Theta_{CC}^{-1}\Theta_{CB})\\
  &=& \Theta_{BA}\Theta_{AA}^{-1}\Theta_{AB}-\Theta_{BA}\tilde\Theta_{AA}^{-1}\Theta_{AB}-\Theta_{BC}\Theta_{CC}^{-1}\Theta_{CA}\tilde\Theta_{AA}^{-1}\Theta_{AC}\Theta_{CC}^{-1}\Theta_{CB}\\
  & & +\Theta_{BA}\tilde\Theta_{AA}^{-1}\Theta_{AC}\Theta_{CC}^{-1}\Theta_{CB}+\Theta_{BC}
  \Theta_{CC}^{-1}\Theta_{CA}\tilde\Theta_{AA}^{-1}\Theta_{AB}\\
  &=& \Theta_{BA}(\Theta_{AA}^{-1}-\tilde\Theta_{AA}^{-1})\Theta_{AB}-\Theta_{BC}\Theta_{CC}^{-1}\Theta_{CA}\tilde\Theta_{AA}^{-1}\Theta_{AC}\Theta_{CC}^{-1}\Theta_{CB}\\
  & & +\Theta_{BA}\tilde\Theta_{AA}^{-1}\Theta_{AC}\Theta_{CC}^{-1}\Theta_{CB}+\Theta_{BC}
  \Theta_{CC}^{-1}\Theta_{CA}\tilde\Theta_{AA}^{-1}\Theta_{AB}
\end{eqnarray*}
Thus,
\begin{eqnarray*}
    \Vert\Theta_{BB}-\tilde\Theta_{BB}\Vert_\infty &\le & \Vert\Theta_{BA}(\Theta_{AA}^{-1}-\tilde\Theta_{AA}^{-1})\Theta_{AB}\Vert_\infty+\Vert\Theta_{BC}\Theta_{CC}^{-1}\Theta_{CA}\tilde\Theta_{AA}^{-1}\Theta_{AC}\Theta_{CC}^{-1}\Theta_{CB}\Vert_\infty\\
  & & +2\Vert\Theta_{BA}\tilde\Theta_{AA}^{-1}\Theta_{AC}\Theta_{CC}^{-1}\Theta_{CB}\Vert_\infty
\end{eqnarray*}
We now find upper-bounds for each of the three addends above by applying Lemma~\ref{lemma:maxnormineq} multiple times. First,
\[
\Vert\Theta_{BA}(\Theta_{AA}^{-1}-\tilde\Theta_{AA}^{-1})\Theta_{AB}\Vert_\infty ~=~ \Vert\Theta_{BA}\Theta_{AA}^{-1}(\tilde\Theta_{AA}-\Theta_{AA})\tilde\Theta_{AA}^{-1}\Theta_{AB}\Vert_\infty
\]
\begin{eqnarray*}
&\le & {\rm rd}(\Theta_{BA}\Theta_{AA}^{-1}){\rm rd}(\Theta_{BA}\tilde\Theta_{AA}^{-1})\Vert\Theta_{BA}\Theta_{AA}^{-1}\Vert_\infty \Vert \tilde\Theta_{AA}-\Theta_{AA}\Vert_\infty \Vert \tilde\Theta_{AA}^{-1}\Theta_{AB}\Vert_\infty \\
&\le & |A|^2 d_{BA}\gamma_{AB}\theta^{-1}d_{AC}^2\gamma_{AC}^2\theta^{-1}d_{BA}\gamma_{AB}\theta^{-1}\\
&=& |A|^2d_{BA}^2\gamma_{AB}^2d_{AC}^2\gamma_{AC}^2\theta^{-3}
\end{eqnarray*}
Next,
\[
\Vert\Theta_{BC}\Theta_{CC}^{-1}\Theta_{CA}\tilde\Theta_{AA}^{-1}\Theta_{AC}\Theta_{CC}^{-1}\Theta_{CB}\Vert_\infty
\]
\begin{eqnarray*}
&\le & {\rm rd}(\Theta_{BC}\Theta_{CC}^{-1}\Theta_{CA})^2\Vert\Theta_{BC}\Theta_{CC}^{-1}\Theta_{CA}\Vert_\infty^2 \Vert\tilde\Theta_{AA}^{-1}\Vert_\infty\\
&\le & |A|^2 d_{AC}^2d_{BC}^2\gamma_{AC}^2\gamma_{BC}^2\theta^{-3}
\end{eqnarray*}
where we used the bound derived for $\Vert\Theta_{AB}-\tilde\Theta_{AB}\Vert_\infty$, and by Lemma~\ref{lemma:mdbias} and Lemma~\ref{lemma:maxnormineq},
\begin{eqnarray*}
\Vert\tilde\Theta_{AA}^{-1}\Vert_\infty &=& \left\Vert \left[\left(\Sigma_{AB,AB}^{-1}\right)_{AA}\right]^{-1}\right\Vert_\infty\\
&\le& \lambda_{\max}\left(\left[\left(\Sigma_{AB,AB}^{-1}\right)_{AA}\right]^{-1}\right)\\
&=& \left[\lambda_{\min}\left(\left(\Sigma_{AB,AB}^{-1}\right)_{AA}\right)\right]^{-1}\\
&\le& \left[\lambda_{\min}\left(\Sigma_{AB,AB}^{-1}\right)\right]^{-1}\\
&=& \lambda_{\max}\left(\Sigma_{AB,AB}\right)\\
&\le & \lambda_{\max}(\Sigma)\\
&=& \left(\lambda_{\min}(\Theta)\right)^{-1}\\
&=& \theta^{-1}
\end{eqnarray*}
where the last two inequalities follow from Cauchy's Interlace Theorem. 

For the third addend, we have
\begin{eqnarray*}
\Vert\Theta_{BA}\tilde\Theta_{AA}^{-1}\Theta_{AC}\Theta_{CC}^{-1}\Theta_{CB}\Vert_\infty&\le& \min\{{\rm rd}(\Theta_{BA}\tilde\Theta_{AA}^{-1},{\rm rd}(\Theta_{CA})\}{\rm rd}(\Theta_{BC}\Theta_{CC}^{-1})\\
&&\times\Vert\Theta_{BA}\tilde\Theta_{AA}^{-1}\Vert_\infty\gamma_{AC}\Vert\Theta_{BC}\tilde\Theta_{CC}^{-1}\Vert_\infty\\
&\le & |C|d_{BA}d_{BC}d_{CA}\gamma_{AB}\gamma_{AC}\gamma_{BC}\theta^{-2}
\end{eqnarray*}
Therefore
\begin{eqnarray}\label{eq:BBbound1}
    \Vert\Theta_{BB}-\tilde\Theta_{BB}\Vert_\infty&\le & \gamma_{AC}^2|A|^2d_{AC}^2\theta^{-3}(d_{BA}^2\gamma_{AB}^2+d_{BC}^2\gamma_{BC}^2)\\
  \nonumber  && +\gamma_{AC} 2|C|d_{BA}d_{BC}d_{CA}\gamma_{AB}\gamma_{BC}\theta^{-2}
\end{eqnarray}
On the other hand, Lemma~\ref{lemma:mdbias} provides the alternative expression 
\[\tilde\Theta_{BB}=\Theta_{BB} - \Theta_{BC}\Theta_{CC}^{-1}\Theta_{CB} + \tilde\Theta_{BC}\tilde\Theta_{CC}^{-1}\tilde\Theta_{CB}
\]
and doing steps similar to the ones that we used to obtain Equation~(\ref{eq:BBbound1}), we get
\begin{eqnarray}\label{eq:BBbound2}
    \Vert\Theta_{BB}-\tilde\Theta_{BB}\Vert_\infty&\le & \gamma_{AC}^2|C|^2d_{CA}^2\theta^{-3}(d_{BA}^2\gamma_{AB}^2+d_{BC}^2\gamma_{BC}^2)\\
  \nonumber  && +\gamma_{AC} 2|A|d_{BA}d_{BC}d_{AC}\gamma_{AB}\gamma_{BC}\theta^{-2}
\end{eqnarray}
\end{proof}

\end{document}